\documentclass[a4paper,10pt]{amsart}
\textwidth16cm\textheight21cm\oddsidemargin-0.1cm\evensidemargin-0.1cm
\parindent0.0em
\usepackage{amsmath}
\usepackage{amssymb}
\usepackage{amsthm}
\usepackage{enumitem}
\usepackage{latexsym}
\usepackage{mdwlist}

\newtheorem{Def}{Definition}[section]
\newtheorem{Th}[Def]{Theorem}
\newtheorem{Le}[Def]{Lemma}
\newtheorem{Ex}[Def]{Example}
\newtheorem{Rem}[Def]{Remark}
\newtheorem{Co}[Def]{Corollary}
\newtheorem{Pro}[Def]{Proposition}

\begin{document}
\title[On the divisor-class group of monadic submonoids]{On the divisor-class group of monadic submonoids of rings of integer-valued polynomials}
\author{Andreas Reinhart}
\address{Institut f\"ur Mathematik und wissenschaftliches Rechnen, Karl-Franzens-Universit\"at, NAWI Graz, Heinrichstrasse 36, 8010 Graz, Austria}
\email{andreas.reinhart@uni-graz.at}
\subjclass[2010]{13A15, 13F05, 13F15, 20M12}
\keywords{Monadically Krull, Integer-valued polynomial, Divisor-class group}
\begin{abstract} Let $R$ be a factorial domain. In this work we investigate the connections between the arithmetic of ${\rm Int}(R)$ (i.e., the ring of integer-valued polynomials over $R$) and its monadic submonoids (i.e., monoids of the form $\{g\in {\rm Int}(R)\mid g\mid_{{\rm Int}(R)} f^k$ for some $k\in\mathbb{N}_0\}$ for some nonzero $f\in {\rm Int}(R)$). Since every monadic submonoid of ${\rm Int}(R)$ is a Krull monoid it is possible to describe the arithmetic of these monoids in terms of their divisor-class group. We give an explicit description of these divisor-class groups in several situations and provide a few techniques that can be used to determine them. As an application we show that there are strong connections between ${\rm Int}(R)$ and its monadic submonoids. If $R=\mathbb{Z}$ or more generally if $R$ has sufficiently many ``nice'' atoms, then we prove that the infinitude of the elasticity and the tame degree of ${\rm Int}(R)$ can be explained by using the structure of monadic submonoids of ${\rm Int}(R)$.
\end{abstract}
\maketitle

\section{Introduction}
The class of Krull monoids is among the most well-studied classes of monoids in factorization theory (see cite{GHK}). It is known that the behavior of their factorizations only depends on their divisor-class group. On the other hand, there are many examples of atomic, completely integrally closed monoids that fail to be Krull. For instance, it is known that the ring of integer-valued polynomials ${\rm Int}(R)$ over an integral domain $R$ is a Krull domain if and only if $R$ is a Krull domain and ${\rm Int}(R)=R[X]$ (see \cite[Corollary I.3.15]{CC} and \cite[Corollary 2.7]{CGH}). (Note that if ${\rm Int}(R)$ is a Krull domain, then $R$ is a Krull domain, since $R^{\bullet}\subseteq {\rm Int}(R)^{\bullet}$ is a divisor-closed submonoid.) Recently, it was shown that the ring of integer-valued polynomials over a Krull domain satisfies a weaker property which is called monadically Krull \cite{F,R}. A monoid is called monadically Krull if all its divisor-closed submonoids generated by one element (i.e., monadic submonoids) are Krull monoids.

The purpose of this work is to investigate monadic submonoids of rings of integer-valued polynomials over factorial domains. Since these rings are monadically Krull it is possible to study the arithmetic of their monadic submonoids by using their divisor-class groups. The restriction to factorial domains (instead of Krull domains) is reasonable, since we are able to give more precise descriptions in this situation. We pursue two goals. The first goal is a thorough description of divisor-class groups of monadic submonoids of ${\rm Int}(R)$. We achieve this goal for monadic submonoids that are generated by polynomials with coefficients in $R$. The second goal is to show that the elasticity and the tame degree of certain rings of integer-valued polynomials are infinite. We present a proof that relies on the structure of divisor-class groups of monadic submonoids of ${\rm Int}(R)$.

The second goal is motivated by results in the literature that were proved in the recent past. More precisely, it is known that every nonempty finite subset of $\mathbb{N}_{\geq 2}$ is the set of lengths of some $f\in {\rm Int}(\mathbb{Z})^{\bullet}$ (see \cite{FF}). This is a property that ${\rm Int}(\mathbb{Z})$ shares with Krull monoids whose divisor-class group is infinite and where every class contains a height-one prime ideal (see \cite{K}). The question arises whether it is possible to describe this phenomenon in ${\rm Int}(\mathbb{Z})$ by using the theory of Krull monoids. So far, we were not able to solve this problem. Therefore, we want to pursue a simpler goal and prove that the infinitude of certain invariants (i.e., the elasticity and the tame degree) can be derived from the theory of Krull monoids.

\bigskip
In the next section we discuss the notation that is used in this work. We recall the definitions of saturated, divisor-closed, and monadic submonoids of a monoid, and present some of their elementary properties. We briefly discuss a few simple facts about rings of integer-valued polynomials. Another important notion that will be introduced is the image-content $d(f)$ of a nonzero integer-valued polynomial $f$. It is basically a greatest common divisor of the image of $f$ (over the base ring). This notion is of major importance in this work.

The main purpose of the third section is to study the structure of atoms and height-one prime ideals of monadic submonoids of ${\rm Int}(R)$. This is an important prerequisite concerning the investigation of the divisor-class group, since it is possible to describe the structure of the divisor-class group of a Krull monoid by using the $v$-product decompositions of principal ideals (generated by atoms) into height-one prime ideals. We will specifically investigate the subset of constant atoms of a monadic submonoid. We show that every constant atom generates a radical ideal. Moreover, we present a characterization result for monadic submonoids where every constant atom is a prime element. We give a complete description of the set of atoms of monadic submonoids of ${\rm Int}(R)$ that are generated by some $f\in R[X]^{\bullet}$. In what follows we study the set of height-one prime ideals of monadic submonoids generated by some $f\in R[X]^{\bullet}$ that do not contain any constant elements. Finally, we present a result which will enable us to determine the $v$-product decompositions of principal ideals into height-one prime ideals in many situations.

In the fourth section we present the first main result of this work. We show that the divisor-class group of a monadic submonoid of ${\rm Int}(R)$ generated by some $f\in R[X]^{\bullet}$ is torsion-free. Moreover, we present a simple formula to calculate the torsion-free rank in this case. We proceed with a few results that hold in a more general context. In particular, we prove a proposition which relates the $P$-adic exponents of $v$-ideals between a Krull monoid and a saturated submonoid. It is an analogue to a well-known theorem which connects $P$-adic exponents of ideals in a Dedekind domain to a subring that is also a Dedekind domain (see \cite{RR}). Moreover, it will be useful to determine the divisor-class group of monadic submonoids of ${\rm Int}(R)$ which are not covered by the first main theorem. We proceed by describing the set of height-one prime ideals that contain constant elements. These results complement the achievements in Section 3, and have several applications in the last section.

The fifth section is devoted to the construction of ``more involved'' examples of divisor-class groups. We provide basically two sufficient criteria which will enable us to decompose certain divisor-class groups of monadic submonoids into a direct product of divisor-class groups (up to an isomorphism). These criteria will be helpful in last section of this work.

In the last section we provide a few examples and discuss several consequences of the prior sections. Among them are a variety of counterexamples. For instance, it is shown that several characterization results that hold for monadic submonoids generated by some $f\in R[X]^{\bullet}$ no longer hold for arbitrary monadic submonoids. We give non-trivial examples of divisor-class groups that are torsion groups or torsion-free or none of the two. We prove that it is possible to find monadic submonoids of ${\rm Int}(\mathbb{Z})$ whose divisor-class group is torsion-free with prescribed rank. Finally, we present the second main result of this work. It shows that rings of integer-valued polynomials over certain factorial domains have infinite elasticity and tame degree.

\leftmargini25pt
\section{Notation and preliminaries}

All monoids in this work are commutative, cancellative monoids. Let $H$ be a monoid, and $T\subseteq H$ a submonoid. If $x,y\in H$, then we write $x\mid_H y$ if there is some $c\in H$ with $y=cx$.
\begin{itemize}
\item We say that $T\subseteq H$ is \textit{saturated} if for all $x,y\in T$ such that $x\mid_H y$ it follows that $x\mid_T y$.
\item $T\subseteq H$ is called \textit{divisor-closed} if for all $x,y\in H$ with $xy\in T$ we have $x\in T$.
\item If $E\subseteq H$, then let $[\![E]\!]_H$ denote the smallest divisor-closed submonoid of $H$ which contains $E$. If $x\in H$, then set $[\![x]\!]_H=[\![\{x\}]\!]_H$.
\item We say that $T\subseteq H$ is \textit{monadic} if $T=[\![x]\!]_H$ for some $x\in H$.
\end{itemize}

\bigskip
If $E\subseteq H$, then we write $[\![E]\!]$ instead of $[\![E]\!]_H$ if the monoid $H$ is the most obvious choice. Clearly, every monadic submonoid of $H$ is divisor-closed, and every divisor-closed submonoid of $H$ is saturated. Observe that if $x\in H$, then $[\![x]\!]=\{y\in H\mid y\mid_H x^k$ for some $k\in\mathbb{N}_0\}$. A subset $I\subseteq H$ is called an $s$-ideal of $H$ if $IH=I$. Let ${\rm spec}(H)$ be the set of all prime $s$-ideals of $H$. An $s$-ideal is called radical if it is an intersection of prime $s$-ideals. By $\mathfrak{X}(H)$ we denote the set of all height-one prime ($s$-)ideals of $H$, i.e., the set of all minimal nonempty prime $s$-ideals of $H$. By $H^{\times}$ resp. $\mathcal{A}(H)$ we denote the set of units of $H$ resp. the set of atoms of $H$. We say that $H$ is reduced if $H^{\times}=\{1\}$. If $x,y\in H$, then we say that $x$ and $y$ are associated (we denote this by $x\simeq_H y$) if $x=y\varepsilon$ for some $\varepsilon\in H^{\times}$. It is well-known that $\simeq_H$ defines an equivalence relation on $H$. By $H_{{\rm red}}=\{xH^{\times}\mid x\in H\}$ we denote the set of equivalence classes of $\simeq_H$. This set forms a monoid under the canonical multiplication. If $E\subseteq H$, then $F\subseteq E$ is called a system of representatives of $E$ if for every $x\in E$ there is a unique $y\in F$ such that $x\simeq_H y$. Let $L$ be a quotient monoid of $H$. For $X\subseteq L$, set $X^{-1}=\{z\in L\mid zX\subseteq H\}$ and $X_v=(X^{-1})^{-1}$. A subset $I\subseteq H$ is called a divisorial ideal (or $v$-ideal) of $H$ if $I_v=I$. Every divisorial ideal of $H$ is an $s$-ideal of $H$. Let $\mathcal{I}_v(H)$ denote the set of divisorial ideals of $H$. By $\mathcal{C}_v(H)$ we denote the divisor-class group (or $v$-class group) of $H$. It measures how far ($v$-invertible) $v$-ideals are from being principal ideals. A precise definition can be found in \cite[Definition 2.1.8]{GHK}. If $I\in\mathcal{I}_v(H)$, then let $[I]$ denote the class of $I$ in $\mathcal{C}_v(H)$. Note that $H$ is called a \textit{Krull monoid} if $H$ is a completely integrally closed Mori monoid (or equivalently, every $v$-ideal of $H$ is a finite $v$-product of height-one prime ideals of $H$). For a thorough introduction to Krull monoids we refer to \cite[Definition 2.3.1]{GHK}. We say that $H$ is \textit{monadically Krull} if $[\![x]\!]$ is a Krull monoid for every $x\in H$. Most of these notions can be defined analogously in the context of integral domains.

We want to recapitulate a few basic facts concerning saturated and divisor-closed submonoids of $H$. First let $T\subseteq H$ be saturated. Then $H^{\times}\cap T=T^{\times}$. If $H$ is a Krull monoid, then $T$ is a Krull monoid. Now let $T\subseteq H$ be divisor-closed. Then $T^{\times}=H^{\times}$, and $\mathcal{A}(T)=\mathcal{A}(H)\cap T$.

If $M$ is a set and $l\in\mathbb{N}$, then a finite sequence $(a_i)_{i=1}^l\in M^l$, will be denoted by $\underline{a}$ (i.e., $\underline{a}=(a_i)_{i=1}^l$).

Recall that if $R$ is an integral domain with quotient field $K$ and $X$ is an indeterminate over $K$, then ${\rm Int}(R)=\{f\in K[X]\mid f(x)\in R$ for all $x\in R\}$ is called the \textit{ring of integer-valued polynomials over $R$}. It is well-known that ${\rm Int}(R)^{\times}=R[X]^{\times}=R^{\times}$. Note that $R^{\bullet}=R\setminus\{0\}$ forms a monoid under multiplication. If we refer to a submonoid of $R$, then we always mean a submonoid of $R^{\bullet}$. Especially, if $E\subseteq R^{\bullet}$, then let $[\![E]\!]_R=[\![E]\!]_{R^{\bullet}}$. We say that $R$ is monadically Krull if $R^{\bullet}$ is monadically Krull.

\bigskip
Now let $R$ be a factorial domain, $K$ a field of quotients of $R$, $X$ an indeterminate over $K$, and $Q$ a system of representatives of $\mathcal{A}(R)$. For $T\subseteq R$, let ${\rm{GCD}}_R(T)$ be the set of all greatest common divisors of $T$ (in $R$), and let ${\rm{LCM}}_R(T)$ be the set of all least common multiples of $T$ (in $R$). If $f\in R[X]^{\bullet}$, then we say that $f$ is primitive if every greatest common divisors of all coefficients of $f$ is a unit of $R$. For convenience we also allow the units of $R$ to be primitive polynomials. If $q\in Q$, then let $\mathrm{v}_q:R\rightarrow\mathbb{N}_0\cup\{\infty\}$ denote the $q$-adic valuation on $R$. Let $d_Q:{\rm{Int}}(R)^{\bullet}\rightarrow R^{\bullet}$ be defined by $d_Q(g)=\prod_{p\in Q} p^{\min\{\mathrm{v}_p(g(c))\mid c\in R\}}$ for all $g\in {\rm{Int}}(R)^{\bullet}$. Set $d=d_Q$. Note that $d(g)\in {\rm{GCD}}_R(\{g(c)\mid c\in R\})$ and $\frac{g}{d(g)}\in {\rm{Int}}(R)$ for all $g\in {\rm{Int}}(R)^{\bullet}$.
It is straightforward to show that $d(f^k)=d(f)^k$ and $d(af)\simeq_R ad(f)$ for all $f\in {\rm Int}(R)^{\bullet}$, $k\in\mathbb{N}_0$, and $a\in R^{\bullet}$. Let $n\in\mathbb{N}$, $\underline{f}\in ({\rm{Int}}(R)^{\bullet})^n$ and $\underline{x}\in\mathbb{N}_0^n\backslash\{\underline{0}\}$. We say that $\underline{x}$ is \textit{$\underline{f}$-irreducible} if for all $\underline{y},\underline{z}\in\mathbb{N}_0^n$ such that $\underline{x}=\underline{y}+\underline{z}$ and $d(\prod_{i=1}^n f_i^{x_i})=d(\prod_{i=1}^n f_i^{y_i})d(\prod_{i=1}^n f_i^{z_i})$ it follows that $\underline{y}=\underline{0}$ or $\underline{z}=\underline{0}$. It is well-known (see \cite[Theorem 5.2]{R}) that ${\rm Int}(R)$ is monadically Krull. If $f\in {\rm Int}(R)^{\bullet}$, then we can deduce by \cite[Theorem 3.6]{R} and its proof that $\mathfrak{X}([\![f]\!])$, ${\rm spec}(H)$, and $\{u[\![f]\!]\mid u\in\mathcal{A}([\![f]\!])\}$ are finite sets. The remarks in this section will be used without citation.

\section{Atoms and height-one prime ideals}

In this section we present a few basic preparatory results about atoms and height-one prime ideals of monadic submonoids of ${\rm Int}(R)$. Many of the results in this section refer to monadic submonoids generated by some ``$f\in R[X]^{\bullet}$''. Note that this is a rather natural assumption because it is straightforward to prove that every monadic submonoid of ${\rm Int}(R)$ is contained in some monadic submonoid of ${\rm Int}(R)$ generated by some $f\in R[X]^{\bullet}$. (If $g\in {\rm Int}(R)^{\bullet}$ and $b\in R^{\bullet}$ are such that $bg\in R[X]$, then $[\![g]\!]\subseteq [\![bg]\!]$.) The purpose of the first result is to describe the set of ``constant atoms'' of monadic submonoids of ${\rm Int}(R)$. In particular, we show that the principal ideals generated by constant atoms are radical ideals. Furthermore, we prove that a height-one prime ideal contains at most one constant atom (up to associates).

\begin{Le}\label{general} Let $R$ be a factorial domain, and $f\in {\rm Int}(R)^{\bullet}$.
\begin{itemize}
\item[\textnormal{\textbf{1.}}] If $g\in [\![f]\!]$ and $u\in R$, then $u\mid_{[\![f]\!]} g$ if and only if $u\mid_R d(g)$.
\item[\textnormal{\textbf{2.}}] $\mathcal{A}([\![f]\!])\cap R=[\![f]\!]\cap\mathcal{A}(R)=\{u\in\mathcal{A}(R)\mid u\mid_R d(f)\}$.
\item[\textnormal{\textbf{3.}}] If $u\in\mathcal{A}([\![f]\!])\cap R$, then $uH$ is a radical ideal of $[\![f]\!]$.
\item[\textnormal{\textbf{4.}}] If $P\in\mathfrak{X}([\![f]\!])$ and $u,w\in P\cap\mathcal{A}(R)$, then $u\simeq_{[\![f]\!]} w$.
\end{itemize}
\end{Le}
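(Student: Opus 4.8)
The plan is to push everything through the image-content map $d$, relying only on the rules $d(ah)\simeq_R a\,d(h)$ and $d(h^k)=d(h)^k$ and on the elementary fact that, for $h\in{\rm Int}(R)^{\bullet}$ and $u\in R^{\bullet}$, one has $u\mid_{{\rm Int}(R)}h$ if and only if $u\mid_R d(h)$. Write $H=[\![f]\!]$. First I would prove part 1: if $u\mid_H g$, say $g=uh$ with $h\in H$, then $d(g)\simeq_R u\,d(h)$, so $u\mid_R d(g)$; conversely, if $u\mid_R d(g)$, write $d(g)=ua$ with $a\in R^{\bullet}$, note that $g=d(g)\cdot\tfrac{g}{d(g)}=u\cdot(a\cdot\tfrac{g}{d(g)})$ with $a\cdot\tfrac{g}{d(g)}\in{\rm Int}(R)$, and verify $a\cdot\tfrac{g}{d(g)}\in H$ by choosing $k\in\mathbb{N}_0$ and $w\in{\rm Int}(R)$ with $f^k=gw$ and observing $f^k=(a\cdot\tfrac{g}{d(g)})(uw)$; thus $u\mid_H g$.

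Parts 2 and 3 would then follow quickly from part 1. Since $H$ is divisor-closed in ${\rm Int}(R)^{\bullet}$ we have $\mathcal{A}(H)=\mathcal{A}({\rm Int}(R))\cap H$, and a constant $u\in R^{\bullet}$ is an atom of ${\rm Int}(R)$ exactly when it is an atom of $R$ (a factorization $u=gh$ in ${\rm Int}(R)$ forces $g,h\in{\rm Int}(R)\cap K=R$ for degree reasons), which already gives $\mathcal{A}(H)\cap R=H\cap\mathcal{A}(R)$; moreover, for $u\in\mathcal{A}(R)$, membership $u\in H$ means $u\mid_R d(f)^k$ for some $k$, hence $u\mid_R d(f)$ because $u$ is prime in $R$. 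For part 3, given $u\in\mathcal{A}(H)\cap R$ and $g\in H$ with $g^n\in uH$, part 1 yields $u\mid_R d(g^n)=d(g)^n$, so $u\mid_R d(g)$ by primality of $u$, so $u\mid_H g$ by part 1 again, i.e.\ $g\in uH$; thus $uH$ coincides with its radical and is a radical ideal.

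For part 4, suppose $P\in\mathfrak{X}(H)$ and $u,w\in P\cap\mathcal{A}(R)$, and assume toward a contradiction that $u\not\simeq_{[\![f]\!]}w$; since $u,w\in R$ this means $u\not\simeq_R w$, so $u$ and $w$ are non-associate primes of $R$ and hence $\gcd_R(u,w)=1$. By part 2, $u\mid_R d(f)$ and $w\mid_R d(f)$, so $uw\mid_R d(f)$ and $uw\in H$. The crux is to show $uH\cap wH=uwH$: ``$\supseteq$'' is clear, and if $g\in uH\cap wH$ then part 1 gives $u\mid_R d(g)$ and $w\mid_R d(g)$, so $uw\mid_R d(g)$ by coprimality, so $uw\mid_H g$ by part 1, i.e.\ $g\in uwH$. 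Then I would use that $H$ is a Krull monoid, so its $v$-ideals factor uniquely into $v$-products of height-one primes: comparing $P$-adic exponents in $uH\cap wH=uwH$, the left side has exponent $\max\{\mathrm{v}_P(u),\mathrm{v}_P(w)\}$ and the right side $\mathrm{v}_P(u)+\mathrm{v}_P(w)$, and since $u,w\in P$ force $\mathrm{v}_P(u),\mathrm{v}_P(w)\ge 1$ this is absurd. Hence $u\simeq_{[\![f]\!]}w$.

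I expect the one real obstacle to be the identity $uH\cap wH=uwH$ in part 4, together with the recognition that it is precisely what delivers the claim: non-associate constant atoms are coprime in $R$, and coprimality makes the content map multiplicative enough that two such constants cannot lie in a common height-one prime of the Krull monoid $H$. Parts 1--3 are essentially mechanical once the content-map translation in part 1 is in place, and the endgame of part 4 is a one-line exponent comparison once the identity is available.
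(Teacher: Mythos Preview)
Your proof is correct. Parts 1--3 follow essentially the same line as the paper's argument; the only cosmetic difference is that for the converse in part 1 the paper simply notes $u\mid_{{\rm Int}(R)} g$ and invokes divisor-closedness of $H$, whereas you write out the cofactor explicitly.

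Part 4, however, is a genuinely different route. The paper argues pointwise: for $h$ in the quotient group with $uh,wh\in H$, it checks $h(z)\in R$ for every $z\in R$ using that $u,w$ are non-associate primes of $R$, concludes $h\in{\rm Int}(R)\cap L=H$, hence $\{u,w\}^{-1}=H$ and $\{u,w\}_v=H$, contradicting $\{u,w\}\subseteq P$. You instead stay entirely inside the monoid: part 1 and coprimality in $R$ give $uH\cap wH=uwH$ directly, and then a one-line comparison of $\mathrm{v}_P$-exponents ($\max$ versus sum) finishes. Your approach has the advantage of reusing part 1 rather than introducing a separate pointwise computation, and it makes the underlying $v$-coprimality of $u$ and $w$ transparent through the lcm/product identity. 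The paper's approach, on the other hand, avoids appealing to the Krull valuation machinery and yields the stronger statement $\{u,w\}_v=H$ by a bare-hands calculation. Both are short and natural; yours is arguably the cleaner packaging once part 1 is available.
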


\begin{proof} $\textbf{1.}$ ``$\Rightarrow$'': Let $u\mid_{[\![f]\!]} g$. There is some $v\in [\![f]\!]$ such that $g=uv$. Since $u\in R$ we infer that $d(g)\simeq_R ud(v)$, and thus $u\mid_R d(g)$. ``$\Leftarrow$'': Let $u\mid_R d(g)$. We have $d(g)\mid_{{\rm Int}(R)} g$, and thus $u\mid_{{\rm Int}(R)} g$. Since $g\in [\![f]\!]$ it follows that $u\mid_{[\![f]\!]} g$.

$\textbf{2.}$ First we show that $\mathcal{A}([\![f]\!])\cap R=[\![f]\!]\cap\mathcal{A}(R)$. ``$\subseteq$'': Let $u\in\mathcal{A}([\![f]\!])\cap R$. Observe that $u\not\in [\![f]\!]^{\times}=R^{\times}$. Let $x,y\in R$ be such that $u=xy$. It is clear that $x,y\in [\![f]\!]$. Therefore, $x\in [\![f]\!]^{\times}=R^{\times}$ or $y\in [\![f]\!]^{\times}=R^{\times}$. Consequently, $u\in [\![f]\!]\cap\mathcal{A}(R)$. ``$\supseteq$'': Let $u\in [\![f]\!]\cap\mathcal{A}(R)$. We have $u\not\in R^{\times}=[\![f]\!]^{\times}$. Let $x,y\in [\![f]\!]$ be such that $u=xy$. Observe that $0=\deg(u)=\deg(x)+\deg(y)$, and thus $x,y\in R$. Therefore, $x\in R^{\times}=[\![f]\!]^{\times}$ or $y\in R^{\times}=[\![f]\!]^{\times}$. We infer that $u\in\mathcal{A}([\![f]\!])\cap R$.

Next we show that $[\![f]\!]\cap\mathcal{A}(R)=\{u\in\mathcal{A}(R)\mid u\mid_R d(f)\}$. ``$\subseteq$'': Let $u\in [\![f]\!]\cap\mathcal{A}(R)$. Then $u\mid_{{\rm Int}(R)} f^k$ for some $k\in\mathbb{N}$. Consequently, $u\mid_{[\![f]\!]} f^k$. It follows by 1 that $u\mid_R d(f^k)=d(f)^k$. This implies that $u\mid_R d(f)$. ``$\supseteq$'': Let $u\in\mathcal{A}(R)$ be such that $u\mid_R d(f)$. By 1 we have $u\mid_{[\![f]\!]} f$. Therefore, $u\in [\![f]\!]\cap\mathcal{A}(R)$.

$\textbf{3.}$ Let $u\in\mathcal{A}([\![f]\!])\cap R$. Let $g\in [\![f]\!]$ and $n\in\mathbb{N}$ be such that $u\mid_{[\![f]\!]} g^n$. Then $u\mid_R d(g^n)=d(g)^n$ by 1, and thus $u\mid_R d(g)$. It follows by 1 that $u\mid_{[\![f]\!]} g$. Therefore, $uH$ is a radical ideal of $[\![f]\!]$.

$\textbf{4.}$ Assume to the contrary that there are $P\in\mathfrak{X}([\![f]\!])$ and $u,w\in P\cap\mathcal{A}(R)$ such that $u\not\simeq_{[\![f]\!]} w$. Let $K$ be a quotient field of $R$, $X$ an indeterminate over $K$ and $L$ a quotient group of $[\![f]\!]$. Let $h\in L$ be such that $uh,wh\in [\![f]\!]$. We have $h\in K[X]$, $h\in uh,wh\in {\rm Int}(R)$, and thus $uh(z),wh(z)\in R$ for all $z\in R$. Observe that $u\not\simeq_R w$. If $z\in R$, then $u\mid_R uwh(z)=wuh(z)$, and thus $u\mid_R uh(z)$, hence $h(z)\in R$. Therefore, $h\in {\rm Int}(R)\cap L=[\![f]\!]$. We infer that $u^{-1}[\![f]\!]\cap w^{-1}[\![f]\!]=[\![f]\!]$. Consequently, $[\![f]\!]=\{u,w\}_{v_{[\![f]\!]}}\subseteq P$, a contradiction.
\end{proof}

Let $R$ be a factorial domain, and $f\in {\rm Int}(R)^{\bullet}$. Then $\mathcal{A}([\![f]\!])\cap R$ is called the set of constant atoms of $[\![f]\!]$. Next, we characterize when every constant atom is a prime element.

\begin{Pro}\label{chards} Let $R$ be a factorial domain, and $f\in {\rm Int}(R)^{\bullet}$. The following are equivalent:
\begin{itemize}
\item[\textnormal{\textbf{1.}}] Every $P\in\mathfrak{X}([\![f]\!])$ such that $P\cap R\not=\emptyset$ is principal.
\item[\textnormal{\textbf{2.}}] For every $P\in\mathfrak{X}([\![f]\!])$ such that $P\cap R\not=\emptyset$ there is some $n\in\mathbb{N}$ such that $(P^n)_v$ is principal.
\item[\textnormal{\textbf{3.}}] Every constant atom of $[\![f]\!]$ is a prime element.
\item[\textnormal{\textbf{4.}}] $d(gh)=d(g)d(h)$ for all $g,h\in [\![f]\!]$.
\end{itemize}
If $\mathcal{C}_v([\![f]\!])$ is finite, then these conditions are satisfied.
\end{Pro}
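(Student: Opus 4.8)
The plan is to establish the cycle of implications $(1)\Rightarrow(2)\Rightarrow(3)\Rightarrow(4)\Rightarrow(1)$, writing $H=[\![f]\!]$ throughout and using freely that $H$ is a Krull monoid --- so that every $v$-ideal of $H$ is a unique finite $v$-product of height-one primes --- and that, by Lemma~\ref{general}, the constant atoms of $H$ are precisely the elements of $H\cap\mathcal{A}(R)$, each of which generates a radical ideal, while $u\mid_H g\Leftrightarrow u\mid_R d(g)$ for $u\in R$ and $g\in H$. Two parts are immediate: $(1)\Rightarrow(2)$ holds with $n=1$, and the concluding assertion holds because if $\mathcal{C}_v(H)$ is finite then each class $[P]$ has finite order, so $(P^n)_v$ is principal for a suitable $n$.

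For $(2)\Rightarrow(3)$ I would argue by contradiction. Let $u$ be a constant atom; since $uH$ is a radical ideal and $u\notin H^{\times}$, we may write $uH=(P_1\cdots P_k)_v$ with $P_1,\dots,P_k$ pairwise distinct height-one primes, each containing $u$ (hence each meeting $R$) and $k\geq 1$, and the goal is $k=1$. Suppose $k\geq 2$. By (2) there is an $N\in\mathbb{N}$ such that $(P_i^N)_v$ is principal for every $i$, say $(P_i^N)_v=b_iH$ with $b_i\in H$. From $uH=(P_1\cdots P_k)_v$, comparing the exponents of height-one primes gives $u^NH=(P_1^N\cdots P_k^N)_v=(b_1\cdots b_k)H$, so $u^N\simeq_H b_1\cdots b_k$. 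A degree count then forces each $b_i$ to be constant, i.e.\ $b_i\in R^{\bullet}$, and since $R$ is factorial and $u$ is a prime of $R$, the relation $u^N\simeq_R b_1\cdots b_k$ gives $b_i\simeq_R u^{m_i}$ with $m_i\in\mathbb{N}_0$ and $\sum_i m_i=N$. Then $(P_i^N)_v=u^{m_i}H=(P_1^{m_i}\cdots P_k^{m_i})_v$, so reading off the exponent of some $P_j$ with $j\neq i$ (which exists as $k\geq 2$) gives $m_i=0$, whereas the exponent of $P_i$ gives $m_i=N\geq 1$ --- a contradiction. Hence $k=1$, $uH=P_1$ is prime, and $u$ is a prime element.

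For $(3)\Rightarrow(4)$, I would note that $d(g)d(h)\mid_R d(gh)$ always holds since $\frac{gh}{d(g)d(h)}=\frac{g}{d(g)}\cdot\frac{h}{d(h)}\in{\rm Int}(R)$, and prove the reverse divisibility one prime at a time: for $q\in Q$ with $q\mid_R d(gh)$ we have $q\mid_{{\rm Int}(R)}gh\in H$, so $q\in H\cap\mathcal{A}(R)$ is a constant atom, hence a prime of $H$ by (3); by Lemma~\ref{general} the function $x\mapsto\mathrm{v}_q(d(x))$ on $H$ coincides with $\max\{m\in\mathbb{N}_0\mid q^m\mid_H x\}$, which is additive because $q$ is a prime of $H$, so $\mathrm{v}_q(d(gh))=\mathrm{v}_q(d(g))+\mathrm{v}_q(d(h))$; for $q\in Q$ with $q\nmid_R d(gh)$ the corresponding inequality is trivial. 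This yields $d(gh)\mid_R d(g)d(h)$, and since $d=d_Q$ returns products of elements of $Q$ we get $d(gh)=d(g)d(h)$. For $(4)\Rightarrow(1)$ I would first observe that (4) implies (3): if $u$ is a constant atom with $u\mid_H gh$, then $u\mid_R d(gh)=d(g)d(h)$ forces $u\mid_R d(g)$ or $u\mid_R d(h)$, i.e.\ $u\mid_H g$ or $u\mid_H h$. Then, given $P\in\mathfrak{X}(H)$ with $P\cap R\neq\emptyset$, choose $a\in P\cap R^{\bullet}$ and factor it into atoms of $R$; since $P$ is a proper prime $s$-ideal and $H$ is divisor-closed in ${\rm Int}(R)$, one of these atoms $v$ lies in $P\cap\mathcal{A}(R)$, so $v$ is a constant atom and, by (3), a prime of $H$; thus $vH$ is a nonempty proper prime $s$-ideal of $H$ contained in $P$, and minimality of $P$ among nonempty prime $s$-ideals forces $P=vH$.

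The step I expect to be the main obstacle is $(2)\Rightarrow(3)$: the hypothesis that merely \emph{some} power of each relevant height-one prime is principal is a priori weak, and the point is to turn it into rigidity by multiplying up to the constant atom $u$, exploiting that constant elements of $H$ are recognizable by degree and that $R$ is factorial in order to pin the auxiliary elements $b_i$ down as powers of the single prime $u$ --- which is incompatible with $u$ lying in two distinct height-one primes.
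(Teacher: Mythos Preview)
Your proof is correct. The overall logical structure matches the paper's (your cycle $1\Rightarrow 2\Rightarrow 3\Rightarrow 4\Rightarrow 1$ effectively reproduces the paper's $1\Rightarrow 2\Rightarrow 3\Rightarrow 1$ together with $3\Leftrightarrow 4$, since your $4\Rightarrow 1$ passes through~3), and your treatments of $3\Rightarrow 4$ and $4\Rightarrow 3$ are essentially the paper's arguments in a different dress.

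The one genuine difference is $(2)\Rightarrow(3)$. The paper does not decompose $uH$ globally; instead it fixes a single $P\in\mathfrak{X}(H)$ with $u\in P$, uses (2) to write $P=\sqrt{xH}$ for some $x\in H$, observes that $u\in\sqrt{xH}$ forces $x\mid_H u^k$ for some $k$, and then a degree count plus factoriality of $R$ gives $x\simeq_H u^l$, whence $P=\sqrt{u^lH}=\sqrt{uH}=uH$ by the radicality of $uH$. Your route---writing $uH=(P_1\cdots P_k)_v$, raising to a common $N$th power so that each $(P_i^N)_v=b_iH$ becomes principal, forcing $b_i\in R$ by degree, and deriving a contradiction from exponent comparison when $k\geq 2$---is a legitimate alternative that leans more on the unique $v$-factorization in the Krull monoid. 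The paper's argument is a bit shorter and avoids the common exponent $N$ and the case split; your argument, on the other hand, makes the obstruction to $k\geq 2$ completely explicit at the level of height-one primes and would transplant cleanly to settings where one already has the full prime decomposition of $uH$ in hand.
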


\begin{proof} $\textbf{1.}\Rightarrow\textbf{2.}$: Trivial.

$\textbf{2.}\Rightarrow\textbf{3.}$: Let $u$ be a constant atom of $[\![f]\!]$. Since $[\![f]\!]$ is a Krull monoid there is some $P\in\mathfrak{X}([\![f]\!])$ such that $u\in P$. Some $v$-power of $P$ is principal, and thus there is some $x\in [\![f]\!]$ such that $P=\sqrt{x[\![f]\!]}$. There is some $k\in\mathbb{N}$ such that $x\mid_{[\![f]\!]} u^k$. Therefore, $x\in R$ and $x\mid_R u^k$. Since $R$ is factorial this implies that $x\simeq_R u^l$ for some $l\in\mathbb{N}$. Consequently, $x\simeq_{[\![f]\!]} u^l$. It follows that $P=\sqrt{u^l[\![f]\!]}=\sqrt{u[\![f]\!]}=u[\![f]\!]$, hence $u$ is a prime element of $[\![f]\!]$.

$\textbf{3.}\Rightarrow\textbf{1.}$: Let $P\in\mathfrak{X}([\![f]\!])$ be such that $P\cap R\not=\emptyset$. There is some $x\in P\cap R$ and some $u\in\mathcal{A}(R)$ such that $u\mid_R x$ and $u\in P$. Observe that $u$ is a constant atom of $[\![f]\!]$, hence $u$ is a prime element of $[\![f]\!]$. Therefore, $P=u[\![f]\!]$.

$\textbf{3.}\Rightarrow\textbf{4.}$: Let $g,h\in [\![f]\!]$. It is sufficient to show that $d(\frac{gh}{d(g)d(h)})=1$. Assume to the contrary that $d(\frac{gh}{d(g)d(h)})\not=1$. Then there is some $p\in\mathcal{A}(R)$ such that $p\mid_R d(\frac{gh}{d(g)d(h)})$. Obviously, $p$ is a constant atom of $[\![f]\!]$ and $p\mid_{[\![f]\!]}\frac{g}{d(g)}\frac{h}{d(h)}$. Therefore, $p\mid_{[\![f]\!]}\frac{g}{d(g)}$ or $p\mid_{[\![f]\!]}\frac{h}{d(h)}$. This implies that $p\mid_R d(\frac{g}{d(g)})=1$ or $p\mid_R d(\frac{h}{d(h)})=1$, a contradiction.

$\textbf{4.}\Rightarrow\textbf{3.}$: Let $u$ be a constant atom of $[\![f]\!]$ and $g,h\in [\![f]\!]$ such that $u\mid_{[\![f]\!]} gh$. Then $u\in\mathcal{A}(R)$ and $u\mid_R d(gh)=d(g)d(h)$. We infer that $u\mid_R d(g)$ or $u\mid_R d(h)$. Consequently, $u\mid_{[\![f]\!]} g$ or $u\mid_{[\![f]\!]} h$.

If $\mathcal{C}_v([\![f]\!])$ is finite, then for every $P\in\mathfrak{X}([\![f]\!])$ there is some $n\in\mathbb{N}$ such that $(P^n)_v$ is principal, hence 2 is satisfied.
\end{proof}

Now we show that elements of monadic submonoids that are generated by some $f\in R[X]^{\bullet}$ can be represented in form of special fractions. As a consequence, we provide a simple set of generators of the quotient group of $[\![f]\!]$. This type of representability will turn out to be a crucial ingredient for our first main result in Section 4.

\begin{Le}\label{represent} Let $R$ be a factorial domain, $K$ a quotient field of $R$, $X$ an indeterminate over $K$ and $f\in R[X]^{\bullet}$. For every $g\in [\![f]\!]$ there are some $a,b\in [\![f]\!]\cap R$ and some primitive $h\in [\![f]\!]\cap R[X]$ such that $h\mid_{R[X]} f^k$ for some $k\in\mathbb{N}$, ${\rm GCD}_R(a,b)=R^{\times}$ and $g=\frac{bh}{a}$.
\end{Le}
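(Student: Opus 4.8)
The plan is to start from an arbitrary $g \in [\![f]\!]$ and unwind the divisibility relation $g \mid_{{\rm Int}(R)} f^k$ into an explicit fraction. Since $g \in [\![f]\!]$, there is some $k \in \mathbb{N}_0$ and some $c \in {\rm Int}(R)$ with $f^k = cg$; we may assume $k \geq 1$ (the case $g \in R^\times$ is trivial, take $a = b = h = 1$). Working inside $K[X]$, we then have $g = f^k / c$. The first step is to clear denominators: choose $b_0 \in R^\bullet$ minimal (i.e. with $b_0 c \in R[X]$) and likewise pass to the ``primitive part''. Concretely, write $g$ in lowest terms in $K[X]$ as $g = (b/a) h$ where $h \in R[X]$ is primitive, $a, b \in R^\bullet$ and ${\rm GCD}_R(a,b) = R^\times$; this is just the statement that $K[X]$ is the quotient ring of the factorial domain $R[X]$, so every element of $K[X]$ is a unit of $R$ times a primitive polynomial over $R$, and then I separate the numerator and denominator of that unit. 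What remains is to verify the four membership/divisibility claims: $a,b \in [\![f]\!]$, $h \in [\![f]\!]$, and $h \mid_{R[X]} f^k$.

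For the claim $h \mid_{R[X]} f^k$: from $g = (b/a)h$ and $ag = bh$, combined with $f^k = cg$, we get $a f^k = a c g = c (a g) = c b h$, so $af^k = (cb) h$ in $K[X]$; since $af^k \in R[X]$ and $h \in R[X]$ is primitive, Gauss's lemma forces $cb \in R[X]$, hence $h \mid_{R[X]} a f^k$. Now I must divide out the constant $a$: since $h$ is primitive and $h$ divides $a f^k$ in $R[X]$, and $R[X]$ is a UFD, the prime factors of $a$ (which are prime in $R[X]$) cannot divide $h$, so they must all divide $f^k / (\text{stuff})$... more carefully, writing $af^k = (cb)h$ and comparing factorizations in the UFD $R[X]$: the constant $a$ divides $cb$ (because $h$ being primitive contributes no constant factors), so $c b = a e$ for some $e \in R[X]$, giving $f^k = e h$, i.e. $h \mid_{R[X]} f^k$. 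This also shows $h \mid_{{\rm Int}(R)} f^k$, so $h \in [\![f]\!]$.

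For $a, b \in [\![f]\!]$: here I would use Lemma \ref{general}(1), which says that for $u \in R$ and an element of $[\![f]\!]$, divisibility in $[\![f]\!]$ is detected by divisibility of the image-content in $R$. Since $g = bh/a \in {\rm Int}(R)$ and $g \in [\![f]\!]$, I can compute $d(g)$: using $d(ag) \simeq_R a\, d(g)$ and $ag = bh$, I get $a\, d(g) \simeq_R d(bh) \simeq_R b\, d(h)$, and since $h$ is primitive one expects $d(h) \mid_R$ (image-content contributions only from... ) — actually $d(h)$ need not be $1$, but $d(h) \mid_{R} d(f)^{k}$ because $h \mid_{R[X]} f^k$ hence $d(h) \mid_R d(f^k) = d(f)^k$. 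From $a\, d(g) \simeq_R b\, d(h)$ and ${\rm GCD}_R(a,b) = R^\times$ we get $a \mid_R d(h) \mid_R d(f)^k$, so every prime factor of $a$ divides $d(f)$, whence by Lemma \ref{general}(1) applied repeatedly (or directly, since $a \mid_R d(f^k) = d(f^k)$ and $d(f^k) \mid_{{\rm Int}(R)} f^k$) we get $a \mid_{[\![f]\!]} f^k$, so $a \in [\![f]\!]$. Since $b \mid_R a\, d(g)$ is automatic and $b h = a g$ with $g \in [\![f]\!]$... for $b$ I argue $b \mid_{R[X]} a g$; as $g \in {\rm Int}(R)$ we have $b \mid_{{\rm Int}(R)} a g = b h$ trivially, but I need $b \mid_{{\rm Int}(R)} f^k$. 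Use $b\, d(h) \simeq_R a\, d(g)$, so $b \mid_R a\, d(g)$, and $a d(g) = d(ag) = d(bh)$; iterating, $a g \in [\![f]\!]$ (as $a, g \in [\![f]\!]$), so $bh = ag \in [\![f]\!]$, and since $[\![f]\!]$ is divisor-closed and $b \mid_{{\rm Int}(R)} bh$... no — I need $b \mid_{{\rm Int}(R)} (bh)$ as integer-valued polynomials, which holds with quotient $h \in {\rm Int}(R)$; then $b \mid_{[\![f]\!]} bh$ since $bh \in [\![f]\!]$ and $[\![f]\!]$ divisor-closed gives $b \in [\![f]\!]$.

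The main obstacle I anticipate is the bookkeeping around the constant $a$: making sure that the factorization $g = (b/a)h$ can be chosen with $a, b$ genuinely in $[\![f]\!]$ (not merely in $R$) and with the primitivity and coprimality conditions simultaneously. The cleanest route is probably to first produce the ``raw'' fraction $g = f^k/c$, then let $a$ be (a representative of) the content-denominator and $b/a$ the content of $g$ as an element of $K[X]$, immediately reduce the fraction $b/a$ to lowest terms, extract the primitive $h$, and only then verify the three divisibility statements using Lemma \ref{general}(1) and Gauss's lemma in the UFD $R[X]$ as sketched above. Everything else is routine manipulation of image-contents via the identities $d(f^k) = d(f)^k$ and $d(af) \simeq_R a\, d(f)$ recorded in Section 2.
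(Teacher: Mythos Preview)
Your proposal is correct and follows essentially the same route as the paper: write $g = \frac{bh}{a}$ with $h \in R[X]$ primitive and $a,b \in R^\bullet$ coprime, use Gauss's lemma in the UFD $R[X]$ to deduce $h \mid_{R[X]} f^k$ (hence $h \in [\![f]\!]$), then obtain $a \mid_R d(h)$ from $a\,d(g) \simeq_R b\,d(h)$ and coprimality, and finally get $b \in [\![f]\!]$ from $bh = ag \in [\![f]\!]$ via divisor-closedness. The paper's write-up is slightly leaner---it observes directly that $d(h) \in [\![f]\!]$ (since $h \in [\![f]\!]$) to conclude $a \in [\![f]\!]$, rather than routing through $d(h) \mid_R d(f)^k$---but your argument and the paper's are substantively the same.
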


\begin{proof} There are some primitive $h\in R[X]$ and some $a,b\in R^{\bullet}$ such that ${\rm GCD}_R(a,b)=R^{\times}$ and $g=\frac{bh}{a}$. Since $g\in [\![f]\!]$ there are some $k\in\mathbb{N}$, $z\in R[X]^{\bullet}$ and $c\in R^{\bullet}$ such that $\frac{bhz}{ac}=f^k$. It follows that $bhz=f^kac$. Since $h$ is primitive we infer that $h\mid_{R[X]} f^k$, and thus $h\in [\![f]\!]$. Observe that $a\mid_R d(bh)\simeq_R bd(h)$. We infer that $a\mid_R d(h)$. Since $d(h)\in [\![f]\!]$ we have $a\in [\![f]\!]$. Moreover, $bh=ga\in [\![f]\!]$, and thus $b\in [\![f]\!]$.
\end{proof}

\begin{Le}\label{quotient} Let $R$ be a factorial domain, $K$ a quotient field of $R$, $X$ an indeterminate over $K$ and $f\in R[X]^{\bullet}$. Then the quotient group of $[\![f]\!]$ is generated by $([\![f]\!]\cap\mathcal{A}(R[X]))\cup [\![f]\!]^{\times}$.
\end{Le}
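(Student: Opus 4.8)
The plan is to use Lemma~\ref{represent} to reduce the claim about generators of the quotient group to a statement about the atoms of $R[X]$ dividing a power of $f$. First I would let $L$ denote the quotient group of $[\![f]\!]$ and fix an arbitrary $g\in [\![f]\!]$; it suffices to show that $g$ lies in the subgroup generated by $([\![f]\!]\cap\mathcal{A}(R[X]))\cup [\![f]\!]^{\times}$, since such elements clearly generate $[\![f]\!]$ and hence $L$. By Lemma~\ref{represent} we may write $g=\frac{bh}{a}$ with $a,b\in [\![f]\!]\cap R$, $h\in [\![f]\!]\cap R[X]$ primitive, $h\mid_{R[X]} f^k$ for some $k$, and ${\rm GCD}_R(a,b)=R^{\times}$.

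Next I would decompose each of the three pieces $a$, $b$, $h$ into atoms of $R[X]$. Since $R$ is factorial, $R[X]$ is factorial, so $a$ and $b$ factor (in $R[X]$) into atoms of $R$ together with a unit, and every atom of $R$ dividing $a$ or $b$ also divides $d(f)$ (because $a,b\in [\![f]\!]$, using Lemma~\ref{general}.2), hence lies in $[\![f]\!]\cap\mathcal{A}(R)\subseteq [\![f]\!]\cap\mathcal{A}(R[X])$. Similarly, since $h\mid_{R[X]} f^k$, the atoms of $R[X]$ in a factorization of $h$ all divide $f^k$ in $R[X]$, hence lie in $[\![f]\!]$ (as they divide $f^k$ in ${\rm Int}(R)$ too), so they belong to $[\![f]\!]\cap\mathcal{A}(R[X])$. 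Writing $g=\frac{bh}{a}$ then exhibits $g$ as a product of atoms from $[\![f]\!]\cap\mathcal{A}(R[X])$ (coming from $b$ and $h$), an element of $[\![f]\!]^{\times}=R^{\times}$ (the unit parts of the factorizations), and inverses of atoms from $[\![f]\!]\cap\mathcal{A}(R[X])$ (coming from $a$), which is exactly what is needed.

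The only point requiring a little care is the factorization of the constant elements $a$ and $b$: a priori an atom of $R$ need not be an atom of $R[X]$, but since $R$ is factorial, atoms of $R$ are prime in $R$, hence prime (thus atoms) in $R[X]$, so this is not actually an obstacle. The more substantive step is verifying that the atoms arising in a factorization of $h$ genuinely lie in $[\![f]\!]$; this follows because $h\mid_{R[X]} f^k$ implies each such atom divides $f^k$ in $R[X]$ and therefore in ${\rm Int}(R)$, and $[\![f]\!]$ is divisor-closed in ${\rm Int}(R)$. With these observations in hand the proof is essentially immediate from Lemma~\ref{represent}.
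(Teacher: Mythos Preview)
Your proposal is correct and follows essentially the same approach as the paper: reduce to showing $[\![f]\!]$ lies in the subgroup generated by $([\![f]\!]\cap\mathcal{A}(R[X]))\cup [\![f]\!]^{\times}$, apply Lemma~\ref{represent} to any $g\in [\![f]\!]$, and factor the resulting numerator and denominator in the factorial ring $R[X]$, noting that all the atoms appearing lie in $[\![f]\!]$. The only cosmetic difference is that the paper absorbs $b$ into $h$ (writing $x=h/a$ with $h\in [\![f]\!]\cap R[X]$) and simply invokes divisor-closedness of $[\![f]\!]$ to conclude that the $R[X]$-atoms of $h$ and the $R$-atoms of $a$ lie in $[\![f]\!]$, whereas you keep $b$ separate and spell out the divisibility arguments more explicitly.
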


\begin{proof} It is sufficient to show that $[\![f]\!]\subseteq\langle ([\![f]\!]\cap\mathcal{A}(R[X]))\cup [\![f]\!]^{\times}\rangle$. Let $x\in [\![f]\!]$. By Lemma \ref{represent} there are some $h\in [\![f]\!]\cap R[X]$ and $a\in [\![f]\!]\cap R$ such that $x=\frac{h}{a}$. Since $R$ and $R[X]$ are factorial we infer that $h\in\langle ([\![f]\!]\cap\mathcal{A}(R[X]))\cup [\![f]\!]^{\times}\rangle$ and $a\in\langle ([\![f]\!]\cap\mathcal{A}(R))\cup [\![f]\!]^{\times}\rangle$. Therefore, $x\in\langle ([\![f]\!]\cap\mathcal{A}(R[X]))\cup [\![f]\!]^{\times}\rangle$.
\end{proof}

Next we give a complete description of the set of atoms of monadic submonoids of ${\rm Int}(R)^{\bullet}$ that are generated by some $f\in R[X]^{\bullet}$. A part of this description can be found in the proof of \cite[Theorem 5.2]{R}.

\begin{Pro}\label{charatoms} Let $R$ be a factorial domain, $a\in R^{\bullet}$, $n\in\mathbb{N}$ and $\underline{f}\in (\mathcal{A}(R[X])\setminus R)^n$ a sequence of pairwise non-associated elements of $R[X]$. Set $f=a\prod_{i=1}^n f_i$. Then $\{u[\![f]\!]\mid u\in\mathcal{A}([\![f]\!])\}=\{u[\![f]\!]\mid u\in\mathcal{A}(R),u\mid_R d(f)\}\cup\{\frac{\prod_{i=1}^n f_i^{y_i}}{d(\prod_{i=1}^n f_i^{y_i})}[\![f]\!]\mid\underline{y}\in\mathbb{N}_0^n,\underline{y}$ is $\underline{f}$-irreducible$\}$.
\end{Pro}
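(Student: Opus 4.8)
The plan is to bring every atom of $[\![f]\!]$ into a normal form by combining Lemma~\ref{represent} with the description of constant atoms in Lemma~\ref{general}. Write $f=a\prod_{i=1}^n f_i$ as in the statement; since each $f_i$ is a non-constant atom of the factorial ring $R[X]$ it is primitive, and hence so is every $\prod_i f_i^{y_i}$. By Lemma~\ref{general}.2 the constant atoms of $[\![f]\!]$ are precisely the elements of the first set, so it remains to show that the non-constant atoms are exactly the elements $\frac{\prod_i f_i^{y_i}}{d(\prod_i f_i^{y_i})}$ with $\underline y\in\mathbb{N}_0^n$ $\underline f$-irreducible. Two observations will be used repeatedly: (i) for $\underline y\in\mathbb{N}_0^n$ and $c\in R^\bullet$ with $c\mid_R d(\prod_i f_i^{y_i})$ one has $\frac{\prod_i f_i^{y_i}}{c}\in [\![f]\!]$, because $\prod_i f_i^{y_i}\mid_{R[X]}f^k$ for $k=\max_i y_i$ (so $\prod_i f_i^{y_i}\in [\![f]\!]$) and $\frac{\prod_i f_i^{y_i}}{c}$ divides $\prod_i f_i^{y_i}$ in ${\rm Int}(R)$; in particular $\frac{\prod_i f_i^{y_i}}{d(\prod_i f_i^{y_i})}\in [\![f]\!]$, and it is a non-unit once $\underline y\neq\underline 0$; (ii) for $g_1,g_2\in {\rm Int}(R)^\bullet$ we have $d(g_1)d(g_2)\mid_R d(g_1g_2)$, $d(g_1)\mid_R d(g_2)$ whenever $g_1\mid_{{\rm Int}(R)}g_2$, and $d(\frac{g_1}{d(g_1)})\simeq_R 1$, all directly from the definition of $d$ and $d(rg)\simeq_R rd(g)$.

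For the inclusion from left to right I would take a non-constant atom $w$ of $[\![f]\!]$ and apply Lemma~\ref{represent}: $w=\frac{bh}{a'}$ with $a',b\in [\![f]\!]\cap R$, $h\in [\![f]\!]\cap R[X]$ primitive with $h\mid_{R[X]}f^k$ for some $k$, and ${\rm GCD}_R(a',b)=R^\times$. Unique factorization in $R[X]$ and primitivity of $h$ give $h\simeq_{R[X]}\prod_i f_i^{y_i}$ for some $\underline y$, with $\underline y\neq\underline 0$ since $\deg w=\deg h>0$; from the proof of Lemma~\ref{represent} (or directly from $a'd(w)\simeq_R bd(h)$) we get $a'\mid_R d(h)\simeq_R d(\prod_i f_i^{y_i})$. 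Then $w\simeq_{[\![f]\!]}b\cdot\frac{\prod_i f_i^{y_i}}{a'}$, the second factor being a non-unit of $[\![f]\!]$ by (i), so $b\in R^\times$ because $w$ is an atom. Writing $d(\prod_i f_i^{y_i})=a'\gamma$ yields $w\simeq_{[\![f]\!]}\gamma\cdot\frac{\prod_i f_i^{y_i}}{d(\prod_i f_i^{y_i})}$; here $\gamma\mid_R d(\prod_i f_i^{y_i})\mid_R d(f)^k=d(f^k)$, so $\gamma\mid_{[\![f]\!]}f^k$ and $\gamma\in [\![f]\!]$ by Lemma~\ref{general}.1, whence $\gamma\in R^\times$ and $w\simeq_{[\![f]\!]}\frac{\prod_i f_i^{y_i}}{d(\prod_i f_i^{y_i})}$. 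Finally, whenever $\underline y=\underline z+\underline z'$ with $d(\prod_i f_i^{y_i})=d(\prod_i f_i^{z_i})d(\prod_i f_i^{z'_i})$ we have $w=\frac{\prod_i f_i^{z_i}}{d(\prod_i f_i^{z_i})}\cdot\frac{\prod_i f_i^{z'_i}}{d(\prod_i f_i^{z'_i})}$ with both factors in $[\![f]\!]$ by (i); since $w$ is an atom one of them is a unit, i.e. $\underline z=\underline 0$ or $\underline z'=\underline 0$, so $\underline y$ is $\underline f$-irreducible.

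For the reverse inclusion, the constant part is again Lemma~\ref{general}.2. Given $\underline f$-irreducible $\underline y$, set $w=\frac{\prod_i f_i^{y_i}}{d(\prod_i f_i^{y_i})}\in [\![f]\!]$, a non-unit, and suppose $w=g_1g_2$ in $[\![f]\!]$. By Lemma~\ref{represent} (together with $d(rg)\simeq_R rd(g)$), $g_j\simeq_{[\![f]\!]}\frac{\beta_j}{\alpha_j}\prod_i f_i^{z^{(j)}_i}$ with ${\rm GCD}_R(\alpha_j,\beta_j)=R^\times$ and $\alpha_jd(g_j)\simeq_R\beta_jd(\prod_i f_i^{z^{(j)}_i})$. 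By (ii), $d(g_1)d(g_2)\mid_R d(w)=1$, so $d(g_1)\simeq_R d(g_2)\simeq_R 1$; then ${\rm GCD}_R(\alpha_j,\beta_j)=R^\times$ forces $\beta_j\in R^\times$ and $\alpha_j\simeq_R d(\prod_i f_i^{z^{(j)}_i})$, hence $g_j\simeq_{[\![f]\!]}\frac{\prod_i f_i^{z^{(j)}_i}}{d(\prod_i f_i^{z^{(j)}_i})}$. Comparing polynomial parts in $w=g_1g_2$ gives $\underline y=\underline{z^{(1)}}+\underline{z^{(2)}}$, and comparing scalars gives $d(\prod_i f_i^{y_i})=d(\prod_i f_i^{z^{(1)}_i})d(\prod_i f_i^{z^{(2)}_i})$ (both sides are products of prime powers over the fixed representative set $Q$, so associate implies equal). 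By $\underline f$-irreducibility, $\underline{z^{(1)}}=\underline 0$ or $\underline{z^{(2)}}=\underline 0$, i.e. $g_1$ or $g_2$ is a unit. Therefore $w\in\mathcal{A}([\![f]\!])$.

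The main obstacle will be the normalization step in the first inclusion: passing from the representation $w=\frac{bh}{a'}$ supplied by Lemma~\ref{represent} to the canonical form $\frac{\prod_i f_i^{y_i}}{d(\prod_i f_i^{y_i})}$. This hinges on observation (i) — that every $R$-divisor of $d(\prod_i f_i^{y_i})$ can be split off as a genuine factor inside $[\![f]\!]$ — and on Lemma~\ref{general}, which governs when such a constant factor is a non-unit. Once the canonical form is available, the equivalence with $\underline f$-irreducibility is only a matter of comparing image-contents, using the divisibility relation between $d(gh)$ and $d(g)d(h)$.
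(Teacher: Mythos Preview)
Your proof is correct and follows essentially the same route as the paper's. Both arguments write a non-constant atom as $\frac{bt}{c}$ with $t$ primitive, use that $t\simeq_{R[X]}\prod_i f_i^{y_i}$ by unique factorization, exploit atomicity to force the constant factors to be units so that the atom is associated to $\frac{\prod_i f_i^{y_i}}{d(\prod_i f_i^{y_i})}$, and then read off $\underline f$-irreducibility from a splitting of this normal form. The only organizational difference is that the paper does the normalization in one step inside ${\rm Int}(R)$ (invoking $\mathcal{A}([\![f]\!])=\mathcal{A}({\rm Int}(R))\cap [\![f]\!]$) and writes the fractional representation from scratch, whereas you stay inside $[\![f]\!]$ throughout and invoke Lemma~\ref{represent} explicitly, splitting the normalization into two applications of the atom property (first $b\in R^\times$, then $\gamma\in R^\times$); for the reverse inclusion the paper proves atomicity in ${\rm Int}(R)$ while you prove it directly in $[\![f]\!]$, which is equivalent by divisor-closedness.
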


\begin{proof} ``$\subseteq$'': Let $u\in\mathcal{A}([\![f]\!])$. Observe that $u\in\mathcal{A}({\rm Int}(R))$. There is some $k\in\mathbb{N}$ such that $u\mid_{{\rm Int}(R)} f^k$.

Case 1. $u\in R$: Clearly, $u\in\mathcal{A}(R)$. We have $u\mid_R d(f^k)=d(f)^k$, and thus $u\mid_R d(f)$.

Case 2. $u\not\in R$: There are some primitive $t\in R[X]$ and some $b,c\in R^{\bullet}$ such that ${\rm GCD}_{R[X]}(bt,c)=R[X]^{\times}$ and $u=\frac{bt}{c}$. Obviously, $c\mid_R d(t)$, and thus $\frac{t}{d(t)},\frac{bd(t)}{c}\in {\rm Int}(R)$ and $u=\frac{t}{d(t)}\frac{bd(t)}{c}$. Therefore, $u\simeq_{{\rm Int}(R)}\frac{t}{d(t)}$. There are some $e\in R^{\bullet}$ and some $s\in R[X]$ such that $u\frac{s}{e}=f^k$. This implies that $bst=a^kce\prod_{i=1}^n f_i^k$. Therefore, $t\mid_{R[X]}\prod_{i=1}^n f_i^k$, hence $t\simeq_{R[X]}\prod_{i=1}^n f_i^{y_i}$ for some $\underline{y}\in\mathbb{N}_0^n\setminus\{\underline{0}\}$. Observe that $u\simeq_{{\rm Int}(R)}\frac{\prod_{i=1}^n f_i^{y_i}}{d(\prod_{i=1}^n f_i^{y_i})}$, and thus $u[\![f]\!]=\frac{\prod_{i=1}^n f_i^{y_i}}{d(\prod_{i=1}^n f_i^{y_i})}[\![f]\!]$. We need to show that $\underline{y}$ is $\underline{f}$-irreducible. Let $\underline{\alpha},\underline{\beta}\in\mathbb{N}_0^n$ be such that $\underline{y}=\underline{\alpha}+\underline {\beta}$ and $d(\prod_{i=1}^n f_i^{y_i})=d(\prod_{i=1}^n f_i^{\alpha_i})d(\prod_{i=1}^n f_i^{\beta_i})$. Clearly, $\frac{\prod_{i=1}^n f_i^{\alpha_i}}{d(\prod_{i=1}^n f_i^{\alpha_i})},\frac{\prod_{i=1}^n f_i^{\beta_i}}{d(\prod_{i=1}^n f_i^{\beta_i})}\in {\rm Int}(R)$ and $\frac{\prod_{i=1}^n f_i^{y_i}}{d(\prod_{i=1}^n f_i^{y_i})}=\frac{\prod_{i=1}^n f_i^{\alpha_i}}{d(\prod_{i=1}^n f_i^{\alpha_i})}\frac{\prod_{i=1}^n f_i^{\beta_i}}{d(\prod_{i=1}^n f_i^{\beta_i})}$. Consequently, $\frac{\prod_{i=1}^n f_i^{\alpha_i}}{d(\prod_{i=1}^n f_i^{\alpha_i})}\in {\rm Int}(R)^{\times}$ or $\frac{\prod_{i=1}^n f_i^{\beta_i}}{d(\prod_{i=1}^n f_i^{\beta_i})}\in {\rm Int}(R)^{\times}$. This implies that $\underline{\alpha}=\underline{0}$ or $\underline{\beta}=\underline{0}$.

``$\supseteq$'': Case 1. Let $u\in\mathcal{A}(R)$ be such that $u\mid_R d(f)$. We have $d(f)\in [\![f]\!]$, and thus $u\in [\![f]\!]$. Let $y,z\in [\![f]\!]$ be such that $u=yz$. Then $y,z\in R$, hence $y\in R^{\times}=[\![f]\!]^{\times}$ or $z\in R^{\times}=[\![f]\!]^{\times}$. Consequently, $u\in\mathcal{A}([\![f]\!])$.

Case 2. Let $\underline{y}\in\mathbb{N}_0^n$ be $\underline{f}$-irreducible. First we show that $\frac{\prod_{i=1}^n f_i^{y_i}}{d(\prod_{i=1}^n f_i^{y_i})}\in\mathcal{A}({\rm Int}(R))$. Let $y,z\in {\rm Int}(R)$ be such that $\frac{\prod_{i=1}^n f_i^{y_i}}{d(\prod_{i=1}^n f_i^{y_i})}=yz$. There are some $b,c,e,f\in R^{\bullet}$ and some primitive $g,h\in R[X]$ such that ${\rm GCD}(b,e)=R^{\times}$, ${\rm GCD}(c,f)=R^{\times}$, $y=\frac{bg}{e}$ and $z=\frac{ch}{f}$. This implies that $g\mid_{R[X]}\prod_{i=1}^n f_i^{y_i}$ and $h\mid_{R[X]}\prod_{i=1}^n f_i^{y_i}$. Consequently, there are some $\underline{v},\underline{w}\in\mathbb{N}_0^n$ such that $g\simeq_{R[X]}\prod_{i=1}^n f_i^{v_i}$ and $h\simeq_{R[X]}\prod_{i=1}^n f_i^{w_i}$. Observe that $e\mid_R d(g)$ and $f\mid_R d(h)$. Since $d(yz)=1$, it follows that $d(y)=d(z)=1$. Therefore, $\frac{e}{b}\simeq_R d(g)$ and $\frac{f}{c}\simeq_R d(h)$. We infer that $\frac{\prod_{i=1}^n f_i^{y_i}}{d(\prod_{i=1}^n f_i^{y_i})}\simeq_R\frac{\prod_{i=1}^n f_i^{v_i}}{d(\prod_{i=1}^n f_i^{v_i})}\frac{\prod_{i=1}^n f_i^{w_i}}{d(\prod_{i=1}^n f_i^{w_i})}$. This implies that $\underline{y}=\underline{v}+\underline{w}$ and $d(\prod_{i=1}^n f_i^{v_i})d(\prod_{i=1}^n f_i^{w_i})$. Consequently, $\underline{v}=\underline{0}$ or $\underline{w}=\underline{0}$, and thus $y\in {\rm Int}(R)^{\times}$ or $z\in {\rm Int}(R)^{\times}$.

It is clear that $\prod_{i=1}^n f_i^{y_i}\in [\![f]\!]$, hence $\frac{\prod_{i=1}^n f_i^{y_i}}{d(\prod_{i=1}^n f_i^{y_i})}\in [\![f]\!]$. This implies that $\frac{\prod_{i=1}^n f_i^{y_i}}{d(\prod_{i=1}^n f_i^{y_i})}\in [\![f]\!]\cap\mathcal{A}({\rm Int}(R))=\mathcal{A}([\![f]\!])$.
\end{proof}

Note that the set of atoms of an arbitrary monadic submonoid $H$ of ${\rm Int}(R)$ can be derived from Proposition \ref{charatoms} and the fact that $\mathcal{A}(H)=\mathcal{A}({\rm Int}(R))\cap H$. We proceed with an important lemma which will enable us to identify certain divisorial ideals of monadic submonoids of ${\rm Int}(R)$.

\begin{Le}\label{LCM} Let $R$ be a factorial domain, $f\in {\rm Int}(R)^{\bullet}$ and $g\in [\![f]\!]$. Then $\{0\}\not={\rm LCM}_R(\{\frac{d(gh)}{d(h)}\mid h\in [\![f]\!]\})\subseteq [\![f]\!]$.
\end{Le}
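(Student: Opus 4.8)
The plan is to prove that $T:=\{\tfrac{d(gh)}{d(h)}\mid h\in[\![f]\!]\}$ is a nonempty subset of $R^{\bullet}$ which is bounded above with respect to $\mid_R$; since $R$ is factorial this makes ${\rm LCM}_R(T)$ a single nonzero associate class, and the containment ${\rm LCM}_R(T)\subseteq[\![f]\!]$ then follows from $[\![f]\!]$ being divisor-closed. I begin with two routine observations. For $h\in[\![f]\!]$ and a prime $p$ of $R$ we have $v_p(d(gh))=\min_{c\in R}\bigl(v_p(g(c))+v_p(h(c))\bigr)\geq\min_{c\in R}v_p(h(c))=v_p(d(h))$, because $v_p(g(c))\geq 0$ for $c\in R$; hence $\tfrac{d(gh)}{d(h)}\in R^{\bullet}$, and $T\neq\emptyset$ since $1=f^0\in[\![f]\!]$. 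Next, if $p\nmid d(f)$, choose $c_0\in R$ with $v_p(f(c_0))=0$; since $g$ and $h$ divide powers of $f$ in ${\rm Int}(R)$, evaluating such divisibilities at $c_0$ gives $v_p(g(c_0))=v_p(h(c_0))=0$, so $v_p(d(gh))=v_p(d(h))=0$. Thus only the finitely many prime divisors of $d(f)$ can occur in elements of $T$.

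The core step is a bound on $v_p\bigl(\tfrac{d(gh)}{d(h)}\bigr)$ uniform in $h\in[\![f]\!]$, for a fixed prime $p\mid d(f)$. Let $u_1,\dots,u_r$ be a system of representatives of $\mathcal A([\![f]\!])$, a finite set by the remarks of Section~2; since $[\![f]\!]$ is Krull, hence atomic, every $h\in[\![f]\!]$ is a unit times $\prod_{i=1}^r u_i^{n_i}$ for some $\underline n\in\mathbb N_0^r$, so $v_p(h(c))=\sum_i n_i v_p(u_i(c))=\langle\underline n,\Psi(c)\rangle$ with $\Psi(c)=\bigl(v_p(u_i(c))\bigr)_{i=1}^r\in(\mathbb N_0\cup\{\infty\})^r$. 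By Dickson's lemma $\Psi(R)$ has only finitely many minimal elements $\vec w^{(1)},\dots,\vec w^{(s)}$. For each $\vec w^{(\rho)}$ I choose a point $c^{(\rho)}\in R$ which is not a zero of $g$ and at which $v_p(u_i(\cdot))$ equals $\vec w^{(\rho)}_i$ for every index $i$ with $\vec w^{(\rho)}_i<\infty$: indeed such values are attained on a full residue class modulo a suitable power of $p$ (valuations of polynomials are locally constant away from their zeros), this class is infinite, and $g$ has only finitely many zeros in $R$. Now for any $h=\prod_i u_i^{n_i}\in[\![f]\!]$ pick $\rho$ minimizing $\langle\underline n,\vec w^{(\rho)}\rangle$; then $n_i=0$ whenever $\vec w^{(\rho)}_i=\infty$ (the minimum is finite), hence $v_p(h(c^{(\rho)}))=\langle\underline n,\vec w^{(\rho)}\rangle=\min_{c\in R}v_p(h(c))=v_p(d(h))$, and therefore
\[
v_p(d(gh))\ \leq\ v_p(g(c^{(\rho)}))+v_p(h(c^{(\rho)}))\ \leq\ C_p+v_p(d(h)),\qquad C_p:=\max_{1\leq\rho\leq s}v_p(g(c^{(\rho)}))<\infty,
\]
so $v_p\bigl(\tfrac{d(gh)}{d(h)}\bigr)\leq C_p$, a bound independent of $h$.

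Combining the two steps, every element of $T$ divides $\prod_{p\mid d(f)}p^{C_p}$, so $T$ is finite and $m:=\prod_{p\mid d(f)}p^{\max_{t\in T}v_p(t)}$ is a nonzero least common multiple of $T$; in particular ${\rm LCM}_R(T)\neq\{0\}$. Finally, for a prime $p\mid m$ pick $h\in[\![f]\!]$ with $v_p\bigl(\tfrac{d(gh)}{d(h)}\bigr)=v_p(m)$; since $gh\in[\![f]\!]$ and $[\![f]\!]$ is divisor-closed we get $d(gh)\in[\![f]\!]$, and then the factorization $d(gh)=d(h)\cdot\tfrac{d(gh)}{d(h)}$ inside ${\rm Int}(R)$ forces $\tfrac{d(gh)}{d(h)}\in[\![f]\!]$; as $p^{v_p(m)}\mid_R\tfrac{d(gh)}{d(h)}$, this yields $p^{v_p(m)}\in[\![f]\!]$, and since $m$ is a product of such prime powers we conclude $m\in[\![f]\!]$, i.e.\ ${\rm LCM}_R(T)\subseteq[\![f]\!]$. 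I expect the main obstacle to be the uniform bound in the middle paragraph: one must choose the finitely many evaluation points $c^{(\rho)}$ so that for \emph{every} $h\in[\![f]\!]$ at least one of them simultaneously realizes the minimum of $v_p(h(\cdot))$ and avoids the zeros of $g$. The finiteness of $\mathcal A([\![f]\!])$ up to associates together with Dickson's lemma reduce this to finitely many cases, and the local constancy of $p$-adic valuations of polynomials is what legitimizes the perturbation dodging the zeros of $g$.
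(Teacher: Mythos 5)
Your proof is correct and follows essentially the same route as the paper: finiteness of $\mathcal{A}([\![f]\!])$ up to associates, Dickson's lemma to reduce to finitely many evaluation points realizing $\mathrm{v}_p(d(h))$, local constancy of $p$-adic valuations to perturb those points away from zeros, and divisor-closedness of $[\![f]\!]$ for the final containment. The only (cosmetic) difference is that the paper first restricts to $\{x\in R\mid f(x)\neq 0\}$ so that all valuation vectors are finite before invoking Dickson, whereas you allow $\infty$-coordinates and perturb afterwards; both variants work.
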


\begin{proof} Let $Q$ be a system of representatives of $\mathcal{A}(R)$ and $d=d_Q$. We show that there is some $T\subseteq R$ such that for all $h\in [\![f]\!]$, $h(y)\not=0$ for all $y\in T$ and $\min\{\mathrm{v}_p(h(x))\mid x\in R\}=\min\{\mathrm{v}_p(h(x))\mid x\in T\}$ for all $p\in Q$.
Set $T=\{x\in R\mid f(x)\not=0\}$. Let $h\in [\![f]\!]$. Then $h\mid_{{\rm Int}(R)} f^k$ for some $k\in\mathbb{N}$. Let $y\in T$. Then $f(y)\not=0$, hence $f(y)^k\not=0$, and thus $h(y)\not=0$. Let $p\in Q$. There is some $v\in R$ such that $\min\{\mathrm{v}_p(h(x))\mid x\in R\}=\mathrm{v}_p(h(v))$. It is straightforward to prove that there is some $m\in\mathbb{N}$ such that $\mathrm{v}_p(h(v+p^l))=\mathrm{v}_p(h(v))$ for all $l\in\mathbb{N}_{\geq m}$. Since $R\setminus T$ is finite, we can find some $n\in\mathbb{N}_{\geq m}$ such that $v+p^n\in T$. This implies that $\min\{\mathrm{v}_p(h(x))\mid x\in R\}=\mathrm{v}_p(h(v+p^l))$, hence $\min\{\mathrm{v}_p(h(x))\mid x\in R\}=\min\{\mathrm{v}_p(h(x))\mid x\in T\}$.

Next we prove that for every $p\in Q$ there is some $z\in\mathbb{N}_0$ such that $\mathrm{v}_p(\frac{d(gk)}{d(k)})\leq z$ for all $k\in [\![f]\!]$.

Without restriction let $f\not\in R^{\times}$. By \cite[Theorem 3.6 and Theorem 5.2]{R} there is some finite $\emptyset\not=U\subseteq\mathcal{A}([\![f]\!])$ such that $[\![f]\!]=[U\cup [\![f]\!]^{\times}]$. Let $p\in Q$. By Dickson's theorem (see \cite[Theorem 1.5.3]{GHK}) there is some finite $\emptyset\not=S\subseteq T$ such that ${\rm Min}(\{(\mathrm{v}_p(u(x)))_{u\in U}\mid x\in T\})=\{(\mathrm{v}_p(u(x)))_{u\in U}\mid x\in S\}$. We show that $\min\{\mathrm{v}_p(l(x))\mid x\in R\}=\min\{\mathrm{v}_p(l(x))\mid x\in S\}$ for all $l\in [\![f]\!]$. Let $l\in [\![f]\!]$. There are some $\eta\in [\![f]\!]^{\times}$ and $(e_u)_{u\in U}\in\mathbb{N}_0^U$ such that $l=\eta\prod_{u\in U} u^{e_u}$. Clearly, $\min\{\mathrm{v}_p(l(x))\mid x\in R\}=\mathrm{v}_p(l(w))=\sum_{u\in U} e_u\mathrm{v}_p(u(w))$ for some $w\in T$. By Dickson's theorem (see \cite[Theorem 1.5.3]{GHK}) we can find some $y\in S$ such that $\mathrm{v}_p(u(y))\leq \mathrm{v}_p(u(w))$ for all $u\in U$. Since $\mathrm{v}_p(l(y))=\sum_{u\in U} e_u\mathrm{v}_p(u(y))\leq\sum_{u\in U} e_u\mathrm{v}_p(u(w))=\mathrm{v}_p(l(w))$ we infer that $\min\{\mathrm{v}_p(l(x))\mid x\in R\}=\mathrm{v}_p(l(y))=\min\{\mathrm{v}_p(l(x))\mid x\in S\}$.

Set $z=\max\{\mathrm{v}_p(g(x))\mid x\in S\}$. Then $z\in\mathbb{N}_0$. Let $k\in [\![f]\!]$. Now we prove that $\mathrm{v}_p(\frac{d(gk)}{d(k)})\leq z$. There is some $v\in S$ such that $\min\{\mathrm{v}_p(k(x))\mid x\in S\}=\mathrm{v}_p(k(v))$. We have $\mathrm{v}_p(\frac{d(gk)}{d(k)})=\min\{\mathrm{v}_p((gk)(x))\mid x\in R\}-\min\{\mathrm{v}_p(k(x))\mid x\in R\}=\min\{\mathrm{v}_p((gk)(x))\mid x\in S\}-\min\{\mathrm{v}_p(k(x))\mid x\in S\}\leq \mathrm{v}_p((gk)(v))-\mathrm{v}_p(k(v))=\mathrm{v}_p(g(v))\leq z$.

Set $P=\{p\in Q\mid\mathrm{v}_p(d(f))>0\}$. Then $P$ is finite. For every $h\in [\![f]\!]$ it follows that $\{p\in Q\mid\mathrm{v}_p(\frac{d(gh)}{d(h)})>0\}\subseteq P$. This implies that $0\not=\prod_{p\in P} p^{\max\{\mathrm{v}_p(\frac{d(gh)}{d(h)})\mid h\in [\![f]\!]\}}\in {\rm LCM}_R(\{\frac{d(gh)}{d(h)}\mid h\in [\![f]\!]\})$.

Note that $d(f)\in [\![f]\!]$. Consequently, $P\subseteq [\![f]\!]$, and thus $\prod_{p\in P} p^{\max\{\mathrm{v}_p(\frac{d(gh)}{d(h)})\mid h\in [\![f]\!]\}}\in [\![f]\!]$. Since least common multiples are unique up to units it follows immediately that ${\rm LCM}_R(\{\frac{d(gh)}{d(h)}\mid h\in [\![f]\!]\})\subseteq [\![f]\!]$.
\end{proof}

Let $R$ be a factorial domain, $Q$ a system of representatives of $\mathcal{A}(R)$ and $f\in {\rm Int}(R)^{\bullet}$. Then let $e_{f,Q}:[\![f]\!]\rightarrow R^{\bullet}$ be defined by $e_{f,Q}(g)=\prod_{p\in Q} p^{\max\{\mathrm{v}_p(\frac{d(gh)}{d(h)})\mid h\in [\![f]\!]\}}$ for all $g\in [\![f]\!]$. Observe that ${\rm LCM}_R(\{\frac{d(gh)}{d(h)}\mid h\in [\![f]\!]\})=e_{f,Q}(g)R^{\times}$ for all $g\in [\![f]\!]$. It follows from Lemma \ref{LCM} that $e_{f,Q}$ is well defined and $e_{f,Q}([\![f]\!])\subseteq [\![f]\!]$. In the following we suppose that a fixed $Q$ is given and set $e_f=e_{f,Q}$.

\bigskip
A well-known and very basic result in ring theory is that contractions of ideals to subrings are ideals again. In analogy, it holds that contractions of $s$-ideals of monoids to submonoids are $s$-ideals again. The system of $v$-ideals has a very different behavior. In the last section of this work we show that the contraction of a $v$-ideal of a Krull monoid to a monadic submonoid can fail to be a $v$-ideal. The next result, however, gives a positive answer under more restrictive conditions.

\begin{Pro}\label{contraction} Let $R$ be a factorial domain, $K$ a field of quotients of $R$, $X$ an indeterminate over $K$, $f\in R[X]^{\bullet}$ and $g\in [\![f]\!]$. Then $gK[X]\cap [\![f]\!]=\frac{g}{e_f(g)}[\![f]\!]\cap [\![f]\!]$. In particular, $gK[X]\cap [\![f]\!]\in\mathcal{I}_v([\![f]\!])$, and if $g\in\mathcal{A}(K[X])$, then $gK[X]\cap [\![f]\!]\in\mathfrak{X}([\![f]\!])$.
\end{Pro}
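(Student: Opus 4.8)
The plan is to prove the displayed equality by reducing it to a divisibility criterion for elements of $[\![f]\!]$, and then to read off the two ``in particular'' statements from standard Krull/Mori-monoid facts. Let $L$ denote a quotient group of $[\![f]\!]$. Since $g,e_f(g)\in[\![f]\!]$, the element $\frac{g}{e_f(g)}$ lies in $L$, and since $e_f(g)\in R^{\bullet}\subseteq K^{\times}$ it also lies in $K[X]$. For $x\in[\![f]\!]$ one has $x\in\frac{g}{e_f(g)}[\![f]\!]$ iff $\frac{x\,e_f(g)}{g}\in[\![f]\!]$, and $x\in gK[X]$ iff $\frac{x}{g}\in K[X]$. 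The implication $\frac{x\,e_f(g)}{g}\in[\![f]\!]\Rightarrow\frac{x}{g}\in K[X]$ is clear (divide by the nonzero constant $e_f(g)$). For the converse, note that $\frac{x\,e_f(g)}{g}\in L$ and $\frac{x\,e_f(g)}{g}\cdot g=x\,e_f(g)\in[\![f]\!]$, so, since $[\![f]\!]$ is divisor-closed in ${\rm Int}(R)^{\bullet}$, it would suffice to prove $\frac{x\,e_f(g)}{g}\in{\rm Int}(R)$. Hence everything reduces to the claim: \emph{if $x\in[\![f]\!]$ and $\frac{x}{g}\in K[X]$, then $\frac{x\,e_f(g)}{g}\in{\rm Int}(R)$.}

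To prove the claim I would write $\frac{x}{g}=\frac{t}{c}$ with $t\in R[X]$ primitive and $c\in R^{\bullet}$, and first show $t\in[\![f]\!]$. Using Lemma~\ref{represent}, write $x=\frac{b_1h_1}{a_1}$ and $g=\frac{b_2h_2}{a_2}$ with $a_i,b_i\in[\![f]\!]\cap R$ and $h_i\in[\![f]\!]\cap R[X]$ primitive and dividing a power of $f$ in $R[X]$. From $c\,x=g\,t$ one obtains the identity $b_1ca_2\,h_1=b_2a_1\,h_2t$ in $R[X]$; since a product of primitive polynomials over the factorial domain $R$ is primitive, comparing the primitive parts of both sides gives $h_1\simeq_{R[X]}h_2t$, whence $t\mid_{R[X]}h_1\mid_{R[X]}f^{k}$ for a suitable $k\in\mathbb{N}$, and therefore $t\in[\![f]\!]$.

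Next comes the valuation estimate exploiting the definition of $e_f$. Fix $p\in Q$. Because $t\in[\![f]\!]$, $t$ is one of the elements over which the maximum defining $e_f(g)$ is taken, so $\mathrm{v}_p(e_f(g))\geq\mathrm{v}_p(d(gt))-\mathrm{v}_p(d(t))$. Since $gt=cx$ we have $d(gt)\simeq_R c\,d(x)$, hence $\mathrm{v}_p(d(gt))=\mathrm{v}_p(c)+\mathrm{v}_p(d(x))\geq\mathrm{v}_p(c)$, and therefore $\mathrm{v}_p(e_f(g))\geq\mathrm{v}_p(c)-\mathrm{v}_p(d(t))$. Consequently, for every $z\in R$,
\[
\mathrm{v}_p\!\left(\tfrac{t(z)\,e_f(g)}{c}\right)=\mathrm{v}_p(t(z))+\mathrm{v}_p(e_f(g))-\mathrm{v}_p(c)\geq\mathrm{v}_p(t(z))-\mathrm{v}_p(d(t))\geq 0,
\]
because $\mathrm{v}_p(d(t))=\min\{\mathrm{v}_p(t(z'))\mid z'\in R\}$. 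As $p\in Q$ and $z\in R$ are arbitrary, $\frac{t\,e_f(g)}{c}=\frac{x\,e_f(g)}{g}$ lies in ${\rm Int}(R)$, which proves the claim and hence the equality $gK[X]\cap[\![f]\!]=\frac{g}{e_f(g)}[\![f]\!]\cap[\![f]\!]$.

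Finally, for the consequences: $[\![f]\!]$ is a Krull monoid, hence completely integrally closed, so the principal fractional ideal $\frac{g}{e_f(g)}[\![f]\!]$ is divisorial; since finite intersections of divisorial ideals of a Mori monoid are divisorial and the intersection in question is contained in $[\![f]\!]$ and is nonempty (it contains $f^{k}$ whenever $g\mid_{{\rm Int}(R)}f^{k}$), we get $gK[X]\cap[\![f]\!]\in\mathcal{I}_v([\![f]\!])$. If in addition $g\in\mathcal{A}(K[X])$, then $gK[X]$ is a nonzero prime ideal of the principal ideal domain $K[X]$, so its contraction $gK[X]\cap[\![f]\!]$ is a proper nonempty prime $s$-ideal of $[\![f]\!]$ which, being a $v$-ideal of the Krull monoid $[\![f]\!]$, is a $v$-product of height-one prime ideals; primeness then forces it to coincide with one of them, so $gK[X]\cap[\![f]\!]\in\mathfrak{X}([\![f]\!])$. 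The step I expect to be the crux is establishing $t\in[\![f]\!]$ and recognising that $t$ is precisely the test element needed in the maximum defining $e_f(g)$; after that, the argument is routine bookkeeping with $q$-adic valuations and with standard properties of Krull and Mori monoids.
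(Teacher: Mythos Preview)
Your approach is essentially the paper's: produce from $x/g$ an auxiliary element of $[\![f]\!]$, plug it into the maximum defining $e_f(g)$, and conclude. The ``in particular'' parts are also argued the same way.

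There is, however, one genuine slip. You write $\frac{x}{g}=\frac{t}{c}$ with $t\in R[X]$ primitive and $c\in R^{\bullet}$, but not every element of $K[X]^{\bullet}$ admits such a representation: take $R=\mathbb{Z}$, $f=2X(X-1)$, $g=X$, $x=2X$; then $x/g=2$ cannot be written as (primitive)$/$(integer). The correct normal form is $\frac{x}{g}=\frac{bt}{c}$ with $t$ primitive and $b,c\in R^{\bullet}$ (one may take $\gcd(b,c)=1$). Your argument survives this correction unchanged: from $cx=bgt$ and Lemma~\ref{represent} one still gets $h_1\simeq_{R[X]}h_2t$ by comparing primitive parts, hence $t\in[\![f]\!]$; moreover $c\,d(x)\simeq_R b\,d(gt)$, so $\mathrm{v}_p(e_f(g))\ge\mathrm{v}_p(d(gt))-\mathrm{v}_p(d(t))=\mathrm{v}_p(c)+\mathrm{v}_p(d(x))-\mathrm{v}_p(b)-\mathrm{v}_p(d(t))$, and therefore
\[
\mathrm{v}_p\!\Big(\frac{bt(z)\,e_f(g)}{c}\Big)\ge \mathrm{v}_p(t(z))+\mathrm{v}_p(d(x))-\mathrm{v}_p(d(t))\ge 0.
\]
The paper avoids this bookkeeping by writing $z=\frac{agh'}{b}$ with $h'\in{\rm Int}(R)$, $d(h')=1$, $\gcd(a,b)=1$, and arguing directly that $b\mid_R d(gh')\mid_R e_f(g)$; but the underlying idea is identical to yours.
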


\begin{proof} Set $e=e_f(g)$. ``$\subseteq$'': Let $z\in gK[X]\cap [\![f]\!]$. Then there are some $a,b\in R^{\bullet}$ and some $h^{\prime}\in {\rm Int}(R)$ such that $d(h^{\prime})=1$, ${\rm GCD}_R(a,b)=R^{\times}$ and $z=\frac{agh^{\prime}}{b}$. Clearly, $h^{\prime}h^{\prime\prime}=bf^k$ for some $k\in\mathbb{N}$ and some $h^{\prime\prime}\in {\rm Int}(R)$. There are some primitive $y^{\prime}\in R[X]$, some $y^{\prime\prime}\in R[X]$ and some $d\in R$ such that $h^{\prime}=\frac{y^{\prime}}{d(y^{\prime})}$ and $h^{\prime\prime}=\frac{y^{\prime\prime}}{d}$. We infer that $y^{\prime}y^{\prime\prime}=d(y^{\prime})dbf^k$. Since $y^{\prime}$ is primitive it follows that $y^{\prime}\mid_{R[X]} f^k$, and thus $y^{\prime}\in [\![f]\!]$. Therefore, $h^{\prime}=\frac{y^{\prime}}{d(y^{\prime})}\in [\![f]\!]$. Since $bz=agh^{\prime}$, we have $bd(z)\simeq_R ad(gh^{\prime})$. This implies that $a\mid_R d(z)$ and $b\mid_R d(gh^{\prime})=\frac{d(gh^{\prime})}{d(h^{\prime})}\mid_R e$. By Lemma \ref{LCM} we have $e\in [\![f]\!]$. It follows that $a,\frac{e}{b}\in [\![f]\!]$, hence $\frac{e}{g}z=\frac{e}{b}ah^{\prime}\in [\![f]\!]$. Consequently, $z\in\frac{g}{e}[\![f]\!]\cap [\![f]\!]$. ``$\supseteq$'': Trivial.

Observe that $\frac{g}{e}$ is an element of the quotient group of $[\![f]\!]$. Therefore, $gK[X]\cap [\![f]\!]$ is an intersection of fractional principal ideals of $[\![f]\!]$, and thus $gK[X]\cap [\![f]\!]\in\mathcal{I}_v([\![f]\!])$.

Now let $g\in\mathcal{A}(K[X])$. Clearly, $gK[X]\in {\rm spec}(K[X])$, and thus $gK[X]^{\bullet}\in s$-${\rm spec}(K[X]^{\bullet})\setminus\{\emptyset\}$. This implies that $gK[X]\cap [\![f]\!]=gK[X]^{\bullet}\cap [\![f]\!]\in s$-${\rm spec}([\![f]\!])\setminus\{\emptyset\}$. Since $[\![f]\!]$ is a Krull monoid and $gK[X]\cap [\![f]\!]$ is divisorial, we have $gK[X]\cap [\![f]\!]\in\mathfrak{X}([\![f]\!])$.
\end{proof}

As a consequence, we obtain a description of the set of height-one prime ideals (of monadic submonoids generated by some $f\in R[X]^{\bullet}$) that do not contain constant elements.

\begin{Co}\label{charemptsect} Let $R$ be a factorial domain, $K$ a field of quotients of $R$, $X$ an indeterminate over $K$ and $f\in R[X]^{\bullet}$. Then $\{P\in\mathfrak{X}([\![f]\!])\mid P\cap R=\emptyset\}=\{gK[X]\cap [\![f]\!]\mid g\in [\![f]\!]\cap\mathcal{A}(R[X])\setminus R\}=\{gK[X]\cap [\![f]\!]\mid g\in [\![f]\!]\cap\mathcal{A}(K[X])\}$. In particular, if $\mathcal{R}$ is a system of representatives of $[\![f]\!]\cap\mathcal{A}(R[X])\setminus R$, then $Q:\mathcal{R}\rightarrow\{P\in\mathfrak{X}([\![f]\!])\mid P\cap R=\emptyset\}$ defined by $Q(t)=tK[X]\cap [\![f]\!]$ is a bijection.
\end{Co}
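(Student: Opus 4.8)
The plan is to derive Corollary \ref{charemptsect} as a more or less direct combination of Proposition \ref{contraction}, Lemma \ref{general}, and Lemma \ref{quotient}, so the work is purely organizational rather than computational. I would prove the chain of inclusions
\[
\{P\in\mathfrak{X}([\![f]\!])\mid P\cap R=\emptyset\}\subseteq\{gK[X]\cap [\![f]\!]\mid g\in [\![f]\!]\cap\mathcal{A}(R[X])\setminus R\}\subseteq\{gK[X]\cap [\![f]\!]\mid g\in [\![f]\!]\cap\mathcal{A}(K[X])\}\subseteq\{P\in\mathfrak{X}([\![f]\!])\mid P\cap R=\emptyset\},
\]
closing the loop. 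The last inclusion is the easy one: by Proposition \ref{contraction}, if $g\in [\![f]\!]\cap\mathcal{A}(K[X])$ then $gK[X]\cap [\![f]\!]\in\mathfrak{X}([\![f]\!])$, and it contains no constant element since $g\in\mathcal{A}(K[X])$ is not a unit of $K[X]$, so $gK[X]$ (and hence its contraction) meets $R$ only in $0$, i.e.\ $gK[X]\cap R^{\bullet}=\emptyset$.

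For the middle inclusion I would argue that any $g\in [\![f]\!]\cap\mathcal{A}(R[X])\setminus R$ is also an atom of $K[X]$: a nonconstant polynomial in $R[X]$ that is irreducible in $R[X]$ is, by Gauss's lemma for the factorial domain $R$, irreducible in $K[X]$ (up to the primitive/constant decomposition; since $g$ is nonconstant and irreducible in $R[X]$ it is primitive, hence irreducible in $K[X]$). This immediately places $gK[X]\cap[\![f]\!]$ in the third set. The main inclusion — the first one — is where the real content sits. Given $P\in\mathfrak{X}([\![f]\!])$ with $P\cap R=\emptyset$, I would pick an atom $u\in\mathcal{A}([\![f]\!])$ with $u\in P$ (possible since $[\![f]\!]$ is a Krull monoid, so $P$ is generated by the atoms it contains, and a height-one prime of a Krull monoid always contains an atom). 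By Proposition \ref{charatoms} the atom $u$ is either a constant atom (excluded, since that would give $P\cap R\neq\emptyset$ via Lemma \ref{general}) or of the form $u\simeq_{[\![f]\!]}\frac{\prod f_i^{y_i}}{d(\prod f_i^{y_i})}$ for an $\underline{f}$-irreducible $\underline y$; in particular $u$ is associated in $K[X]$ to a product of the irreducible factors $f_i$, hence $uK[X]=t_1K[X]\cdots t_rK[X]$ for finitely many $t_j\in[\![f]\!]\cap\mathcal{A}(R[X])\setminus R$ (the distinct $f_i$ appearing). Since $P$ is prime and $u\in P$, some $t_j\in P$, and then $t_jK[X]\cap[\![f]\!]\subseteq P$; as both are height-one primes of the Krull monoid $[\![f]\!]$ (the left side by Proposition \ref{contraction}, using that $t_j\in\mathcal{A}(K[X])$ by Gauss), they must be equal. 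This gives $P=t_jK[X]\cap[\![f]\!]$ with $t_j$ in the required set.

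For the ``In particular'' clause I would note that $Q$ is well defined and surjective by the equality just proved, and injective because if $tK[X]\cap[\![f]\!]=t'K[X]\cap[\![f]\!]$ for $t,t'\in\mathcal{R}$, then (passing to $v$-ideals, or directly) $t$ and $t'$ are associated in $[\![f]\!]$: indeed $t'\in t'K[X]\cap[\![f]\!]=tK[X]\cap[\![f]\!]$, so $t'\in tK[X]$, hence $t\mid_{K[X]}t'$, and symmetrically $t'\mid_{K[X]}t$, so $t\simeq_{K[X]}t'$; since both are primitive in $R[X]$, the unit relating them lies in $K^{\times}$ and is forced into $R^{\times}=[\![f]\!]^{\times}$, so $t\simeq_{[\![f]\!]}t'$ and therefore $t=t'$ as elements of the system of representatives $\mathcal{R}$. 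The one point I would be most careful about is the claim that a height-one prime $P$ of the Krull monoid $[\![f]\!]$ with $P\cap R=\emptyset$ must contain an \emph{atom} of the required (non-constant) type — this uses both that Krull monoids are generated by atoms modulo units and the exhaustive atom description of Proposition \ref{charatoms}; I expect this bookkeeping, rather than any hard inequality, to be the crux.
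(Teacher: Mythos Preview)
Your overall strategy --- the three-inclusion cycle, with the middle and last inclusions handled via Gauss's lemma and Proposition~\ref{contraction} --- matches the paper's proof. There is, however, one genuine gap in your argument for the first inclusion.

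You write: ``some $t_j\in P$, and then $t_jK[X]\cap[\![f]\!]\subseteq P$.'' This inclusion is not automatic from $t_j\in P$. That hypothesis gives only $t_j[\![f]\!]\subseteq P$, but an element $z\in t_jK[X]\cap[\![f]\!]$ need not lie in $t_j[\![f]\!]$: the cofactor $z/t_j$ is only known to be in $K[X]$, not in $[\![f]\!]$. Nor can you argue that two height-one primes sharing the element $t_j$ must coincide --- in a Krull monoid they generally do not (see the height-one primes containing $u_3$ in Example~\ref{Ex1}). The paper closes this gap using the explicit description in Proposition~\ref{contraction}: one has $t_jK[X]\cap[\![f]\!]=\frac{t_j}{e_f(t_j)}[\![f]\!]\cap[\![f]\!]$, so for any $z$ in this set, $e_f(t_j)\,z\in t_j[\![f]\!]\subseteq P$; since $e_f(t_j)\in R$ and $P\cap R=\emptyset$, primality of $P$ forces $z\in P$. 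With this one step inserted, your proof is complete and correct.

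A minor remark: the paper reaches an irreducible factor in $P$ more directly than you do, simply observing that $f\in P$ (every nonempty prime $s$-ideal of $[\![f]\!]$ contains $f$, since each of its elements divides some power of $f$) and then factoring $f=a\prod_i f_i$ in $R[X]$ with $a\in R$; as $a\notin P$, some $f_i\in P$. Your route through the atom classification of Proposition~\ref{charatoms} reaches the same conclusion but is a slight detour.
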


\begin{proof} Since $R$ is factorial it follows that $\mathcal{A}(R[X])\setminus R\subseteq\mathcal{A}(K[X])$.

First we prove that $\{P\in\mathfrak{X}([\![f]\!])\mid P\cap R=\emptyset\}\subseteq\{gK[X]\cap [\![f]\!]\mid g\in [\![f]\!]\cap\mathcal{A}(R[X])\setminus R\}$. Let $P\in\mathfrak{X}([\![f]\!])$ be such that $P\cap R=\emptyset$. There are some $a\in [\![f]\!]\cap R$, some $n\in\mathbb{N}$ and some finite sequence $\underline{f}\in ([\![f]\!]\cap\mathcal{A}(R[X])\setminus R)^n$ such that $f=a\prod_{i=1}^n f_i$. Clearly, $f\in P$. Consequently, there is some $i\in [1,n]$ such that $f_i\in P$. It follows by Proposition \ref{contraction} that $f_iK[X]\cap [\![f]\!]\in\mathfrak{X}([\![f]\!])$. By Proposition \ref{contraction} we have $f_iK[X]\cap [\![f]\!]=\frac{f_i}{e_f(f_i)}[\![f]\!]\cap [\![f]\!]$. Let $z\in f_iK[X]\cap [\![f]\!]$. Then $e_f(f_i)z\in f_i[\![f]\!]\subseteq P$, and thus $z\in P$. This implies that $f_iK[X]\cap [\![f]\!]\subseteq P$, hence $P=f_iK[X]\cap [\![f]\!]$.

It is obvious that $\{gK[X]\cap [\![f]\!]\mid g\in [\![f]\!]\cap\mathcal{A}(R[X])\setminus R\}\subseteq\{gK[X]\cap [\![f]\!]\mid g\in [\![f]\!]\cap\mathcal{A}(K[X])\}$.

Finally we show that $\{gK[X]\cap [\![f]\!]\mid g\in [\![f]\!]\cap\mathcal{A}(K[X])\}\subseteq\{P\in\mathfrak{X}([\![f]\!])\mid P\cap R=\emptyset\}$. Let $g\in [\![f]\!]\cap\mathcal{A}(K[X])$. Set $P=gK[X]\cap [\![f]\!]$. It follows by Proposition \ref{contraction} that $P\in\mathfrak{X}([\![f]\!])$. Since $g\not\in K$ we have $gK[X]\cap R=\{0\}$, hence $P\cap R=\emptyset$.
\end{proof}

One method of determining the divisor-class group of a Krull monoid is to identify the $v$-product decompositions of certain principal ideals into height-one prime ideals. Next, we present a useful tool which can be used for that purpose.

\begin{Pro}\label{products} Let $R$ be a factorial domain, $K$ a field of quotients of $R$, $X$ an indeterminate over $K$, $f\in R[X]^{\bullet}$ and $L$ a quotient group of $[\![f]\!]$. For $g\in [\![f]\!]$ set $P_g=gK[X]\cap [\![f]\!]$.
\begin{itemize}
\item[\textnormal{\textbf{1.}}] $(P_gP_h)_v=P_{gh}$ for all $g,h\in [\![f]\!]$.
\item[\textnormal{\textbf{2.}}] If $g\in [\![f]\!]\cap\mathcal{A}(K[X])$, then $\mathrm{v}_{P_g}(x[\![f]\!])=\mathrm{v}_g(x)$ for all $x\in L$.
\item[\textnormal{\textbf{3.}}] If $g\in [\![f]\!]$, $Q\in\mathfrak{X}([\![f]\!])$ and $q\in Q\cap\mathcal{A}(R)$, then $\mathrm{v}_Q(g[\![f]\!])\leq\mathrm{v}_q(e_f(g))$.
\end{itemize}
\end{Pro}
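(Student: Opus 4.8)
I would prove the three assertions in the order (2), (3), (1), because (1) turns out to be a formal consequence of (3), and both (2) and (3) hinge on the same description of the height‑one localizations of $[\![f]\!]$. The recurring tool is an elementary fact: if $V\subseteq W$ are discrete valuation monoids with common quotient group $L$ and value group $\mathbb{Z}$, then $V=W$. Indeed, pick $\pi\in V$ with $\mathrm{v}_V(\pi)=1$; for $x\in L$ the element $x\pi^{-\mathrm{v}_V(x)}$ is a unit of $V$, hence of $W$, so $\mathrm{v}_W(x)=\mathrm{v}_V(x)\,\mathrm{v}_W(\pi)$, and surjectivity of $\mathrm{v}_W$ forces $\mathrm{v}_W(\pi)=1$, i.e. $\mathrm{v}_W=\mathrm{v}_V$. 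For (2): since $K[X]$ is a PID, an element $g\in[\![f]\!]\cap\mathcal{A}(K[X])$ is a prime of $K[X]$ with $\mathrm{v}_g(g)=1$, so (as $g\in L$) $\mathrm{v}_g$ restricts to a surjection $L\to\mathbb{Z}$ whose valuation monoid $\{x\in L:\mathrm{v}_g(x)\geq 0\}$ is discrete; by Proposition \ref{contraction} $P_g\in\mathfrak{X}([\![f]\!])$, and $[\![f]\!]_{P_g}$ is the discrete valuation monoid of $L$ with valuation $\mathrm{v}_{P_g}$. Writing an element of $[\![f]\!]_{P_g}$ as $a/b$ with $a\in[\![f]\!]$, $b\in[\![f]\!]\setminus P_g$, one has $\mathrm{v}_g(b)=0$ (as $b\notin gK[X]$) and $\mathrm{v}_g(a)\geq 0$, so $[\![f]\!]_{P_g}\subseteq\{x\in L:\mathrm{v}_g(x)\geq 0\}$; by the fact above these coincide, hence $\mathrm{v}_{P_g}=\mathrm{v}_g$ on $L$, giving $\mathrm{v}_{P_g}(x[\![f]\!])=\mathrm{v}_g(x)$.

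\textbf{Part (3), the crux.} Fix $Q$ and $q$. By Lemma \ref{general} the constant atoms of $Q$ are associated to $q$, $q[\![f]\!]$ is radical, and $Q$ is a minimal prime over $q[\![f]\!]$. As in the proof of Lemma \ref{LCM}, choose a finite $S\subseteq\{x\in R:f(x)\neq 0\}$ with $\mathrm{v}_q(d(l))=\min\{\mathrm{v}_q(l(s)):s\in S\}$ for all $l\in[\![f]\!]$. For $s\in S$ set $Q_s=\{l\in[\![f]\!]:q\mid_R l(s)\}$, a prime $s$-ideal containing $q$; the homomorphism $l\mapsto\mathrm{v}_q(l(s))$ extends to a surjection $L\to\mathbb{Z}$ (it sends $q\mapsto 1$) that is $\geq 0$ on $[\![f]\!]$, so its valuation monoid $\mathcal{O}_s$ is a discrete valuation monoid of $L$ containing $[\![f]\!]$. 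Two points are then established. (i) Every minimal prime of $q[\![f]\!]$, and in particular $Q$, equals some $Q_s$: by Lemma \ref{general}(1) and the choice of $S$ one has $q[\![f]\!]=\bigcap_{s\in S}Q_s$, and since $q[\![f]\!]$ is also the intersection of its finitely many minimal primes, a ``prime contains a product of ideals'' argument together with minimality identifies each minimal prime with some $Q_s$; so $Q=Q_{s_0}$ for some $s_0\in S$. (ii) Whenever $Q_s=Q$, the same computation as in Part (2) gives $[\![f]\!]_Q\subseteq\mathcal{O}_s$, hence $[\![f]\!]_Q=\mathcal{O}_s$ and $\mathrm{v}_Q(l)=\mathrm{v}_q(l(s))$ for all $l\in[\![f]\!]$. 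Now let $g\in[\![f]\!]$ and $n=\mathrm{v}_Q(g[\![f]\!])$. For each $s\in S$ with $Q_s\neq Q$ one has $Q_s\not\subseteq Q$, so some atom $u_s\in Q_s\setminus Q$ exists, with $\mathrm{v}_q(u_s(s))\geq 1$ and $\mathrm{v}_q(u_s(s'))=0$ for every $s'$ with $Q_{s'}=Q$. Put $h=\prod_{s:\,Q_s\neq Q}u_s^{\,n}\in[\![f]\!]$; then $\mathrm{v}_q(h(s'))=0$ if $Q_{s'}=Q$ and $\mathrm{v}_q(h(s))\geq n$ if $Q_s\neq Q$, so $\mathrm{v}_q(d(h))=0$, while for every $s\in S$, $\mathrm{v}_q((gh)(s))=\mathrm{v}_q(g(s))+\mathrm{v}_q(h(s))\geq n$ — using $\mathrm{v}_q(g(s))=\mathrm{v}_Q(g)=n$ when $Q_s=Q$ (by (ii)) and $\mathrm{v}_q(h(s))\geq n$ otherwise. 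Hence $\mathrm{v}_q(d(gh)/d(h))=\mathrm{v}_q(d(gh))\geq n$, so $\mathrm{v}_q(e_f(g))\geq n=\mathrm{v}_Q(g[\![f]\!])$.

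\textbf{Part (1).} The inclusion $P_gP_h\subseteq P_{gh}$ is immediate from $P_g=gK[X]\cap[\![f]\!]$, and $P_{gh}\in\mathcal{I}_v([\![f]\!])$ by Proposition \ref{contraction}, so $(P_gP_h)_v\subseteq P_{gh}$; since $[\![f]\!]$ is Krull, equality follows once $\mathrm{v}_{\mathfrak{P}}(P_g)+\mathrm{v}_{\mathfrak{P}}(P_h)\leq\mathrm{v}_{\mathfrak{P}}(P_{gh})$ for every $\mathfrak{P}\in\mathfrak{X}([\![f]\!])$. By Proposition \ref{contraction}, $P_g=\frac{g}{e_f(g)}[\![f]\!]\cap[\![f]\!]$, so $\mathrm{v}_{\mathfrak{P}}(P_g)=\max\{\mathrm{v}_{\mathfrak{P}}(g)-\mathrm{v}_{\mathfrak{P}}(e_f(g)),0\}$, and likewise for $h$ and $gh$. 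If $\mathfrak{P}\cap R=\emptyset$ then $\mathrm{v}_{\mathfrak{P}}(e_f(\cdot))=0$ since $e_f(\cdot)\in R$, and the inequality becomes the identity $\mathrm{v}_{\mathfrak{P}}(g)+\mathrm{v}_{\mathfrak{P}}(h)=\mathrm{v}_{\mathfrak{P}}(gh)$. If $\mathfrak{P}\cap R\neq\emptyset$, with constant atom $q$, then $\mathrm{v}_{\mathfrak{P}}(e_f(\cdot))=\mathrm{v}_q(e_f(\cdot))$ (by Lemma \ref{general}, only prime divisors of $e_f(\cdot)$ associated to $q$ lie in $\mathfrak{P}$, each with multiplicity $1$), and Part (3) gives $\mathrm{v}_{\mathfrak{P}}(g)\leq\mathrm{v}_q(e_f(g))$ and $\mathrm{v}_{\mathfrak{P}}(gh)\leq\mathrm{v}_q(e_f(gh))$, so $\mathrm{v}_{\mathfrak{P}}(P_g)=\mathrm{v}_{\mathfrak{P}}(P_h)=\mathrm{v}_{\mathfrak{P}}(P_{gh})=0$. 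In either case the inequality holds, which proves $(P_gP_h)_v=P_{gh}$.

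\textbf{Expected main obstacle.} The delicate step is Part (3): identifying the height‑one primes of $[\![f]\!]$ lying over $q$ with the ``evaluation primes'' $Q_s$ attached to points of the finite set $S$ that computes the $q$-adic image‑content (point (i) above), and then constructing the auxiliary element $h$ which keeps its own $q$-adic content trivial while forcing $q^{n}\mid d(gh)$. Once (2) and (3) are in hand, (1) is pure bookkeeping with the valuative description of Krull monoids.
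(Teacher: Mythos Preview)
Your argument is correct, but it follows a genuinely different route from the paper's.

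The paper proves the three parts in the order (1), (2), (3). For (1) it fixes a system of representatives $\mathcal{R}$ of $[\![f]\!]\cap\mathcal{A}(R[X])\setminus R$ and first shows $(P_t^k)_v=P_{t^k}$ for $t\in\mathcal{R}$ by the one–line squeeze $(P_t^{k+1})_v\subseteq P_{t^{k+1}}\subsetneq P_{t^k}=(P_t^k)_v$ (the strict inclusion coming from $P_{t^{k+1}}\in\mathcal{I}_v$ by Proposition~\ref{contraction}); it then passes to arbitrary $g,h$ via $v$-coprimality of the $P_t$ and Lemma~\ref{represent}. Part (2) is read off from (1). Part (3) is obtained purely inside the $v$-ideal formalism: by (1), (2) and Corollary~\ref{charemptsect} one has $(\prod_{P\cap R=\emptyset}P^{\mathrm{v}_P(g)})_v=P_g=\frac{g}{e_f(g)}[\![f]\!]\cap[\![f]\!]$, so the complementary factor is $(\prod_{P\cap R\neq\emptyset}P^{\mathrm{v}_P(g)})_v=(g[\![f]\!]\cup e_f(g)[\![f]\!])_v$, whence $\mathrm{v}_Q(g)\leq\mathrm{v}_Q(e_f(g))=\mathrm{v}_q(e_f(g))$.

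You reverse the logical flow: (2) via a direct DVM comparison, (3) by identifying $Q$ concretely as an evaluation prime $Q_{s_0}$ (with valuation $l\mapsto\mathrm{v}_q(l(s_0))$) and manufacturing an explicit witness $h$ to the $\mathrm{LCM}$, and finally (1) as valuative bookkeeping from (3). The paper's path is shorter and never leaves the $v$-ideal calculus; it also avoids the explicit identification of the height-one primes over $q$. Your path is more constructive and yields, as a bonus, the structural description that every $Q\in\mathfrak{X}([\![f]\!])$ with $q\in Q$ is of the form $\{l:\ q\mid_R l(s)\}$ for some $s$ in the Dickson set $S$, with $\mathrm{v}_Q(l)=\mathrm{v}_q(l(s))$ --- information not needed for this proposition but of independent interest. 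One small point you might make explicit: that $Q_s\not\subseteq Q$ when $Q_s\neq Q$ follows simply because $Q$ is a \emph{minimal} nonempty prime and $Q_s$ is a nonempty prime; no appeal to the later Lemma~\ref{divisorclosed} is required.
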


\begin{proof} Let $\mathcal{R}$ be a system of representatives of $[\![f]\!]\cap\mathcal{A}(R[X])\setminus R$. Note that $\mathcal{R}$ is finite.

\textbf{1.} First we show by induction that for all $k\in\mathbb{N}_0$ and $t\in\mathcal{R}$ it follows that $(P_t^k)_v=P_{t^k}$. The assertion is clear for $k=0$. Now let $k\in\mathbb{N}_0$ and $t\in\mathcal{R}$. Since $P_{t^{k+1}}$ is a divisorial ideal of $[\![f]\!]$ (by Proposition \ref{contraction}) it follows that $(P_t^{k+1})_v\subseteq P_{t^{k+1}}\subsetneq P_{t^k}=(P_t^k)_v$. We infer that $(P_t^{k+1})_v=P_{t^{k+1}}$.

Next we prove that $P_{\prod_{t\in\mathcal{R}} t^{n_t}}=(\prod_{t\in\mathcal{R}} P_t^{n_t})_v$ for all $(n_t)_{t\in\mathcal{R}}\in\mathbb{N}_0^{\mathcal{R}}$. Let $(n_t)_{t\in\mathcal{R}}\in\mathbb{N}_0^{\mathcal{R}}$. Observe that $P_s$ and $P_t$ are $v$-coprime for all distinct $s,t\in\mathcal{R}$. Since $[\![f]\!]$ is a Krull monoid we have $(\prod_{t\in\mathcal{R}} P_t^{n_t})_v=\bigcap_{t\in\mathcal{R}} (P_t^{n_t})_v=\bigcap_{t\in\mathcal{R}} P_{t^{n_t}}=(\bigcap_{t\in\mathcal{R}} t^{n_t}K[X])\cap [\![f]\!]=P_{\prod_{t\in\mathcal{R}} t^{n_t}}$.

Now let $g,h\in [\![f]\!]$. It follows from Lemma \ref{represent} that there are some $(n_t)_{t\in\mathcal{R}},(m_t)_{t\in\mathcal{R}}\in\mathbb{N}_0^{\mathcal{R}}$ such that $g\simeq_{K[X]}\prod_{t\in\mathcal{R}} t^{n_t}$ and $h\simeq_{K[X]}\prod_{t\in\mathcal{R}} t^{m_t}$. Therefore, $P_{gh}=P_{\prod_{t\in\mathcal{R}} t^{n_t+m_t}}=(\prod_{t\in\mathcal{R}} P_t^{n_t+m_t})_v=((\prod_{t\in\mathcal{R}} P_t^{n_t})_v(\prod_{t\in\mathcal{R}} P_t^{m_t})_v)_v=(P_{\prod_{t\in\mathcal{R}} t^{n_t}}P_{\prod_{t\in\mathcal{R}} t^{m_t}})_v=(P_gP_h)_v$.

\textbf{2.} Let $g\in [\![f]\!]\cap\mathcal{A}(K[X])$ and $x\in L$. There are some $y,z\in [\![f]\!]$ such that $x=\frac{y}{z}$. By 1 we have $y\in g^{\mathrm{v}_g(y)}K[X]\cap [\![f]\!]=(P_g^{\mathrm{v}_g(y)})_v$ and $y\not\in g^{\mathrm{v}_g(y)+1}K[X]\cap [\![f]\!]=(P_g^{\mathrm{v}_g(y)+1})_v$, and thus $\mathrm{v}_{P_g}(y[\![f]\!])=\mathrm{v}_g(y)$. Analogously, it follows that $\mathrm{v}_{P_g}(z[\![f]\!])=\mathrm{v}_g(z)$, hence $\mathrm{v}_{P_g}(x[\![f]\!])=\mathrm{v}_{P_g}(y[\![f]\!])-\mathrm{v}_{P_g}(z[\![f]\!])=\mathrm{v}_g(y)-\mathrm{v}_g(z)=\mathrm{v}_g(x)$.

\textbf{3.} Let $g\in [\![f]\!]$, $Q\in\mathfrak{X}([\![f]\!])$ and $q\in Q\cap\mathcal{A}(R)$. It is an easy consequence of Proposition \ref{contraction}, Corollary \ref{charemptsect}, 1 and 2 that $(\prod_{P\in\mathfrak{X}([\![f]\!]),P\cap R=\emptyset} P^{\mathrm{v}_P(g[\![f]\!])})_v=(\prod_{t\in\mathcal{R}} P_t^{\mathrm{v}_{P_t}(g[\![f]\!])})_v=(\prod_{t\in\mathcal{R}} P_t^{\mathrm{v}_t(g)})_v=P_{\prod_{t\in\mathcal{R}} t^{\mathrm{v}_t(g)}}=P_g=\frac{g}{e_f(g)}[\![f]\!]\cap [\![f]\!]=(\frac{e_f(g)}{g}[\![f]\!]\cup [\![f]\!])^{-1}$. Therefore, $(\prod_{P\in\mathfrak{X}([\![f]\!]),P\cap R\not=\emptyset} P^{\mathrm{v}_P(g[\![f]\!])})_v=g(\frac{e_f(g)}{g}[\![f]\!]\cup [\![f]\!])_v=(g[\![f]\!]\cup e_f(g)[\![f]\!])_v$. We infer that $\mathrm{v}_Q(g[\![f]\!])=\min\{\mathrm{v}_Q(g[\![f]\!]),\mathrm{v}_Q(e_f(g)[\![f]\!])\}$. Let $\mathcal{S}$ be a system of representatives of $[\![f]\!]\cap\mathcal{A}(R)$ such that $q\in\mathcal{S}$. It follows from Lemma \ref{general}.3 and \ref{general}.4 that $\mathrm{v}_Q(g[\![f]\!])\leq \mathrm{v}_Q(e_f(g)[\![f]\!])=\mathrm{v}_Q(\prod_{p\in\mathcal{S}} (p[\![f]\!])^{\mathrm{v}_p(e_f(g))})=\sum_{p\in\mathcal{S}}\mathrm{v}_p(e_f(g))\mathrm{v}_Q(p[\![f]\!])=\sum_{p\in\mathcal{S}}\mathrm{v}_p(e_f(g))\delta_{p,q}=\mathrm{v}_q(e_f(g))$.
\end{proof}

It is clear that every principal ideal of a monoid is a $v$-ideal. The converse is, of course, far from true. In the last part of this section we describe when the $v$-ideals in Proposition \ref{contraction} are principal.

\begin{Pro}\label{charprin} Let $R$ be a factorial domain, $K$ a field of quotients of $R$, $X$ an indeterminate over $K$, $f\in R[X]^{\bullet}$, and $g\in [\![f]\!]$. The following are equivalent:
\begin{itemize}
\item[\textnormal{\textbf{1.}}] $gK[X]\cap [\![f]\!]$ is a principal ideal of $[\![f]\!]$.
\item[\textnormal{\textbf{2.}}] $gK[X]\cap [\![f]\!]=\frac{g}{d(g)}[\![f]\!]$.
\item[\textnormal{\textbf{3.}}] $d(gh)=d(g)d(h)$ for all $h\in [\![f]\!]$.
\end{itemize}
\end{Pro}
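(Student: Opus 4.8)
The plan is to establish the equivalences $\textbf{1.}\Leftrightarrow\textbf{2.}$ and $\textbf{2.}\Leftrightarrow\textbf{3.}$, writing $P_g=gK[X]\cap[\![f]\!]$. Three elementary facts will be used repeatedly: that $\frac{g}{d(g)}\in[\![f]\!]$ (since $g=d(g)\cdot\frac{g}{d(g)}$ in ${\rm Int}(R)$ and $[\![f]\!]$ is divisor-closed), that $d(g)d(h)\mid_R d(gh)$ for every $h\in[\![f]\!]$ (because $d(g)\mid_R g(c)$ and $d(h)\mid_R h(c)$ for each $c\in R$), and that $d(af)\simeq_R ad(f)$ for $a\in R^{\bullet}$. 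The key external input is the description $P_g=\frac{g}{e_f(g)}[\![f]\!]\cap[\![f]\!]$ from Proposition \ref{contraction}, together with the identity ${\rm LCM}_R(\{\frac{d(gh)}{d(h)}\mid h\in[\![f]\!]\})=e_f(g)R^{\times}$.

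For $\textbf{3.}\Rightarrow\textbf{2.}$ I would observe that $\textbf{3.}$ says $\frac{d(gh)}{d(h)}=d(g)$ for all $h\in[\![f]\!]$, so $e_f(g)\simeq_R d(g)$; then $\frac{g}{e_f(g)}[\![f]\!]=\frac{g}{d(g)}[\![f]\!]$, and since $\frac{g}{d(g)}\in[\![f]\!]$, Proposition \ref{contraction} gives $P_g=\frac{g}{d(g)}[\![f]\!]\cap[\![f]\!]=\frac{g}{d(g)}[\![f]\!]$. For $\textbf{2.}\Rightarrow\textbf{3.}$ I would fix $h\in[\![f]\!]$ and note that $\frac{gh}{d(gh)}$ divides $gh\in[\![f]\!]$ in ${\rm Int}(R)$, hence belongs to $[\![f]\!]$, and also lies in $gK[X]$; thus $\frac{gh}{d(gh)}\in P_g=\frac{g}{d(g)}[\![f]\!]$, so $\frac{d(g)h}{d(gh)}=\frac{d(g)}{g}\cdot\frac{gh}{d(gh)}\in[\![f]\!]\subseteq{\rm Int}(R)$. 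Evaluating this polynomial at each $c\in R$ gives $d(gh)\mid_R d(g)h(c)$, hence $d(gh)\mid_R d(g)d(h)$; combined with $d(g)d(h)\mid_R d(gh)$ this yields $d(gh)\simeq_R d(g)d(h)$, and since $d=d_Q$ produces the normal form relative to the fixed system $Q$, in fact $d(gh)=d(g)d(h)$.

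The implication $\textbf{2.}\Rightarrow\textbf{1.}$ is immediate since $\frac{g}{d(g)}\in[\![f]\!]$, so the only substantial step, and the one I expect to be the main obstacle, is $\textbf{1.}\Rightarrow\textbf{2.}$: one must show that any principal generator of $P_g$ is essentially forced to be $\frac{g}{d(g)}$. Suppose $P_g=x[\![f]\!]$ with $x\in[\![f]\!]$. Since $g\in P_g$ we get $x\mid_{[\![f]\!]}g$, hence $x\mid_{K[X]}g$; and since $x\in P_g\subseteq gK[X]$ we get $g\mid_{K[X]}x$. As $K[X]^{\times}=K^{\times}$, these two divisibilities force $x=\lambda g$ for some $\lambda\in K^{\times}$. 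Now $\frac{g}{d(g)}\in[\![f]\!]\cap gK[X]=P_g=x[\![f]\!]$, so $x\mid_{[\![f]\!]}\frac{g}{d(g)}$, i.e.\ $\frac{g}{d(g)x}=\frac{1}{\lambda d(g)}\in[\![f]\!]$; being a nonzero constant in ${\rm Int}(R)$ it lies in $R^{\bullet}$, say it equals $s$, so $g=s\,d(g)\,x$. Applying $d$ and using $d(af)\simeq_R ad(f)$ gives $d(g)\simeq_R s\,d(g)\,d(x)$, whence $s\,d(x)\simeq_R 1$, so $s\in R^{\times}$ and $x\simeq_R\frac{g}{d(g)}$; therefore $P_g=x[\![f]\!]=\frac{g}{d(g)}[\![f]\!]$. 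The crux is precisely this observation — a principal generator of $P_g$ must coincide with $g$ up to a unit of $K[X]$, because $g$ itself lies in $P_g$ while every element of $P_g$ is a $K[X]$-multiple of $g$ — after which the membership $\frac{g}{d(g)}\in P_g$ pins down the remaining constant factor; everything else is routine bookkeeping with the image-content $d$ and Proposition \ref{contraction}.
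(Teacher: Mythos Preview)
Your proof is correct. The implication $\textbf{3.}\Rightarrow\textbf{2.}$ and the trivial $\textbf{2.}\Rightarrow\textbf{1.}$ match the paper's treatment, but the other two implications follow genuinely different (and somewhat more direct) routes than the paper.

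For $\textbf{1.}\Rightarrow\textbf{2.}$ the paper works through the description $P_g=\frac{g}{e_f(g)}[\![f]\!]\cap[\![f]\!]$: writing the generator $a$ as $\frac{gh'}{e_f(g)}$ and $\frac{g}{d(g)}=ah''$ with $h',h''\in[\![f]\!]$, it deduces $e_f(g)=d(g)h'h''$, observes that $h',h''$ must be constants, and then uses $a=\frac{g}{d(g)h''}\in{\rm Int}(R)$ to force $h''\in R^{\times}$. Your argument bypasses $e_f(g)$ entirely: the two-sided $K[X]$-divisibility immediately gives $x=\lambda g$ with $\lambda\in K^{\times}$, and the single membership $\frac{g}{d(g)}\in x[\![f]\!]$ pins down $\lambda$ up to a unit. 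This is slightly cleaner and isolates the geometric point (a principal generator of $P_g$ must be a $K[X]$-associate of $g$) more transparently.

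For $\textbf{2.}\Rightarrow\textbf{3.}$ the difference is more pronounced. The paper argues prime by prime: for each $p\in\mathcal{A}(R)$ it picks $a\in[\![f]\!]$ realizing $\mathrm{v}_p(e_f(g))$, shows $\frac{ga}{d(a)p^{\mathrm{v}_p(e_f(g))}}\in P_g=\frac{g}{d(g)}[\![f]\!]$, and extracts the inequality $\mathrm{v}_p(d(g))\geq\mathrm{v}_p(e_f(g))$. Your argument is global and avoids $e_f(g)$: from $\frac{gh}{d(gh)}\in P_g$ you get $\frac{d(g)h}{d(gh)}\in{\rm Int}(R)$ directly, and evaluation plus the GCD property of $d(h)$ finishes. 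Your route is shorter; the paper's route has the side benefit of making explicit that condition $\textbf{2.}$ forces $d(g)\simeq_R e_f(g)$, which is a slightly sharper structural statement.
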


\begin{proof} Set $e=e_f(g)$. By Proposition \ref{contraction} we have $gK[X]\cap [\![f]\!]=\frac{g}{e}[\![f]\!]\cap [\![f]\!]$.

$\textbf{1.}\Rightarrow\textbf{2.}$: There is some $a\in [\![f]\!]$ such that $gK[X]\cap [\![f]\!]=a[\![f]\!]$. There are some $h^{\prime},h^{\prime\prime}\in [\![f]\!]$ such that $a=\frac{gh^{\prime}}{e}$ and $\frac{g}{d(g)}=ah^{\prime\prime}$. Therefore, $e=d(g)h^{\prime}h^{\prime\prime}$, and thus $a=\frac{g}{d(g)h^{\prime\prime}}$. This implies that $d(g)h^{\prime\prime}\mid_R d(g)$, hence $h^{\prime\prime}\in [\![f]\!]^{\times}$. Consequently, $gK[X]\cap [\![f]\!]=\frac{g}{d(g)}[\![f]\!]$.

$\textbf{2.}\Rightarrow\textbf{3.}$: Let $h\in [\![f]\!]$. Since $d(g)d(h)\mid_R d(gh)$, it is sufficient to show that $\mathrm{v}_p(d(gh))\leq\mathrm{v}_p(d(g))+\mathrm{v}_p(d(h))$ for all $p\in\mathcal{A}(R)$. Let $p\in\mathcal{A}(R)$. There is some $a\in [\![f]\!]$ such that $\mathrm{v}_p(\frac{d(ga)}{d(a)})=\mathrm{v}_p(e)$. Observe that $\frac{ga}{d(a)p^{\mathrm{v}_p(e)}}\in gK[X]\cap [\![f]\!]=\frac{g}{d(g)}[\![f]\!]$. Therefore, there is some $k\in [\![f]\!]$ such that $ad(g)=kp^{\mathrm{v}_p(e)}d(a)$. This implies that $d(a)d(g)\simeq_R d(k)p^{\mathrm{v}_p(e)}d(a)$. Consequently, $\mathrm{v}_p(d(g))\geq\mathrm{v}_p(e)\geq\mathrm{v}_p(\frac{d(gh)}{d(h)})$, and thus $\mathrm{v}_p(d(gh))\leq \mathrm{v}_p(d(g))+\mathrm{v}_p(d(h))$.

$\textbf{3.}\Rightarrow\textbf{1.}$: Obviously, $e\simeq_R d(g)$, hence $gK[X]\cap R=\frac{g}{e}[\![f]\!]\cap [\![f]\!]=\frac{g}{d(g)}[\![f]\!]\cap [\![f]\!]=\frac{g}{d(g)}[\![f]\!]$.
\end{proof}

\section{First main result and important preparation results}

In this section we present the first of two main results of this work. It basically states that the divisor-class group of a monadic submonoid of ${\rm Int}(R)$ generated by some $f\in R[X]^{\bullet}$ (where $R$ is a factorial domain) is torsion-free. It is pointed out in the last section of this work that the condition ``$f\in R[X]^{\bullet}$'' is crucial here, since there are monadic submonoids of ${\rm Int}(\mathbb{Z})$ generated by some $f\in {\rm Int}(\mathbb{Z})^{\bullet}$ whose divisor-class group is not torsion-free. We proceed by preparing several useful results (which might be interesting on their own) to investigate the divisor-class group of arbitrary monadic submonoids of ${\rm Int}(R)$. We were not able to give a complete description of the structure of the divisor-class group of arbitrary monadic submonoids of ${\rm Int}(R)$. However, note that they are always finitely generated abelian groups.

\begin{Th}\label{main1} Let $R$ be a factorial domain, $K$ a field of quotients of $R$, $X$ an indeterminate over $K$ and $f\in R[X]^{\bullet}$. Set $r=|\{P\in\mathfrak{X}([\![f]\!])\mid P\cap R\not=\emptyset\}|-|\{u[\![f]\!]\mid u\in\mathcal{A}([\![f]\!])\cap R\}|$. Then $\mathcal{C}_v([\![f]\!])\cong\mathbb{Z}^r$, $|\{[P]\mid P\in\mathfrak{X}([\![f]\!])\}|\geq |\{P\in\mathfrak{X}([\![f]\!])\mid P\cap R\not=\emptyset,P$ is not principal$\}|$, and the following are equivalent:
\begin{itemize}
\item[\textnormal{\textbf{1.}}] $[\![f]\!]$ is factorial.
\item[\textnormal{\textbf{2.}}] $\mathcal{C}_v([\![f]\!])$ is finite.
\item[\textnormal{\textbf{3.}}] $d(gh)=d(g)d(h)$ for all $g,h\in [\![f]\!]$.
\end{itemize}
\end{Th}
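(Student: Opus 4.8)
The plan is to compute $\mathcal{C}_v([\![f]\!])$ directly from a presentation of the quotient group $L$ of the Krull monoid $[\![f]\!]$. Since $[\![f]\!]$ is Krull, $\mathcal{C}_v([\![f]\!])$ is the cokernel of the divisor homomorphism $\partial\colon L\to\mathcal{F}$ into the free abelian group $\mathcal{F}$ on the (finite) set $\mathfrak{X}([\![f]\!])$, where $\partial(x)=\sum_{P\in\mathfrak{X}([\![f]\!])}\mathrm{v}_P(x[\![f]\!])\,P$. Split $\mathfrak{X}([\![f]\!])=\mathcal{P}_0\cup\mathcal{P}_1$, where $\mathcal{P}_0=\{P\in\mathfrak{X}([\![f]\!])\mid P\cap R\ne\emptyset\}$ and, by Corollary~\ref{charemptsect}, $\mathcal{P}_1=\{P_t\mid t\in\mathcal{R}\}$ with $P_t=tK[X]\cap[\![f]\!]$ and $\mathcal{R}$ a finite system of representatives of the non-constant irreducible factors of $f$ in $R[X]$, the assignment $t\mapsto P_t$ being a bijection. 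By Lemma~\ref{quotient}, $L$ is generated by $[\![f]\!]\cap\mathcal{A}(R[X])$ together with units; by Lemma~\ref{general}.2 this generating set is, up to associates, $\mathcal{R}\cup\mathcal{S}$, where $\mathcal{S}$ is a system of representatives of the constant atoms $\mathcal{A}([\![f]\!])\cap R$. Hence $\partial(L)=\langle\partial(t)\mid t\in\mathcal{R}\rangle+\langle\partial(q)\mid q\in\mathcal{S}\rangle$, and the asserted rank is $r=|\mathcal{P}_0|-|\mathcal{S}|$.

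Next I would identify the two families of relators. For $t\in\mathcal{R}$, Proposition~\ref{products}.2 gives $\mathrm{v}_{P_{t'}}(t[\![f]\!])=\mathrm{v}_{t'}(t)=\delta_{t,t'}$ for all $t'\in\mathcal{R}$, so $\partial(t)$ equals $P_t$ plus a non-negative integer combination of the elements of $\mathcal{P}_0$; thus each $P_t$ occurs with coefficient $1$ in the single relator $\partial(t)$ and (since $\mathrm{v}_{P_t}(q[\![f]\!])=\mathrm{v}_t(q)=0$ for $q\in K^{\times}$, $q\in\mathcal{S}$) in no other relator. These $|\mathcal{R}|$ relators therefore eliminate all of $\mathcal{P}_1$, yielding $\mathcal{C}_v([\![f]\!])\cong\mathcal{F}(\mathcal{P}_0)/\langle\partial(q)\mid q\in\mathcal{S}\rangle$. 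For $q\in\mathcal{S}$ we just noted $\partial(q)\in\mathcal{F}(\mathcal{P}_0)$, and since $q[\![f]\!]$ is a radical ideal by Lemma~\ref{general}.3 all coefficients of $\partial(q)$ are $0$ or $1$, i.e.\ $\partial(q)=\sum_{Q\in\mathcal{P}_0,\ q\in Q}Q$. The structural point where I expect the real work to lie is this: by Lemma~\ref{general}.4 every $Q\in\mathcal{P}_0$ contains a constant atom, unique up to associates, so there is a well-defined map $\phi\colon\mathcal{P}_0\to\mathcal{S}$ sending $Q$ to that representative; it is surjective because every $q\in\mathcal{S}$, being an atom of the Krull monoid $[\![f]\!]$, lies in some height-one prime, necessarily in $\mathcal{P}_0$. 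Then the support of $\partial(q)$ is exactly $\phi^{-1}(q)$, and the fibres $\phi^{-1}(q)$, $q\in\mathcal{S}$, partition $\mathcal{P}_0$.

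Consequently $\mathcal{F}(\mathcal{P}_0)=\bigoplus_{q\in\mathcal{S}}\mathcal{F}(\phi^{-1}(q))$ splits the quotient as a direct sum of blocks $\mathcal{F}(\phi^{-1}(q))/\langle\sum_{Q\in\phi^{-1}(q)}Q\rangle$, and each such block is $\mathbb{Z}^{n_q}/\langle(1,\dots,1)\rangle\cong\mathbb{Z}^{\,n_q-1}$ with $n_q=|\phi^{-1}(q)|\ge 1$, because the all-ones vector is primitive in $\mathbb{Z}^{n_q}$. Summing gives $\mathcal{C}_v([\![f]\!])\cong\mathbb{Z}^{\sum_{q}(n_q-1)}=\mathbb{Z}^{|\mathcal{P}_0|-|\mathcal{S}|}=\mathbb{Z}^r$, in particular torsion-free. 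For the inequality on the number of ideal classes, I would first show that $Q\in\mathcal{P}_0$ is principal if and only if $n_{\phi(Q)}=1$: if $\phi^{-1}(q)=\{Q\}$ then the radical-ideal description gives $q[\![f]\!]=Q$, hence principal; conversely if $Q=x[\![f]\!]\in\mathcal{P}_0$ then, since the constant atom $q=\phi(Q)$ lies in $Q$, $q$ is an atom and $q=x\cdot(q/x)$ forces $x$ to be associate to $q$, so $q[\![f]\!]=Q$ is prime, which by the description of $q[\![f]\!]$ as an intersection of the primes in $\phi^{-1}(q)$ forces $\phi^{-1}(q)=\{Q\}$. Inside a block with $n_q\ge 2$, the classes $[Q]$ are the images of the standard basis vectors modulo the diagonal, hence pairwise distinct and nonzero; classes coming from distinct blocks are distinct because they have nonzero components in different direct summands. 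Thus $|\{[P]\mid P\in\mathfrak{X}([\![f]\!])\}|\ge\sum_{n_q\ge 2}n_q=|\{P\in\mathfrak{X}([\![f]\!])\mid P\cap R\ne\emptyset,\ P\text{ not principal}\}|$.

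Finally, for the equivalences: $\textbf{1}\Leftrightarrow\textbf{2}$ is immediate from $\mathcal{C}_v([\![f]\!])\cong\mathbb{Z}^r$, since $[\![f]\!]$ is factorial iff its divisor-class group is trivial iff $r=0$ iff $\mathbb{Z}^r$ is finite; $\textbf{2}\Rightarrow\textbf{3}$ is the last sentence of Proposition~\ref{chards}; and $\textbf{3}\Rightarrow\textbf{2}$ follows because condition~\textbf{3} is condition~\textbf{4} of Proposition~\ref{chards}, which by that proposition forces every $P\in\mathfrak{X}([\![f]\!])$ with $P\cap R\ne\emptyset$ to be principal, so every fibre $\phi^{-1}(q)$ is a singleton, $|\mathcal{P}_0|=|\mathcal{S}|$, and $r=0$. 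The main obstacle throughout is the bookkeeping needed to set up the presentation and, above all, to verify that the constant-atom relators have pairwise disjoint supports that partition $\mathcal{P}_0$; once that is in hand, everything reduces to the elementary fact that $\mathbb{Z}^n/\langle(1,\dots,1)\rangle$ is free of rank $n-1$.
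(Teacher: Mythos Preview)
Your argument is correct and rests on the same key ingredients as the paper's proof (Lemma~\ref{general}.3--4, Lemma~\ref{quotient}, Corollary~\ref{charemptsect}, Proposition~\ref{products}.2), but it is organized differently. The paper picks, for each constant atom $q$, one height-one prime from the fibre $\phi^{-1}(q)$ to discard, collects the remaining primes into a set $\mathcal{P}$, and then verifies by a three-case surjectivity argument and a separate injectivity argument that the resulting map $\mathbb{Z}^{\mathcal{P}}\to\mathcal{C}_v([\![f]\!])$ is an isomorphism; the inequality on ideal classes is then obtained by a case analysis on the coordinates of $h^{-1}([P])$. You instead realize $\mathcal{C}_v([\![f]\!])$ as the cokernel of the divisor map, eliminate the $\mathcal{P}_1$-coordinates using the relators $\partial(t)$, and then read off both the rank and the inequality from the block decomposition $\mathcal{F}(\mathcal{P}_0)=\bigoplus_{q\in\mathcal{S}}\mathcal{F}(\phi^{-1}(q))$ together with the elementary fact $\mathbb{Z}^n/\langle(1,\dots,1)\rangle\cong\mathbb{Z}^{n-1}$. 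This presentation-based route is somewhat more uniform---in particular, your treatment of the inequality (standard basis vectors modulo the diagonal are pairwise distinct and nonzero, and different blocks contribute to different direct summands) is cleaner than the paper's case analysis---at the mild cost of not exhibiting an explicit free basis of $\mathcal{C}_v([\![f]\!])$ indexed by height-one primes.
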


\begin{proof} There is some $\mathcal{P}\subseteq\{P\in\mathfrak{X}([\![f]\!])\mid P\cap R\not=\emptyset\}$ such that $|\{Q\in\mathfrak{X}([\![f]\!])\mid u\in Q$ and $Q\not\in\mathcal{P}\}|=1$ for every constant atom $u$ of $[\![f]\!]$. Let $h:\mathbb{Z}^{\mathcal{P}}\rightarrow\mathcal{C}_v([\![f]\!])$ be defined by $h((n_P)_{P\in\mathcal{P}})=[(\prod_{P\in\mathcal{P}} P^{n_P})_v]$ for all $(n_P)_{P\in\mathcal{P}}\in\mathbb{Z}^{\mathcal{P}}$. Clearly, $h$ is a well-defined group homomorphism. We show that $h$ is a group isomorphism.

First we show that $h$ is surjective. It suffices to prove that for every $P\in\mathfrak{X}([\![f]\!])$ there is some $(n_Q)_{Q\in\mathcal{P}}\in\mathbb{Z}^{\mathcal{P}}$ such that $h((n_Q)_{Q\in\mathcal{P}})=[P]$. Let $P\in\mathfrak{X}([\![f]\!])$.

Case 1. $P\in\mathcal{P}$: Set $n_Q=\delta_{P,Q}$ for all $Q\in\mathcal{P}$. Then $h((n_Q)_{Q\in\mathcal{P}})=[P]$.

Case 2. $P\cap R\not=\emptyset$ and $P\not\in\mathcal{P}$: Clearly, there is some constant atom $u$ of $[\![f]\!]$ such that $u\in P$. Set $n_Q=-1$ if $Q\in\mathcal{P}$ and $u\in Q$ and $n_Q=0$ if $Q\in\mathcal{P}$ and $u\not\in Q$. We have $u[\![f]\!]=(\prod_{Q\in\mathfrak{X}([\![f]\!]),u\in Q} Q)_v$ by Lemma \ref{general}.3, hence $(\prod_{Q\in\mathcal{P},u\in Q} Q^{-1})_v=u^{-1}P$. Therefore, $h((n_Q)_{Q\in\mathcal{P}})=[(\prod_{Q\in\mathcal{P},u\in Q} Q^{-1})_v]=[P]$.

Case 3. $P\cap R=\emptyset$: By Corollary \ref{charemptsect} there is some $g\in [\![f]\!]\cap\mathcal{A}(K[X])$ such that $P=gK[X]\cap [\![f]\!]$. It follows from Proposition \ref{products} that $\mathrm{v}_Q(g[\![f]\!])=\delta_{P,Q}$ for all $Q\in\mathfrak{X}([\![f]\!])$ such that $Q\cap R=\emptyset$. Consequently, $g[\![f]\!]=(P\prod_{i=1}^m Q_i)_v$ for some $m\in\mathbb{N}_0$ and some sequence $\underline{Q}\in\{Q\in\mathfrak{X}([\![f]\!])\mid Q\cap R\not=\emptyset\}^m$. Since $h$ is a group homomorphism, it follows by case 1 and case 2 that $h((n_Q)_{Q\in\mathcal{P}})=[(\prod_{i=1}^m Q_i^{-1})_v]$ for some $(n_Q)_{Q\in\mathcal{P}}\in\mathbb{Z}^{\mathcal{P}}$. Since $(\prod_{i=1}^m Q_i^{-1})_v=g^{-1}P$ we infer that $h((n_Q)_{Q\in\mathcal{P}})=[P]$.

Next we show that $h$ is injective. Let $(n_P)_{P\in\mathcal{P}}\in\mathbb{Z}^{\mathcal{P}}$ be such that $(\prod_{P\in\mathcal{P}} P^{n_P})_v$ is principal. Let $\mathcal{R}$ be a system of representatives of $[\![f]\!]\cap\mathcal{A}(R[X])$. By Lemma \ref{quotient} there is some $(n_u)_{u\in\mathcal{R}}\in\mathbb{Z}^{\mathcal{R}}$ such that $(\prod_{P\in\mathcal{P}} P^{n_P})_v=\prod_{u\in\mathcal{R}} u^{n_u}[\![f]\!]$. By Corollary \ref{charemptsect} and Proposition \ref{products} we infer that $n_u=0$ for all $u\in\mathcal{R}\setminus R$.

Claim: If $M\in\mathfrak{X}([\![f]\!])$ and $w\in M\cap\mathcal{R}\cap R$, then $\mathrm{v}_M((\prod_{P\in\mathcal{P}} P^{n_P})_v)=n_w$. Let $M\in\mathfrak{X}([\![f]\!])$ and $w\in M\cap\mathcal{R}\cap R$. It follows by Lemma \ref{general}.3 and \ref{general}.4 that $\mathrm{v}_M(u[\![f]\!])=\delta_{u,w}$ for all $u\in\mathcal{R}\cap R$. Therefore, $\mathrm{v}_M((\prod_{P\in\mathcal{P}} P^{n_P})_v)=\mathrm{v}_M(\prod_{u\in\mathcal{R}} u^{n_u}[\![f]\!])=\sum_{u\in\mathcal{R}} n_u\mathrm{v}_M(u[\![f]\!])=\sum_{u\in\mathcal{R}\cap R} n_u\mathrm{v}_M(u[\![f]\!])=\sum_{u\in\mathcal{R}\cap R} n_u\delta_{u,w}=n_w$.

We have to show that $n_M=0$ for all $M\in\mathcal{P}$. Let $M\in\mathcal{P}$. Obviously, there is some $w\in M\cap\mathcal{R}\cap R$. It is clear that there is some $Q\in\mathfrak{X}([\![f]\!])$ such that $w\in Q$ and $Q\not\in\mathcal{P}$. It follows by the claim that $n_M=\mathrm{v}_M((\prod_{P\in\mathcal{P}} P^{n_P})_v)=n_w=\mathrm{v}_Q((\prod_{P\in\mathcal{P}} P^{n_P})_v)=0$.

Observe that $|\mathcal{P}|=r$. This implies that $\mathcal{C}_v([\![f]\!])\cong\mathbb{Z}^{\mathcal{P}}\cong\mathbb{Z}^{|\mathcal{P}|}=\mathbb{Z}^r$.

Set $\mathcal{S}=\{P\in\mathfrak{X}([\![f]\!])\mid P\cap R\not=\emptyset,P$ is not principal$\}$. To show that $|\{[P]\mid P\in\mathfrak{X}([\![f]\!])\}|\geq |\mathcal{S}|$ it is sufficient to show that for all $P,Q\in\mathcal{S}$ such that $h^{-1}([P])=h^{-1}([Q])$ it follows that $P=Q$. Let $P,Q\in\mathcal{S}$ be such that $h^{-1}([P])=h^{-1}([Q])$. Note that if $P\in\mathcal{P}$, then $h^{-1}([P])=(\delta_{P,M})_{M\in\mathcal{P}}$. Moreover, if $P\not\in\mathcal{P}$ and $u\in P\cap\mathcal{A}(R)$, then $h^{-1}([P])_M=-1$ if $M\in\mathcal{P}$ and $u\in M$ and $h^{-1}([P])_M=0$ if $M\in\mathcal{P}$ and $u\not\in M$. In particular, if $P\not\in\mathcal{P}$, then $(h^{-1}([P]))_M\not=1$ for all $M\in\mathcal{P}$. Therefore, we have either that $P,Q\in\mathcal{P}$ or $P,Q\not\in\mathcal{P}$.

Case 1. $P,Q\in\mathcal{P}$: Observe that $(\delta_{P,M})_{M\in\mathcal{P}}=h^{-1}([P])=h^{-1}([Q])=(\delta_{Q,M})_{M\in\mathcal{P}}$, hence $P=Q$.

Case 2. $P,Q\not\in\mathcal{P}$: There are some $u\in P\cap\mathcal{A}(R)$ and $v\in Q\cap\mathcal{A}(R)$. Since $h^{-1}([P])=h^{-1}([Q])$, we infer that for all $M\in\mathcal{P}$, $u\in M$ if and only if $v\in M$. Since $P$ and $Q$ are not principal and $u[\![f]\!]$ and $v[\![f]\!]$ are radical ideals of $[\![f]\!]$ (by Lemma \ref{general}.3), there are some $P^{\prime}\in\mathfrak{X}([\![f]\!])\setminus\{P\}$ and $Q^{\prime}\in\mathfrak{X}([\![f]\!])\setminus\{Q\}$ such that $u\in P^{\prime}$ and $v\in Q^{\prime}$. It is immediately clear that $P^{\prime},Q^{\prime}\in\mathcal{P}$. It follows that $u,v\in P^{\prime}$, and thus $u\simeq_{[\![f]\!]} v$ by Lemma \ref{general}.4. The choice of $\mathcal{P}$ immediately implies that $P=Q$. Finally, we prove the equivalence.

$\textbf{1.}\Rightarrow\textbf{2.}$: Trivial.

$\textbf{2.}\Rightarrow\textbf{3.}$: This is an immediate consequence of Proposition \ref{chards}.

$\textbf{3.}\Rightarrow\textbf{1.}$: Since $[\![f]\!]$ is a Krull monoid it is sufficient to show that every $P\in\mathfrak{X}([\![f]\!])$ is principal. Let $P\in\mathfrak{X}([\![f]\!])$.

Case 1. $P\cap R\not=\emptyset$: It follows by Proposition \ref{chards} that $P$ is principal.

Case 2. $P\cap R=\emptyset$: By Corollary \ref{charemptsect} there is some $g\in [\![f]\!]\cap\mathcal{A}(K[X])$ such that $P=gK[X]\cap [\![f]\!]$. Therefore, $P$ is principal by Proposition \ref{charprin}.
\end{proof}

We continue with a few result concerning the structure of height-one prime ideals.

\begin{Le}\label{divisorclosed} Let $R$ be a factorial domain and $f\in {\rm Int}(R)^{\bullet}$. Then $\{[\![a]\!]\mid a\in [\![f]\!]\}=\{[\![f]\!]\setminus\bigcup_{P\in\mathcal{P}} P\mid\mathcal{P}\subseteq\mathfrak{X}([\![f]\!])\}$ is the set of divisor-closed submonoids of $[\![f]\!]$.
\end{Le}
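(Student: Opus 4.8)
The plan is to prove the three sets in the statement coincide by identifying both explicitly described sets with the set $\mathcal D$ of divisor-closed submonoids of $H:=[\![f]\!]$. I will use throughout that $H$ is a Krull monoid, that $\mathfrak X(H)$ and $\{uH\mid u\in\mathcal A(H)\}$ are finite, and the standard divisibility calculus in Krull monoids: for $x\in H$ one has $xH=\bigl(\prod_{P\in\mathfrak X(H)}P^{\mathrm{v}_P(xH)}\bigr)_v$, $x\mid_H y$ if and only if $\mathrm{v}_P(xH)\le\mathrm{v}_P(yH)$ for every $P\in\mathfrak X(H)$, and $x\in P$ if and only if $\mathrm{v}_P(xH)>0$.

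First I would show that $\{H\setminus\bigcup_{P\in\mathcal P}P\mid\mathcal P\subseteq\mathfrak X(H)\}=\mathcal D$. For ``$\subseteq$'', fix $\mathcal P\subseteq\mathfrak X(H)$ and put $S=H\setminus\bigcup_{P\in\mathcal P}P$; then $1\in S$, $S$ is multiplicatively closed because each $P$ is a prime $s$-ideal, and $S$ is divisor-closed because each $P$ is an $s$-ideal. For ``$\supseteq$'', let $T\in\mathcal D$ and set $\mathcal P=\{P\in\mathfrak X(H)\mid P\cap T=\emptyset\}$; the inclusion $T\subseteq H\setminus\bigcup_{P\in\mathcal P}P$ is immediate, and conversely, given $x$ in the right-hand side, for each of the finitely many $P\in\mathfrak X(H)$ with $x\in P$ one has $P\cap T\ne\emptyset$, so choosing $t_P\in P\cap T$ and setting $t=\prod_{P\in\mathfrak X(H),\,x\in P}t_P\in T$ yields $\mathrm{v}_P(tH)\ge1$ whenever $x\in P$; with $k=\max\{\mathrm{v}_P(xH)\mid P\in\mathfrak X(H)\}$ this gives $x\mid_H t^k$, and divisor-closedness of $T$ forces $x\in T$.

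Next I would show $\{[\![a]\!]\mid a\in H\}=\mathcal D$. The inclusion ``$\subseteq$'' holds since monadic submonoids are divisor-closed. For ``$\supseteq$'', let $T\in\mathcal D$; since $H$ is Krull it is atomic, and divisor-closedness gives $\mathcal A(T)=\mathcal A(H)\cap T$ together with atomicity of $T$. Using finiteness of $\{uH\mid u\in\mathcal A(H)\}$ I choose a finite set $U\subseteq\mathcal A(T)$ of representatives of the associate classes of $\mathcal A(T)$; since $H^{\times}=T^{\times}$ lies in every divisor-closed submonoid, writing each element of $T$ as a unit times a product of atoms of $T$ (each associate to an element of $U$) shows $T=[\![U]\!]_H$, and finally $[\![U]\!]_H=[\![\prod_{u\in U}u]\!]_H$ (the empty product interpreted as $1$, so $T=H^{\times}=[\![1]\!]$ when $U=\emptyset$), since each $u\in U$ divides $\prod_{u\in U}u$ while $\prod_{u\in U}u\in[\![U]\!]_H$. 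As $\prod_{u\in U}u\in H$, this exhibits $T$ as a monadic submonoid of $H$. Combining the two steps gives the triple equality of the statement.

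The step I expect to require the most care is ``divisor-closed $\Rightarrow$ monadic'': this is precisely where the finiteness of $\{uH\mid u\in\mathcal A(H)\}$ is indispensable — for a general Krull monoid a divisor-closed submonoid generated by infinitely many pairwise non-associated atoms need not be monadic — and one must also invoke atomicity of $H$ and the inclusion $H^{\times}\subseteq T$. The valuation bookkeeping in the first step is routine, but it genuinely uses the finiteness of $\mathfrak X(H)$, which is what makes the auxiliary product $t$ and the exponent $k$ legitimate.
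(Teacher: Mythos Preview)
Your proof is correct. The paper's own proof is much terser: it observes that, since $[\![f]\!]$ is Krull, the set of prime $s$-ideals of $[\![f]\!]$ is exactly $\{\bigcup_{P\in\mathcal P}P\mid\mathcal P\subseteq\mathfrak X([\![f]\!])\}$, notes that this set is finite, and then invokes \cite[Lemma~2.2.1 and 2.2.3]{GHK} (which state that divisor-closed submonoids are precisely the complements of prime $s$-ideals, and that when $s\text{-}{\rm spec}$ is finite every divisor-closed submonoid is monadic). Your argument is essentially a self-contained unpacking of those two cited lemmas in the Krull setting, using the valuation calculus and the finiteness of $\{uH\mid u\in\mathcal A(H)\}$ directly rather than passing through the description of prime $s$-ideals. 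Both routes are short; the paper's buys brevity by quoting, yours buys independence from the reference. One small remark: in your first step the construction of $t$ and $k$ only needs that each fixed $x$ lies in finitely many height-one primes and has bounded valuations, which holds in every Krull monoid --- so that step does not actually require the global finiteness of $\mathfrak X(H)$, contrary to your closing comment.
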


\begin{proof} Note that $[\![f]\!]$ is a Krull monoid. Therefore, $\{\bigcup_{P\in\mathcal{P}} P\mid\mathcal{P}\subseteq\mathfrak{X}([\![f]\!])\}$ is the set of prime $s$-ideals of $[\![f]\!]$. It is well known that the set of prime $s$-ideals of $[\![f]\!]$ is finite. Consequently, the assertion follows from \cite[Lemma 2.2.1 and 2.2.3]{GHK}.
\end{proof}

\begin{Le}\label{heightonesub} Let $T$ be a Krull monoid and $H\subseteq T$ a saturated submonoid. Then for every $P\in\mathfrak{X}(H)$ there is some $Q\in\mathfrak{X}(T)$ such that $P=Q\cap H$. In particular, $\mathfrak{X}(H)$ is the set of minimal elements of $\{Q\cap H\mid Q\in\mathfrak{X}(T)\}\setminus\{\emptyset\}$.
\end{Le}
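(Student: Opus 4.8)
The plan is to use the standard theory of saturated submonoids of Krull monoids, as developed in \cite[Chapter 2]{GHK}. Recall that a saturated submonoid $H$ of a Krull monoid $T$ is itself a Krull monoid, and that the embedding $H \hookrightarrow T$ is a divisor homomorphism. The key structural fact is that for a Krull monoid, the set of prime $s$-ideals is exactly the set of unions of families of height-one prime ideals, and that height-one primes of $H$ correspond to the minimal nonempty prime $s$-ideals. So I would start by taking $P \in \mathfrak{X}(H)$ and considering its extension to $T$: the $s$-ideal $PT$ is a proper $s$-ideal of $T$ (properness follows because $H \subseteq T$ is saturated, so $PT \cap H = P \neq H$), hence $PT$ is contained in some $Q \in \mathfrak{X}(T)$. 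Then $P \subseteq Q \cap H$, and since $Q \cap H$ is a nonempty prime $s$-ideal of $H$ (contraction of a prime $s$-ideal along a monoid homomorphism is prime, and nonempty because it contains $P$), minimality of $P$ in the set of nonempty prime $s$-ideals forces $P = Q \cap H$.

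Alternatively — and this is probably the cleaner route since it avoids worrying about whether $PT$ is the ``right'' extension — I would argue via localization and valuations. Since $T$ is Krull, $T = \bigcap_{Q \in \mathfrak{X}(T)} T_Q$ with each $T_Q$ a discrete valuation monoid, and the corresponding valuations $\mathrm{v}_Q$ restrict to $H$. The fact that $H \subseteq T$ is saturated means $H = \{x \in \mathsf{q}(H) \mid \mathrm{v}_Q(x) \geq 0 \text{ for all } Q \in \mathfrak{X}(T)\} \cap \mathsf{q}(H)$, i.e. $H$ is defined inside its quotient group by the nonnegativity of the family $(\mathrm{v}_Q|_{\mathsf{q}(H)})_{Q}$. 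This is precisely the setup of \cite[Theorem 2.4.7 / Proposition 2.4.4]{GHK} (saturated submonoids are the same as monoids defined by a subfamily of a defining family): the height-one primes of $H$ are obtained as the contractions $Q \cap H$ for those $Q$ whose restricted valuation is nontrivial on $\mathsf{q}(H)$, after discarding redundant ones. Concretely, $P = \{x \in H \mid \mathrm{v}_Q(x) > 0\} = Q \cap H$ for a suitable $Q$.

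For the ``in particular'' clause: having shown every $P \in \mathfrak{X}(H)$ has the form $Q \cap H$ with $Q \in \mathfrak{X}(T)$, it remains to identify $\mathfrak{X}(H)$ with the minimal elements of $\{Q \cap H \mid Q \in \mathfrak{X}(T)\} \setminus \{\emptyset\}$. One inclusion: each $P \in \mathfrak{X}(H)$ is of this form and is minimal among nonempty prime $s$-ideals of $H$, hence a fortiori minimal in the (smaller) set $\{Q \cap H\} \setminus \{\emptyset\}$. Conversely, if $Q \cap H \neq \emptyset$ is minimal in that set, then $Q \cap H$ is a nonempty prime $s$-ideal of $H$, so it contains some $P \in \mathfrak{X}(H)$ (every nonempty prime $s$-ideal of a Krull monoid contains a height-one prime); by the first part $P = Q' \cap H$ for some $Q' \in \mathfrak{X}(T)$, so $Q' \cap H \subseteq Q \cap H$ are both in the set, and minimality gives $Q \cap H = Q' \cap H = P \in \mathfrak{X}(H)$.

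The main obstacle — really the only subtle point — is verifying that the contraction $Q \cap H$ stays \emph{nonempty} and that minimality transfers correctly between ``minimal among all nonempty prime $s$-ideals of $H$'' (the definition of $\mathfrak{X}(H)$) and ``minimal among contractions of height-one primes of $T$''; these a priori differ because not every prime $s$-ideal of $H$ need be a contraction from $\mathfrak{X}(T)$ in an obvious way. This is handled by the fact, available from the Krull structure theory in \cite{GHK}, that every nonempty prime $s$-ideal of a Krull monoid contains a height-one prime, which bridges the two notions; I would cite the relevant statements (\cite[Lemma 2.2.1, Theorem 2.4.7]{GHK}) rather than reprove them. Everything else is a routine unwinding of definitions.
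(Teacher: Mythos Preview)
Your first route has a gap at the step ``$PT$ is a proper $s$-ideal of $T$, hence $PT$ is contained in some $Q \in \mathfrak{X}(T)$''. Properness of an $s$-ideal does \emph{not} imply containment in a height-one prime of a Krull monoid: already in $T=\mathbb{N}_0^2$ the proper $s$-ideal $T\setminus\{0\}$ lies in neither height-one prime. The claim happens to be true in your situation, but it needs an argument. One fix: pick $p\in P$; by saturation $pT\cap H=pH$, hence $\sqrt[T]{pT}\cap H=\sqrt[H]{pH}\subseteq P$; write $\sqrt[T]{pT}=Q_1\cap\cdots\cap Q_n$ with $Q_i\in\mathfrak{X}(T)$ and use primality of $P$ to get $Q_j\cap H\subseteq P$ for some $j$, whence $Q_j\cap H=P$ by minimality. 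Your second route, citing the defining-family machinery of \cite{GHK}, is correct, and your treatment of the ``in particular'' clause is clean.

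The paper's argument is closest to the repaired version of your first route, but works with $v$-ideals rather than $s$-ideals and radicals. It passes to the $v$-extension $P_{v_T}$, invokes \cite[Proposition 2.4.2.3]{GHK} to obtain $P_{v_T}\cap H=P$ (this is the step that replaces your bare saturation argument and guarantees properness of the extension), writes $P_{v_T}=\bigcap_i (Q_i^{n_i})_{v_T}$ as a finite intersection of $v$-primary components in the Krull monoid $T$, uses primality of $P$ to isolate one component with $(Q_j^{n_j})_{v_T}\cap H=P$, and then takes radicals to conclude $Q_j\cap H=P$. Compared with your second route, the paper's version has the virtue of making the single external input (Proposition 2.4.2.3) explicit rather than invoking the full structure theorem for saturated submonoids; compared with your first route, it avoids the $s$-ideal pitfall entirely by staying in the $v$-ideal world where containment in a height-one prime is automatic from the primary decomposition.
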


\begin{proof} Let $P\in\mathfrak{X}(H)$. Clearly, $H$ is a Krull monoid, and thus $P$ is a divisorial ideal of $H$. Therefore, \cite[Proposition 2.4.2.3]{GHK} implies that $P_{v_T}\cap H=P$. There is some $n\in\mathbb{N}$, some injective sequence $\underline{Q}\in\mathfrak{X}(T)^n$ and some $\underline{n}\in\mathbb{N}^n$ such that $P_{v_T}=\prod_{i=1}^n (Q_i^{n_i})_{v_T}=\bigcap_{i=1}^n (Q_i^{n_i})_{v_T}$. This implies that $P=\bigcap_{i=1}^n ((Q_i^{n_i})_{v_T}\cap H)$, and thus $P=(Q_j^{n_j})_{v_T}\cap H$ for some $j\in [1,n]$. Set $Q=Q_j$. We infer that $P=\sqrt[H]{(Q^{n_j})_{v_T}\cap H}=\sqrt[T]{(Q^{n_j})_{v_T}}\cap H=Q\cap H$.
\end{proof}

\begin{Le}\label{interprep} Let $H$ be a monoid, $\mathfrak{I}$ a finite non-empty set and $(L_I)_{I\in\mathfrak{I}}$ a family of non-empty $s$-ideals of $H$ such that $(L_I\cup L_J)_v=H$ for all distinct $I,J\in\mathfrak{I}$. Then $(\bigcap_{I\in\mathfrak{I}} L_I)_v=\bigcap_{I\in\mathfrak{I}} (L_I)_v$.
\end{Le}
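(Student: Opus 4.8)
The statement to prove is Lemma \ref{interprep}: for a finite non-empty family $(L_I)_{I\in\mathfrak I}$ of non-empty $s$-ideals of a monoid $H$ that are pairwise $v$-coprime (i.e.\ $(L_I\cup L_J)_v = H$ for $I\neq J$), we have $(\bigcap_{I\in\mathfrak I} L_I)_v = \bigcap_{I\in\mathfrak I} (L_I)_v$. The inclusion ``$\subseteq$'' is immediate: $\bigcap_I L_I \subseteq L_J \subseteq (L_J)_v$ for each $J$, and $\bigcap_I (L_I)_v$ is a $v$-ideal (an arbitrary intersection of $v$-ideals is a $v$-ideal), so applying the $v$-operation preserves this containment. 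So the real content is ``$\supseteq$''.

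For ``$\supseteq$'' I would argue by induction on $|\mathfrak I|$, with the base case $|\mathfrak I| = 1$ trivial. For the inductive step it suffices to handle $|\mathfrak I| = 2$ and then bootstrap: if $\mathfrak I = \{I_0\} \cup \mathfrak I'$ with $\mathfrak I'$ nonempty and smaller, set $L = \bigcap_{I\in\mathfrak I'} L_I$. One needs that $L$ is $v$-coprime to $L_{I_0}$, i.e.\ $(L \cup L_{I_0})_v = H$; this follows because $(L_I \cup L_{I_0})_v = H$ for each $I \in \mathfrak I'$ and a standard computation in the $v$-ideal monoid shows that being $v$-coprime to $L_{I_0}$ is preserved under finite $v$-intersection (equivalently $v$-products). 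Granting the two-element case, $(L \cap L_{I_0})_v = L_v \cap (L_{I_0})_v$, and by the inductive hypothesis $L_v = \bigcap_{I\in\mathfrak I'}(L_I)_v$, giving the result.

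So the crux is the two-ideal case: if $(A \cup B)_v = H$ then $(A \cap B)_v = A_v \cap B_v$. Here is how I expect it to go. Pass to $v$-ideals: since $A_v \cap B_v$ is a $v$-ideal it equals its own $v$-closure, so it suffices to show $A_v \cap B_v \subseteq (A \cap B)_v$. The hypothesis $(A\cup B)_v = H$ is equivalent to $(A_v \cup B_v)_v = H$, i.e.\ $A_v$ and $B_v$ are $v$-coprime, and by the theory of Krull-type $v$-multiplication one has $(A_v B_v)_v = A_v \cap B_v$ when they are $v$-coprime — but wait, this requires $v$-invertibility, which we do not have in general. The cleaner route avoiding invertibility: use that in any monoid, $(A\cap B)_v \supseteq (AB)_v$ always, and when $(A\cup B)_v = H$ one gets $A_v \cap B_v = (A_v \cap B_v)(A\cup B)_v{}^{\text{"}}$-type manipulation. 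Concretely, for $x \in A_v \cap B_v$, I want to produce a finite subset of $A \cap B$ whose $v$-closure contains $x$; the coprimality gives elements combining generators of $A$ and $B$ to recover $1$ up to $v$, and multiplying through by $x$ lands in $A \cap B$. The main obstacle I anticipate is precisely handling the lack of $v$-invertibility of the $L_I$ (they are merely $s$-ideals of an abstract monoid, not ideals of a Krull monoid) — so I would lean on the ``$\supseteq$''-direction of $(A\cap B)_v \supseteq (AB)_v$ together with the identity $\big((A\cap B)(A\cup B)\big)_v = \big((A\cap B)A \cup (A\cap B)B\big)_v \subseteq (AB)_v$, which combined with $(A\cup B)_v = H$ and the monotonicity/idempotence of the $v$-operation yields $(A\cap B)_v = \big((A\cap B)(A\cup B)\big)_v \subseteq (AB)_v \subseteq (A\cap B)_v$, forcing $(A\cap B)_v = (AB)_v$; then $A_v \cap B_v \subseteq$ the $v$-closure follows from $x A \subseteq A\cap B{}^{\text{"}}$ reasoning for $x \in B_v$ intersected symmetrically. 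I would double-check this chain carefully since the interplay of $\cup$, $\cap$, product, and the $v$-operation is where sign errors (here, inclusion-direction errors) creep in.
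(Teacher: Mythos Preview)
Your approach is correct, and at its core it coincides with the paper's --- but the paper compresses everything into a single chain of (in)equalities
\[
\bigcap_{I\in\mathfrak{I}} (L_I)_v=\Big(\prod_{I\in\mathfrak{I}} (L_I)_v\Big)_v=\Big(\prod_{I\in\mathfrak{I}} L_I\Big)_v\subseteq \Big(\bigcap_{I\in\mathfrak{I}} L_I\Big)_v\subseteq\bigcap_{I\in\mathfrak{I}} (L_I)_v,
\]
invoking as known the identity ``$v$-product $=$ intersection'' for pairwise $v$-coprime $v$-ideals, then using $\prod_I L_I\subseteq\bigcap_I L_I$ (each $L_I$ is an $s$-ideal) to close the loop. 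Your induction on $|\mathfrak I|$ and the two-ideal computation via $(A\cap B)(A\cup B)\subseteq AB$ are exactly how one \emph{proves} that identity, so you are supplying the details the paper suppresses rather than taking a different route. One cosmetic point: your final sentence about ``$xA\subseteq A\cap B$ reasoning'' is murkier than necessary. Once you have $(A\cap B)_v=(AB)_v$ for $v$-coprime $A,B$, simply apply the same identity to the $v$-ideals $A_v,B_v$ (which are still $v$-coprime) to get $A_v\cap B_v=(A_v\cap B_v)_v=(A_vB_v)_v=(AB)_v=(A\cap B)_v$; no separate elementwise argument is needed.
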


\begin{proof} Observe that $\bigcap_{I\in\mathfrak{I}} (L_I)_v=(\prod_{I\in\mathfrak{I}} (L_I)_v)_v=(\prod_{I\in\mathfrak{I}} L_I)_v\subseteq (\bigcap_{I\in\mathfrak{I}} L_I)_v\subseteq\bigcap_{I\in\mathfrak{I}} (L_I)_v$.
\end{proof}

The next result has some well-known analogue in the case of extension of Dedekind domains (e.g. see \cite[Proposition 2.10.2]{RR}).

\begin{Pro}\label{exponent} Let $T$ be a Krull monoid, $H\subseteq T$ a saturated submonoid, $I\in\mathcal{I}_v(H)^{\bullet}$, and $P\in\mathfrak{X}(H)$. Then $\mathrm{v}_P(I)=\max\{\lceil\frac{\mathrm{v}_Q(I_{v_T})}{\mathrm{v}_Q(P_{v_T})}\rceil\mid Q\in\mathfrak{X}(T),Q\cap H=P\}$.
\end{Pro}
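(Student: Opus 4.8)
The plan is to reduce everything to valuations in the big Krull monoid $T$ and then track how the $P$-adic valuation of an element of $H$ relates, via the finitely many primes $Q$ of $T$ lying over $P$, to the $Q$-adic valuations. First I would recall the basic valuation-theoretic setup: for a Krull monoid $T$, each $Q\in\mathfrak{X}(T)$ gives a discrete valuation $\mathrm{v}_Q$ on the quotient group, and for a divisorial ideal $I_{v_T}$ one has $I_{v_T}=\bigcap_{Q}(Q^{\mathrm{v}_Q(I_{v_T})})_{v_T}$ with only finitely many exponents nonzero. Since $H\subseteq T$ is saturated, $H$ is itself a Krull monoid, and by \cite[Proposition 2.4.2.3]{GHK} (as already used in Lemma \ref{heightonesub}) we have $I=I_{v_T}\cap H$ for every $I\in\mathcal{I}_v(H)$, and likewise $P=P_{v_T}\cap H$. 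By Lemma \ref{heightonesub} the set $\{Q\in\mathfrak{X}(T)\mid Q\cap H=P\}$ is finite and non-empty; call it $\mathfrak{Q}_P$, and note $\mathrm{v}_Q(P_{v_T})>0$ exactly for $Q\in\mathfrak{Q}_P$.

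The core of the argument is the following: for $x$ in the quotient group of $H$ (which I may view inside the quotient group of $T$), I claim
\[
\mathrm{v}_P(xH)=\max\Bigl\{\Bigl\lceil\tfrac{\mathrm{v}_Q(xT)}{\mathrm{v}_Q(P_{v_T})}\Bigr\rceil\;\Big|\;Q\in\mathfrak{Q}_P\Bigr\}
\]
for $x\in H^{\bullet}$, and then extend to arbitrary $I\in\mathcal{I}_v(H)^{\bullet}$ by writing $\mathrm{v}_P(I)=\min\{\mathrm{v}_P(xH)\mid x\in I\}$ and $\mathrm{v}_Q(I_{v_T})=\min\{\mathrm{v}_Q(xT)\mid x\in I\}$ and checking the min/max interchange works out (the delicate point being that the outer max ranges over the \emph{fixed finite set} $\mathfrak{Q}_P$, so one must argue there is a single $x\in I$ realizing the relevant minima simultaneously, or avoid this by a direct ideal-theoretic computation). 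For the principal case: $\mathrm{v}_P(xH)\geq n$ means $x\in(P^n)_{v_H}=(P_{v_T}^n)_{v_T}\cap H$, and since $x\in H$ already, this is equivalent to $x\in(P_{v_T}^n)_{v_T}$, i.e. $\mathrm{v}_Q(xT)\geq n\,\mathrm{v}_Q(P_{v_T})$ for all $Q\in\mathfrak{X}(T)$ — but $\mathrm{v}_Q(P_{v_T})=0$ unless $Q\in\mathfrak{Q}_P$, so the condition collapses to $\mathrm{v}_Q(xT)\geq n\,\mathrm{v}_Q(P_{v_T})$ for $Q\in\mathfrak{Q}_P$, equivalently $n\leq\lceil\mathrm{v}_Q(xT)/\mathrm{v}_Q(P_{v_T})\rceil$ for all such $Q$ (using that $n$ is an integer and $\mathrm{v}_Q(P_{v_T})\geq 1$), equivalently $n\leq\max_{Q\in\mathfrak{Q}_P}\lceil\cdots\rceil$. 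Taking the largest such $n$ gives the formula.

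To pass from the principal case to general $I\in\mathcal{I}_v(H)^{\bullet}$: write $I=\bigcap_{P'\in\mathfrak{X}(H)}(P'^{\mathrm{v}_{P'}(I)})_{v_H}$ and $I_{v_T}=\bigcap_{Q\in\mathfrak{X}(T)}(Q^{\mathrm{v}_Q(I_{v_T})})_{v_T}$; then $I=I_{v_T}\cap H$ gives $\mathrm{v}_{P'}(I)=\mathrm{v}_{P'}(I_{v_T}\cap H)$, and I would compute the latter by intersecting the formula from the principal case — more cleanly, use that $(P'^{m})_{v_H}=(P'^{m}_{v_T})_{v_T}\cap H$ and hence $\mathrm{v}_{P'}(I)\geq m \iff I\subseteq (P'^{m}_{v_T})_{v_T}\cap H \iff I_{v_T}\subseteq(P'^{m}_{v_T})_{v_T}$ (the last step because $I=I_{v_T}\cap H$ and $(P'^{m}_{v_T})_{v_T}$ is a $v_T$-ideal, so if $I_{v_T}\not\subseteq(P'^m_{v_T})_{v_T}$ one still needs the intersection with $H$ to detect this — here I would invoke $(I_{v_T}\cap H)_{v_T}=I_{v_T}$, which holds because $H\subseteq T$ is saturated hence $v_T$-dense in an appropriate sense, cf. \cite[Proposition 2.4.2]{GHK}). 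Granting $\mathrm{v}_{P}(I)\geq m\iff I_{v_T}\subseteq(P^m_{v_T})_{v_T}\iff \mathrm{v}_Q(I_{v_T})\geq m\,\mathrm{v}_Q(P_{v_T})$ for all $Q\in\mathfrak{Q}_P$, the same ceiling manipulation as above finishes it.

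The main obstacle I anticipate is the precise bookkeeping in the non-principal step — specifically, justifying that the intersection $I_{v_T}\cap H$ faithfully records all the $v_T$-exponents, i.e. that passing to $H$ and back does not lose information about which powers of $P_{v_T}$ contain $I_{v_T}$. This is exactly the content of the saturatedness hypothesis via \cite[Proposition 2.4.2]{GHK} (the same tool already deployed in Lemma \ref{heightonesub}), so the resolution is available; but one must be careful that the result cited gives $(I_{v_T}\cap H)_{v_T}=I_{v_T}$ and not merely $I_{v_T}\cap H\in\mathcal{I}_v(H)$. Everything else is the elementary ceiling-function identity $n\leq a/b \iff n\leq\lceil a/b\rceil$ for integers $n,a$ and positive integer $b$, combined with the finiteness of $\mathfrak{Q}_P$ from Lemma \ref{heightonesub}.
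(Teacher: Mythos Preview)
Your argument contains two genuine errors in the principal-ideal step. First, the identity $(P^n)_{v_H}=(P_{v_T}^n)_{v_T}\cap H$ is false in general --- only the inclusion $\supseteq$ follows from saturatedness --- and the related assertion ``$\mathrm{v}_Q(P_{v_T})=0$ unless $Q\in\mathfrak{Q}_P$'' also fails, because $Q\cap H$ need not lie in $\mathfrak{X}(H)\cup\{\emptyset\}$. For a concrete counterexample take $T=(\mathbb{N}_0^3,+)$ and the saturated submonoid $H=\{(a,b,c)\in T\mid a+b=2c\}$; then $P:=Q_1\cap H\in\mathfrak{X}(H)$, whereas $Q_3\cap H=H\setminus\{0\}$ is the maximal $s$-ideal. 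One computes $P_{v_T}=(Q_1Q_3)_{v_T}$, so $\mathrm{v}_{Q_3}(P_{v_T})=1>0$ although $Q_3\notin\mathfrak{Q}_P$, and the element $(2,0,1)\in H$ lies in $(P^2)_{v_H}$ (its $P$-valuation is $2$) but not in $(P^2)_{v_T}$ (its third coordinate is $1<2$). Second, even granting your identity, the quantifier manipulation is backwards: ``$\mathrm{v}_Q(xT)\geq n\,\mathrm{v}_Q(P_{v_T})$ for all $Q\in\mathfrak{Q}_P$'' is equivalent to $n\leq\min_{Q\in\mathfrak{Q}_P}\lfloor\mathrm{v}_Q(xT)/\mathrm{v}_Q(P_{v_T})\rfloor$, with a \emph{minimum} and a \emph{floor}, not a maximum and a ceiling. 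So your chain of equivalences, carried out correctly, would produce the wrong formula; that you land on the right one is the result of errors cancelling, not a valid derivation.

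The paper's proof is organized differently and avoids both issues by treating the two inequalities separately. For $m\leq\mathrm{v}_P(I)$ it expands $I\supseteq\prod_{L\in\mathfrak{X}(H)}L^{\mathrm{v}_L(I)}$, pushes this into $T$, and reads off $\mathrm{v}_Q(I_{v_T})\leq\mathrm{v}_Q(P_{v_T})\,\mathrm{v}_P(I)$ for each $Q$ with $Q\cap H=P$. For $\mathrm{v}_P(I)\leq m$ it isolates $I_P=\bigcap_{Q\cap H=P}(Q^{\mathrm{v}_Q(I_{v_T})})_{v_T}\cap H$ and shows $P^m\subseteq I_P$ --- here the max and ceiling arise for the right reason, since $m$ must dominate $\lceil\mathrm{v}_Q(I_{v_T})/\mathrm{v}_Q(P_{v_T})\rceil$ for \emph{every} $Q$ over $P$ in order that $P^m\subseteq(Q^{\mathrm{v}_Q(I_{v_T})})_{v_T}$ --- and then invokes Lemma~\ref{interprep} to reassemble $I$ from the pieces $(I_L)_L$.
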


\begin{proof} Without restriction let $I\subseteq P$. Set $m=\max\{\lceil\frac{\mathrm{v}_Q(I_{v_T})}{\mathrm{v}_Q(P_{v_T})}\rceil\mid Q\in\mathfrak{X}(T),Q\cap H=P\}$. First we show that $\mathrm{v}_P(I)\leq m$. Set $\mathcal{P}=\{Q\cap H\mid Q\in\mathfrak{X}(T),I\subseteq Q\}$. By Lemma \ref{heightonesub} we have $P\in\mathcal{P}$. For $L\in\mathcal{P}$ set $I_L=\bigcap_{Q\in\mathfrak{X}(T),Q\cap H=L} (Q^{\mathrm{v}_Q(I_{v_T})})_{v_T}\cap H$. Let $M\in\mathfrak{X}(T)$ be such that $M\cap H=P$. Note that $P^m\subseteq P^{\lceil\frac{\mathrm{v}_M(I_{v_T})}{\mathrm{v}_M(P_{v_T})}\rceil}\subseteq (P_{v_T})^{\lceil\frac{\mathrm{v}_M(I_{v_T})}{\mathrm{v}_M(P_{v_T})}\rceil}\subseteq ((M^{\mathrm{v}_M(P_{v_T})})_{v_T})^{\lceil\frac{\mathrm{v}_M(I_{v_T})}{\mathrm{v}_M(P_{v_T})}\rceil}\subseteq (M^{\mathrm{v}_M(P_{v_T})\lceil\frac{\mathrm{v}_M(I_{v_T})}{\mathrm{v}_M(P_{v_T})}\rceil})_{v_T}\subseteq (M^{\mathrm{v}_M(I_{v_T})})_{v_T}$. Therefore, $(P^m)_{v_H}\subseteq (I_P)_{v_H}$, and thus $\mathrm{v}_P((I_P)_{v_H})\leq m$.

Claim: For every $L\in\mathcal{P}$ there is some $a\in\mathbb{N}_0$ such that $I_L\supseteq L^a$. Let $L\in\mathcal{P}$. Set $a=\max\{\mathrm{v}_Q(I_{v_T})\mid Q\in\mathfrak{X}(T),Q\cap H=L\}$. Then $I_L\supseteq\bigcap_{Q\in\mathfrak{X}(T),Q\cap H=L} (Q^{\mathrm{v}_Q(I_{v_T})})\cap H\supseteq\bigcap_{Q\in\mathfrak{X}(T),Q\cap H=L} L^{\mathrm{v}_Q(I_{v_T})}=L^a$.

By the claim we have $\mathrm{v}_P((I_L)_{v_H})=0$ for all $L\in\mathcal{P}\setminus\{P\}$. Moreover, it follows by the claim that $(I_L\cup I_M)_{v_H}=H$ for all distinct $L,M\in\mathcal{P}$. Observe that $I_{v_T}=\bigcap_{Q\in\mathfrak{X}(T),I\subseteq Q} (Q^{\mathrm{v}_Q(I_{v_T})})_{v_T}$, hence $I=\bigcap_{Q\in\mathfrak{X}(T),I\subseteq Q} (Q^{\mathrm{v}_Q(I_{v_T})})_{v_T}\cap H=\bigcap_{L\in\mathcal{P}} I_L$. We infer by Lemma \ref{interprep} that $I=(\bigcap_{L\in\mathcal{P}} I_L)_{v_H}=\bigcap_{L\in\mathcal{P}} (I_L)_{v_H}$, hence $\mathrm{v}_P(I)=\max\{\mathrm{v}_P((I_L)_{v_H})\mid L\in\mathcal{P}\}=\mathrm{v}_P((I_P)_{v_H})\leq m$.

Next we show that $m\leq\mathrm{v}_P(I)$. We have $I=(\prod_{L\in\mathfrak{X}(H)} L^{\mathrm{v}_L(I)})_{v_H}\supseteq\prod_{L\in\mathfrak{X}(H)} L^{\mathrm{v}_L(I)}$, hence
\[
I_{v_T}\supseteq\Big(\prod_{L\in\mathfrak{X}(H)} L^{\mathrm{v}_L(I)}\Big)_{v_T}=\Big(\prod_{L\in\mathfrak{X}(H)} (L_{v_T})^{\mathrm{v}_L(I)}\Big)_{v_T}=\Big(\prod_{L\in\mathfrak{X}(H)} \Big(\Big(\prod_{M\in\mathfrak{X}(T)} M^{\mathrm{v}_M(L_{v_T})}\Big)_{v_T}\Big)^{\mathrm{v}_L(I)}\Big)_{v_T}=
\]
\[
\Big(\prod_{L\in\mathfrak{X}(H)}\prod_{M\in\mathfrak{X}(T)} M^{\mathrm{v}_M(L_{v_T})\mathrm{v}_L(I)}\Big)_{v_T}=\Big(\prod_{M\in\mathfrak{X}(T)}\prod_{L\in\mathfrak{X}(H)} M^{\mathrm{v}_M(L_{v_T})\mathrm{v}_L(I)}\Big)_{v_T}=
\]
\[
\Big(\prod_{M\in\mathfrak{X}(T)} M^{\sum_{L\in\mathfrak{X}(H)}\mathrm{v}_M(L_{v_T})\mathrm{v}_L(I)}\Big)_{v_T}.
\]
Let $Q\in\mathfrak{X}(T)$ be such that $Q\cap H=P$. Observe that $\mathrm{v}_Q(L_{v_T})=\delta_{L,P}\mathrm{v}_Q(P_{v_T})$ for all $L\in\mathfrak{X}(H)$. We infer that $\mathrm{v}_Q(I_{v_T})\leq\sum_{L\in\mathfrak{X}(H)}\mathrm{v}_Q(L_{v_T})\mathrm{v}_L(I)=\mathrm{v}_Q(P_{v_T})\mathrm{v}_P(I)$, and thus $\mathrm{v}_P(I)\geq\lceil\frac{\mathrm{v}_Q(I_{v_T})}{\mathrm{v}_Q(P_{v_T})}\rceil$.
\end{proof}

Let $T$ be a monadic submonoid of ${\rm Int}(R)$, and let $H$ be a monadic submonoid of ${\rm Int}(R)$ that is contained in $T$. Suppose that we know how $v$-product decompositions of principal ideals (generated by atoms) in $T$ look like. Then Proposition \ref{exponent} enables us to derive all $v$-product decompositions of principal ideals (generated by atoms) in $H$.

\begin{Rem}\label{charnonemptsect} Let $R$ be a factorial domain, $f\in {\rm Int}(R)^{\bullet}$ and $\mathcal{R}$ a system of representatives of $\mathcal{A}([\![f]\!])\cap R$. Then $\{P\in\mathfrak{X}([\![f]\!])\mid P\cap R\not=\emptyset\}=\dot\bigcup_{p\in\mathcal{R}}\{P\in\mathfrak{X}([\![f]\!])\mid p\in P\}$.
\end{Rem}

\begin{proof} It is straightforward to prove the equality. That the union is a disjoint union follows from Lemma \ref{general}.4.
\end{proof}

\begin{Co}\label{exponent1} Let $R$ be a factorial domain, $f\in {\rm Int}(R)^{\bullet}$, $g\in [\![f]\!]$, $I\in\mathcal{I}_v([\![g]\!])^{\bullet}$ and $P\in\mathfrak{X}([\![g]\!])$ such that $P\cap R\not=\emptyset$. Then $\mathrm{v}_P(I)=\max\{\mathrm{v}_Q(I_{v_{[\![f]\!]}})\mid Q\in\mathfrak{X}([\![f]\!]),Q\cap [\![g]\!]=P\}$.
\end{Co}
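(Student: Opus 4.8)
The plan is to derive Corollary~\ref{exponent1} directly from Proposition~\ref{exponent} applied to the Krull monoid $T=[\![f]\!]$ and its saturated submonoid $H=[\![g]\!]$. Recall that $[\![g]\!]\subseteq [\![f]\!]$ since $g\in [\![f]\!]$ (every monadic submonoid contained in $[\![g]\!]$ is divisor-closed, hence saturated, in $[\![f]\!]$), and that $[\![f]\!]$ is a Krull monoid by \cite[Theorem 5.2]{R}. So Proposition~\ref{exponent} gives $\mathrm{v}_P(I)=\max\{\lceil\frac{\mathrm{v}_Q(I_{v_{[\![f]\!]}})}{\mathrm{v}_Q(P_{v_{[\![f]\!]}})}\rceil\mid Q\in\mathfrak{X}([\![f]\!]),Q\cap [\![g]\!]=P\}$. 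The whole content of the corollary is therefore the claim that, under the extra hypothesis $P\cap R\not=\emptyset$, the ``ramification indices'' $\mathrm{v}_Q(P_{v_{[\![f]\!]}})$ are all equal to $1$ for every $Q\in\mathfrak{X}([\![f]\!])$ with $Q\cap [\![g]\!]=P$; then the ceiling operations disappear and the formula collapses to the stated one.

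The key step, then, is to show $\mathrm{v}_Q(P_{v_{[\![f]\!]}})=1$ whenever $P\in\mathfrak{X}([\![g]\!])$ meets $R$ and $Q\in\mathfrak{X}([\![f]\!])$ lies over $P$. The route I would take: since $P\cap R\ne\emptyset$, pick a constant atom $p\in P\cap\mathcal{A}(R)$ (using Lemma~\ref{general}.2, any constant element of $[\![g]\!]$ is divisible by some $u\in\mathcal{A}(R)$ with $u\mid_R d(g)$, and such $u$ lies in $P$ since $P$ is prime). By Lemma~\ref{general}.3, $p[\![g]\!]$ is a radical ideal of $[\![g]\!]$, and by Lemma~\ref{general}.4 the prime $P$ is the \emph{only} height-one prime of $[\![g]\!]$ containing $p$ among those... actually more precisely $p[\![g]\!]=(\prod_{M\in\mathfrak{X}([\![g]\!]),p\in M}M)_v$ with each $M$ occurring to the first power (radicality), so $\mathrm{v}_P(p[\![g]\!])=1$. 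The same reasoning applied inside $[\![f]\!]$: $p$ is also a constant atom of $[\![f]\!]$ (Lemma~\ref{general}.2 again, since $u\mid_R d(g)\mid_R d(f)$), so $p[\![f]\!]$ is radical, i.e. $\mathrm{v}_Q(p[\![f]\!])\in\{0,1\}$ for every $Q\in\mathfrak{X}([\![f]\!])$. Now use the standard relation between $P$-adic valuations under a saturated extension: for $Q$ over $P$ one has $\mathrm{v}_Q((p[\![g]\!])_{v_{[\![f]\!]}})=\mathrm{v}_Q(p[\![f]\!])$ (the $v_{[\![f]\!]}$-closure of $p[\![g]\!]$ in $[\![f]\!]$ is just $p[\![f]\!]$, as $p$ is principal in both) and on the other hand $P_{v_{[\![f]\!]}}$ divides $(p[\![g]\!])_{v_{[\![f]\!]}}$ with the $Q$-exponent of the quotient controlled by $\mathrm{v}_P(p[\![g]\!])=1$; chasing the identity $\mathrm{v}_Q(I_{v_{[\![f]\!]}})=\sum_{M\in\mathfrak{X}([\![g]\!])}\mathrm{v}_Q(M_{v_{[\![f]\!]}})\mathrm{v}_M(I)$ (which is exactly the computation carried out in the second half of the proof of Proposition~\ref{exponent}) with $I=p[\![g]\!]$ yields $1=\mathrm{v}_Q(p[\![f]\!])=\mathrm{v}_Q(P_{v_{[\![f]\!]}})\cdot\mathrm{v}_P(p[\![g]\!])=\mathrm{v}_Q(P_{v_{[\![f]\!]}})$, and since $Q$ does lie over $P$ (so $p\in Q$, forcing $\mathrm{v}_Q(p[\![f]\!])=1$ rather than $0$), we get $\mathrm{v}_Q(P_{v_{[\![f]\!]}})=1$.

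With that in hand the corollary is immediate: $\lceil \mathrm{v}_Q(I_{v_{[\![f]\!]}})/1\rceil=\mathrm{v}_Q(I_{v_{[\![f]\!]}})$, so Proposition~\ref{exponent} reads $\mathrm{v}_P(I)=\max\{\mathrm{v}_Q(I_{v_{[\![f]\!]}})\mid Q\in\mathfrak{X}([\![f]\!]),Q\cap [\![g]\!]=P\}$, which is the assertion. I expect the main obstacle to be bookkeeping around the closure operators: one must be careful that ``$\mathfrak{X}([\![f]\!])\ni Q$ lies over $P$'' genuinely forces $p\in Q$ (this follows because $Q\cap[\![g]\!]=P\ni p$), and that the extension-of-valuations identity used is legitimately the one established within the proof of Proposition~\ref{exponent} rather than something needing independent justification; invoking \cite[Proposition 2.4.2.3]{GHK} and Lemma~\ref{heightonesub} as in that proof keeps everything on firm ground. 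An even shorter packaging is possible: apply Proposition~\ref{exponent} to $I=P$ itself, noting $\mathrm{v}_P(P)=1$, to read off directly that $\max\{\lceil \mathrm{v}_Q(P_{v_{[\![f]\!]}})/\mathrm{v}_Q(P_{v_{[\![f]\!]}})\rceil\}=1$ is consistent only if all the $\mathrm{v}_Q(P_{v_{[\![f]\!]}})$ equal $1$ once one also knows (from radicality of $p[\![f]\!]$) that none of them exceeds $\mathrm{v}_Q(p[\![f]\!])=1$; I would present whichever of these two phrasings turns out cleanest after checking the details.
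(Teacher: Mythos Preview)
Your approach is essentially the paper's: reduce via Proposition~\ref{exponent} to showing $\mathrm{v}_Q(P_{v_{[\![f]\!]}})=1$ for every $Q\in\mathfrak{X}([\![f]\!])$ over $P$, then exploit a constant atom $p\in P\cap\mathcal{A}(R)$ together with the radicality of $p[\![f]\!]$ (Lemma~\ref{general}.3). The paper packages the last step more directly than you do: from $p\in P$ and $p\notin (Q^2)_{v_{[\![f]\!]}}$ (radicality) one gets $P\not\subseteq (Q^2)_{v_{[\![f]\!]}}$, hence $\mathrm{v}_Q(P_{v_{[\![f]\!]}})\le 1$, and $P\subseteq Q$ gives the reverse inequality.

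One caution about your first route: what you call ``the identity $\mathrm{v}_Q(I_{v_{[\![f]\!]}})=\sum_{M}\mathrm{v}_Q(M_{v_{[\![f]\!]}})\mathrm{v}_M(I)$'' is, in the proof of Proposition~\ref{exponent}, only established as the inequality $\mathrm{v}_Q(I_{v_{[\![f]\!]}})\le\sum_{M}\mathrm{v}_Q(M_{v_{[\![f]\!]}})\mathrm{v}_M(I)$. Plugging in $I=p[\![g]\!]$ therefore yields $1\le \mathrm{v}_Q(P_{v_{[\![f]\!]}})$, not equality, so this chain alone does not close. Your ``shorter packaging'' does close it, because the observation ``none of the $\mathrm{v}_Q(P_{v_{[\![f]\!]}})$ exceeds $\mathrm{v}_Q(p[\![f]\!])=1$'' is exactly the containment $p[\![f]\!]\subseteq P_{v_{[\![f]\!]}}$ combined with radicality---and that is precisely the paper's argument. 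So keep the second phrasing and drop the appeal to the sum formula as an equality.
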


\begin{proof} Because of Proposition \ref{exponent} it suffices to show that $\mathrm{v}_Q(P_{v_{[\![f]\!]}})=1$ for all $Q\in\mathfrak{X}([\![f]\!])$ such that $Q\cap [\![g]\!]=P$. Let $Q\in\mathfrak{X}([\![f]\!])$ be such that $Q\cap [\![g]\!]=P$. By Remark \ref{charnonemptsect} there is some $u\in\mathcal{A}([\![g]\!])\cap P\cap R$. Note that $u\in\mathcal{A}([\![f]\!])\cap Q\cap R$. It follows by Lemma \ref{general}.3 that $u[\![f]\!]$ is a radical ideal of $[\![f]\!]$. Therefore, $u\not\in (Q^2)_{v_{[\![f]\!]}}$, hence $P\nsubseteq (Q^2)_{v_{[\![f]\!]}}$, and thus $\mathrm{v}_Q(P_{v_{[\![f]\!]}})=1$.
\end{proof}

In the last part of this section we describe the structure of height-one prime ideals that contain constant elements. We make an ad hoc definition to state the following results more easily. Let $R$ be a factorial domain, $f\in {\rm Int}(R)^{\bullet}$, $a\in [\![f]\!]\cap R$, and $A\subseteq B\subseteq [\![f]\!]$. We say that $A$ is $a$-compatible if there is some $w\in [\![f]\!]$ such that $a\mid_R\frac{d(uw)}{d(w)}$ for all $u\in A$. Moreover, $A$ is called maximal $a$-compatible in $B$ if $A$ is maximal (with respect to inclusion) among the $a$-compatible subsets of $B$.

\begin{Le}\label{compatible1} Let $R$ be a factorial domain, $f\in {\rm Int}(R)^{\bullet}$, $\mathcal{S}$ a system of representatives of $\mathcal{A}([\![f]\!])\setminus R$, $p$ a constant atom of $[\![f]\!]$, and $A\subseteq [\![f]\!]$.
\begin{itemize}
\item[\textnormal{\textbf{1.}}] $A$ is $p$-compatible if and only if $(A\cup\{p\})_{v_{[\![f]\!]}}\subsetneq [\![f]\!]$.
\item[\textnormal{\textbf{2.}}] $\{P\in\mathfrak{X}([\![f]\!])\mid p\in P\}=\{(Q\cup\{p\})[\![f]\!]\mid Q\subseteq\mathcal{S},Q$ is maximal $p$-compatible in $\mathcal{S}\}$.
\end{itemize}
\end{Le}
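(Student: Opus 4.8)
The plan is to handle the two items in tandem, using item~1 to reduce item~2 to a statement about maximal $p$-compatible sets, and relying heavily on the fact that $[\![f]\!]$ is a Krull monoid with finitely many height-one primes (so that prime $s$-ideals are exactly finite unions of height-one primes, and every divisorial $v$-ideal has a finite $v$-factorization).

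For item~1, I would unfold the definition of $p$-compatibility. The key translation is between divisibility by $p$ inside $[\![f]\!]$ and the image-content: by Lemma~\ref{general}.1, for $w,u\in[\![f]\!]$ one has $p\mid_{[\![f]\!]}uw$ iff $p\mid_R d(uw)$, and similarly $p\mid_{[\![f]\!]}w$ iff $p\mid_R d(w)$. So $p\mid_R \frac{d(uw)}{d(w)}$ is equivalent to saying $\frac{uw}{d(w)}$ (which lies in $[\![f]\!]$ after dividing out $d(w)$) is still divisible by $p$ in $[\![f]\!]$. Thus ``$A$ is $p$-compatible'' means there is $w\in[\![f]\!]$ such that $p$ divides (in $[\![f]\!]$) the element $u\cdot\frac{w}{d(w)}$ for every $u\in A$; after renaming $w'=\frac{w}{d(w)}$ (note $d(w')=1$, and $w'\in[\![f]\!]$) this says: there is $w'\in[\![f]\!]$ with $d(w')=1$ and $p\mid_R d(uw')$ for all $u\in A$. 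Now I want to connect this to $(A\cup\{p\})_{v}\subsetneq[\![f]\!]$. Recall that $(A\cup\{p\})_v=[\![f]\!]$ iff $(A\cup\{p\})^{-1}=[\![f]\!]$, i.e. iff there is \emph{no} nonzero element $z$ in the quotient group with $zA\subseteq[\![f]\!]$ and $zp\in[\![f]\!]$ other than those already in $[\![f]\!]$... more precisely $(A\cup\{p\})_v\subsetneq[\![f]\!]$ iff $\frac1p(A\cup\{p\})^{-1}$ is strictly larger, i.e. iff there is $z$ in the quotient group with $z\notin[\![f]\!]$, $zu\in[\![f]\!]$ for all $u\in A$, and $zp\in[\![f]\!]$; writing $z=\frac{1}{p}w'$ with $w'=pz\in[\![f]\!]$, the condition $z\notin[\![f]\!]$ says $p\nmid_R d(w')$, while $zu\in[\![f]\!]$ says $p\mid_R d(uw')$ for all $u\in A$. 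This is not yet literally the $p$-compatibility condition because I also need $d(w')=1$; but one can pass from a general $w'$ with $p\nmid d(w')$ to one with $d(w')=1$ by replacing $w'$ with $\frac{w'}{d(w')}$, which does not change whether $p\mid_R\frac{d(uw')}{d(w')}$ (using $d(w')$ and $p$ being coprime in $R$). So the two conditions match up, giving item~1.

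For item~2, given $P\in\mathfrak{X}([\![f]\!])$ with $p\in P$, set $Q=P\cap\mathcal{S}$. Since $[\![f]\!]$ is a Krull monoid every height-one prime is generated (as an $s$-ideal) by the atoms it contains, and by Lemma~\ref{general}.4 it contains at most one constant atom up to associates, namely $p$; hence $P=(Q\cup\{p\})[\![f]\!]$ as an $s$-ideal. Because $P\subsetneq[\![f]\!]$, item~1 shows $Q$ is $p$-compatible; and maximality of $Q$ among $p$-compatible subsets of $\mathcal{S}$ follows because any strictly larger $p$-compatible $Q'\subseteq\mathcal{S}$ would, again by item~1, give $(Q'\cup\{p\})[\![f]\!]\subsetneq[\![f]\!]$, a prime $s$-ideal (it is a union of some height-one primes) strictly containing the height-one prime $P$ only if... — here I must argue that it is strictly \emph{smaller} or equal, contradicting the extra atom. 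Conversely, given $Q\subseteq\mathcal{S}$ maximal $p$-compatible, item~1 gives that $(Q\cup\{p\})[\![f]\!]$ is a proper $s$-ideal, so it is contained in some $P\in\mathfrak{X}([\![f]\!])$; then $p\in P$, and $P\cap\mathcal{S}$ is a $p$-compatible subset of $\mathcal{S}$ containing $Q$, so by maximality $P\cap\mathcal{S}=Q$, whence $P=(Q\cup\{p\})[\![f]\!]$.

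\textbf{Expected main obstacle.} The delicate point is the bookkeeping in item~1: translating the $v$-ideal condition $(A\cup\{p\})_v\subsetneq[\![f]\!]$ into the existence of a witness $w$ with the precise normalization $d(w)=1$, and checking carefully that the passage $w\rightsquigarrow \frac{w}{d(w)}$ is legitimate in both directions (this uses that $p\in\mathcal{A}(R)$, $R$ factorial, and $d$ is multiplicative up to associates on constants). Once item~1 is established cleanly, item~2 is a fairly routine Krull-monoid argument built on Lemmas~\ref{general}.3--\ref{general}.4 and the finiteness of $\mathfrak{X}([\![f]\!])$; the only care needed there is the uniqueness-of-constant-atom step and the maximality-versus-containment correspondence.
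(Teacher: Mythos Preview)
Your approach is essentially the paper's. Two small corrections will make it go through cleanly.

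\textbf{Item 1.} The detour through ``$d(w')=1$'' is unnecessary. Once you have $w'=pz\in[\![f]\!]$ with $p\nmid_R d(w')$ and $p\mid_R d(uw')$ for all $u\in A$, you are already done: since $p$ is prime in $R$ and $p\nmid_R d(w')$, the condition $p\mid_R d(uw')$ is equivalent to $p\mid_R \frac{d(uw')}{d(w')}$, which is literally the $p$-compatibility condition for the witness $w'$. The paper argues exactly this way and never normalizes $d(w')$ to $1$.

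\textbf{Item 2, maximality in ``$\subseteq$''.} Your claim that $(Q'\cup\{p\})[\![f]\!]$ is a prime $s$-ideal is false in general; a union of height-one primes is radical, not prime, and there is no reason this particular $s$-ideal equals such a union. The correct move---which you already use in the converse direction---is: item~1 gives $(Q'\cup\{p\})_{v}\subsetneq[\![f]\!]$, hence (since $[\![f]\!]$ is Krull) $Q'\cup\{p\}$ is contained in some $M\in\mathfrak{X}([\![f]\!])$. Then $P=(Q\cup\{p\})[\![f]\!]\subseteq(Q'\cup\{p\})[\![f]\!]\subseteq M$, so $P=M$ by minimality, whence $Q'\subseteq M\cap\mathcal{S}=P\cap\mathcal{S}=Q$, contradicting $Q\subsetneq Q'$. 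This is exactly what the paper does.
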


\begin{proof} $\textbf{1.}$ Let $L$ be a quotient group of $[\![f]\!]$. First let $A$ be $p$-compatible. Then there is some $w\in [\![f]\!]$ such that $p\mid_R\frac{d(uw)}{d(w)}$ for all $u\in A$. Set $z=\frac{w}{pd(w)}$. Observe that $z\in L$. It follows by Lemma \ref{general}.1 that $zu=\frac{uw}{pd(w)}\in [\![f]\!]$ for all $u\in A$. Therefore, $z(A\cup\{p\})\subseteq [\![f]\!]$. Suppose that $z\in [\![f]\!]$. Then $\frac{1}{p}=\frac{d(w)}{pd(w)}\simeq_R d(z)\in R$, a contradiction. Consequently, $z\not\in [\![f]\!]$. We infer that $[\![f]\!]\subsetneq ([\![f]\!]:_L A\cup\{p\})$, and thus $(A\cup\{p\})_{v_{[\![f]\!]}}\subsetneq [\![f]\!]$.

Next let $(A\cup\{p\})_{v_{[\![f]\!]}}\subsetneq [\![f]\!]$. There is some $z\in L\setminus [\![f]\!]$ such that $z(A\cup\{p\})\subseteq [\![f]\!]$. Set $w=zp$. It is clear that $w\in [\![f]\!]$. Let $u\in A$. Then $\frac{wu}{p}\in [\![f]\!]$, hence $p\mid_R d(wu)$ by Lemma \ref{general}.1. Since $z\not\in [\![f]\!]$ it follows by Lemma \ref{general}.1 that $p\nmid_R d(w)$, and thus $p\mid_R\frac{d(wu)}{d(w)}$.

$\textbf{2.}$ ``$\subseteq$'': Let $P\in\mathfrak{X}([\![f]\!])$ be such that $p\in P$. Set $Q=\mathcal{S}\cap P$. It is obvious that $(Q\cup\{p\})[\![f]\!]\subseteq P$. Let $x\in P$. There is some $u\in\mathcal{A}([\![f]\!])$ such that $x\in u[\![f]\!]$.

Case 1. $u\in R$: It follows by Lemma \ref{general}.4 that $u\simeq_{[\![f]\!]} p$, and thus $x\in p[\![f]\!]\subseteq (Q\cup\{p\})[\![f]\!]$.

Case 2. $u\not\in R$: There is some $z\in\mathcal{S}$ such that $u\simeq_{[\![f]\!]} z$, hence $x\in z[\![f]\!]\subseteq (Q\cup\{p\})[\![f]\!]$.

Consequently, $P=(Q\cup\{p\})[\![f]\!]$. We have $(Q\cup\{p\})_{v_{[\![f]\!]}}\subseteq P_{v_{[\![f]\!]}}=P$, hence $Q$ is $p$-compatible by 1. Let $Q^{\prime}\subseteq\mathcal{S}$ be $p$-compatible such that $Q\subseteq Q^{\prime}$. It follows by 1 that there is some $M\in\mathfrak{X}([\![f]\!])$ such that $Q^{\prime}\cup\{p\}\subseteq M$. We infer that $P=(Q\cup\{p\})[\![f]\!]\subseteq (Q^{\prime}\cup\{p\})[\![f]\!]\subseteq M$, and thus $M=P$. Therefore, $Q^{\prime}\subseteq\mathcal{S}\cap P=Q$. This shows that $Q$ is maximal $p$-compatible in $\mathcal{S}$.

``$\supseteq$'': Let $Q\subseteq\mathcal{S}$ be maximal $p$-compatible in $\mathcal{S}$. Set $P=(Q\cup\{p\})[\![f]\!]$. It is clear that $p\in P$. It follows by 1 that $P\subseteq M$ for some $M\in\mathfrak{X}([\![f]\!])$. Assume that $P\subsetneq M$. There is some $u\in\mathcal{A}([\![f]\!])$ such that $u\in M\setminus P$. It follows by Lemma \ref{general}.4 that $u\not\in R$. Consequently, there is some $w\in\mathcal{S}$ such that $u\simeq_{[\![f]\!]} w$. Set $Q^{\prime}=Q\cup\{w\}$. It is clear that $Q^{\prime}\subseteq M$, hence $Q^{\prime}$ is $p$-compatible in $\mathcal{S}$ by 1. It follows that $w\in Q^{\prime}=Q\subseteq P$, and thus $u\in P$, a contradiction. We infer that $P=M\in\mathfrak{X}([\![f]\!])$.
\end{proof}

\begin{Rem}\label{compatible2} Let $R$ be a factorial domain, $K$ a field of quotients of $R$, $X$ an indeterminate over $K$, $f\in R[X]^{\bullet}$, $a\in [\![f]\!]\cap R$, and $A\subseteq [\![f]\!]$. Then $A$ is $a$-compatible if and only if there is some primitive $g\in [\![f]\!]\cap R[X]$ such that $a\mid_R\frac{d(ug)}{d(g)}$ for all $u\in A$.
\end{Rem}

\begin{proof} First let $A$ be $a$-compatible. Then there is some $w\in [\![f]\!]$ such that $a\mid_R\frac{d(uw)}{d(w)}$ for all $u\in A$. By Lemma \ref{represent} there are some $b,c\in [\![f]\!]\cap R$ and some primitive $g\in [\![f]\!]\cap R[X]$ such that $w=\frac{bg}{c}$. We infer that $a\mid_R\frac{d(uw)}{d(w)}=\frac{d(ug)}{d(g)}$ for all $u\in A$. The converse is trivially satisfied.
\end{proof}

\section{Construction of divisor-class groups}

In this section we present a few methods which can be used to construct ``more complicated'' divisor-class groups. We start with a few preparatory results.

\begin{Le}\label{preparation} Let $R$ be a factorial domain, $K$ a quotient field of $R$, $X$ an indeterminate over $K$ and $f,g\in R[X]^{\bullet}$ such that ${\rm GCD}_{R[X]}(f,g)=R[X]^{\times}$ and $d(rs)=d(r)d(s)$ for all $r\in [\![f]\!]$ and $s\in [\![g]\!]$. Then $[\![fg]\!]=\{xy\mid x\in [\![f]\!],y\in [\![g]\!]\}$.
\end{Le}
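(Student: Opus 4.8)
The plan is to prove the two inclusions $\{xy\mid x\in [\![f]\!],y\in [\![g]\!]\}\subseteq [\![fg]\!]$ and $[\![fg]\!]\subseteq\{xy\mid x\in [\![f]\!],y\in [\![g]\!]\}$ separately, with the first being routine and the second requiring the coprimality hypothesis together with the multiplicativity of $d$.

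For the easy inclusion, let $x\in [\![f]\!]$ and $y\in [\![g]\!]$. Then $x\mid_{{\rm Int}(R)}f^k$ and $y\mid_{{\rm Int}(R)}g^l$ for some $k,l\in\mathbb{N}_0$, so $xy\mid_{{\rm Int}(R)}f^kg^l\mid_{{\rm Int}(R)}(fg)^{\max(k,l)}$, hence $xy\in [\![fg]\!]$.

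For the hard inclusion, take $h\in [\![fg]\!]$, so $h\mid_{{\rm Int}(R)}(fg)^k=f^kg^k$ for some $k\in\mathbb{N}$. First I would apply Lemma \ref{represent} (to the generator $fg\in R[X]^{\bullet}$) to write $h=\frac{bt}{a}$ with $a,b\in [\![fg]\!]\cap R$, ${\rm GCD}_R(a,b)=R^{\times}$, and $t\in [\![fg]\!]\cap R[X]$ primitive with $t\mid_{R[X]}(fg)^m$ for some $m$. Since $R[X]$ is factorial and ${\rm GCD}_{R[X]}(f,g)=R[X]^\times$, the primitive divisor $t$ of $f^mg^m$ factors as $t\simeq_{R[X]} t_1t_2$ where $t_1\mid_{R[X]}f^m$ and $t_2\mid_{R[X]}g^m$ (split $t$ according to which of $f,g$ each irreducible factor divides; this uses that $f,g$ share no irreducible factor). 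Both $t_1,t_2$ are primitive, so $t_1\in [\![f]\!]$ and $t_2\in [\![g]\!]$. The plan is then to distribute the constant part $\frac{b}{a}$ appropriately. The key point is that $d(t)=d(t_1)d(t_2)$ by the multiplicativity hypothesis (applied with $r=t_1\in [\![f]\!]$, $s=t_2\in [\![g]\!]$), so $\frac{t}{d(t)}=\frac{t_1}{d(t_1)}\cdot\frac{t_2}{d(t_2)}$; combined with $a\mid_R d(t)$ (which holds since $\frac{bt}{a}=h\in{\rm Int}(R)$ forces $a\mid_R d(bt)\simeq_R b\,d(t)$ and ${\rm GCD}_R(a,b)=R^\times$), one can write $\frac{1}{a}=\frac{1}{a_1}\cdot\frac{1}{a_2}$ with $a_1\mid_R d(t_1)$, $a_2\mid_R d(t_2)$ in $R$, and then set $x=\frac{b t_1}{a_1}$, $y=\frac{t_2}{a_2}$ (or distribute $b$ as convenient). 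By Lemma \ref{general}.1 these lie in ${\rm Int}(R)$, and since $t_1\mid_{R[X]}f^m$ one checks $x\mid_{{\rm Int}(R)}f^{m+1}$ (absorbing the constants $b,a_1$, which divide $d(t_1)$ hence powers of $d(f)$), so $x\in [\![f]\!]$, and similarly $y\in [\![g]\!]$, with $h=xy$.

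The main obstacle I anticipate is the bookkeeping in the last step: verifying that the constants $a,b$ can genuinely be split so that each piece lands in the correct monadic submonoid. One must be careful that $a_i\mid_R d(t_i)$ — which follows from $a\mid_R d(t)=d(t_1)d(t_2)$, unique factorization in $R$, and choosing $a_i=\gcd(a,d(t_i)^\infty)$-type factors — and that the resulting $x$ really divides a power of $f$ in ${\rm Int}(R)$ rather than merely in $K[X]$; here one uses that every constant dividing $d(f)$ lies in $[\![f]\!]$ (Lemma \ref{general}.2) and that $d(f^j)=d(f)^j$, so the constant factor is harmless. The coprimality of $f$ and $g$ is used exactly once, to split the polynomial part $t$, and the multiplicativity of $d$ on $[\![f]\!]\times [\![g]\!]$ is used exactly once, to ensure $d(t)$ splits compatibly so no spurious constant content is created when recombining.
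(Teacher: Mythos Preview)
Your approach is essentially the paper's: apply Lemma~\ref{represent} to write an element of $[\![fg]\!]$ as $\frac{bt}{a}$, split the primitive polynomial $t=t_1t_2$ along the irreducible factors of $f$ and $g$ using ${\rm GCD}_{R[X]}(f,g)=R[X]^\times$, and then split the constant numerator and denominator compatibly using the hypothesis $d(rs)=d(r)d(s)$.

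The one imprecision is your treatment of $b$. Setting $x=\frac{bt_1}{a_1}$ and claiming $b$ divides a power of $d(f)$ is not justified: $b\in [\![fg]\!]\cap R$ only gives $b\mid_R d((fg)^l)$, and $d((fg)^l)=d(f)^l d(g)^l$ may contain prime factors from $d(g)$ that do not divide any power of $d(f)$. Your parenthetical ``or distribute $b$ as convenient'' is exactly what is needed, and the paper carries this out: from $b\mid_R d(f)^l d(g)^l$ (a second use of the multiplicativity hypothesis, applied to $r=f^l$, $s=g^l$) one factors $b=b'b''$ with $b'\mid_R d(f)^l$ and $b''\mid_R d(g)^l$, hence $b'\in [\![f]\!]$ and $b''\in [\![g]\!]$ by Lemma~\ref{general}. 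Then $x=\frac{b't_1}{a_1}\in [\![f]\!]$ and $y=\frac{b''t_2}{a_2}\in [\![g]\!]$ work. So the multiplicativity assumption is used twice, not once as you state: once to split $a$ via $d(t)=d(t_1)d(t_2)$, and once to split $b$ via $d(fg)=d(f)d(g)$.
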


\begin{proof} ``$\subseteq$'': Let $z\in [\![fg]\!]$. By Lemma \ref{represent} there are some $a,b\in [\![fg]\!]\cap R$ and some primitive $h\in [\![fg]\!]\cap R[X]$ such that $h\mid_{R[X]} f^kg^k$ for some $k\in\mathbb{N}$, ${\rm GCD}_R(a,b)=R^{\times}$ and $z=\frac{bh}{a}$. Since $R[X]$ is factorial, there are some $h^{\prime},h^{\prime\prime}\in R[X]$ such that $h=h^{\prime}h^{\prime\prime}$, $h^{\prime}\mid_{R[X]} f^k$ and $h^{\prime\prime}\mid_{R[X]} g^k$. Consequently, $h^{\prime}\in [\![f]\!]$ and $h^{\prime\prime}\in [\![g]\!]$. We infer that $a\mid_R d(h)=d(h^{\prime})d(h^{\prime\prime})$, and thus there are some $a^{\prime},a^{\prime\prime}\in R$ such that $a=a^{\prime}a^{\prime\prime}$, $a^{\prime}\mid_R d(h^{\prime})$ and $a^{\prime\prime}\mid_R d(h^{\prime\prime})$. Observe that $a^{\prime}\in [\![f]\!]$ and $a^{\prime\prime}\in [\![g]\!]$. Moreover, $b\mid_R d((fg)^l)=d(f)^ld(g)^l$ for some $l\in\mathbb{N}$, and thus there are some $b^{\prime},b^{\prime\prime}\in R$ such that $b=b^{\prime}b^{\prime\prime}$, $b^{\prime}\mid_R d(f)^l$ and $b^{\prime\prime}\mid_R d(g)^l$. We infer that $b^{\prime}\in [\![f]\!]$ and $b^{\prime\prime}\in [\![g]\!]$. Set $x=\frac{b^{\prime}h^{\prime}}{a^{\prime}}$ and $y=\frac{b^{\prime\prime}h^{\prime\prime}}{a^{\prime\prime}}$. Note that $z=xy$. Since $a^{\prime}\mid_R d(h^{\prime})$ we have $x\in {\rm Int}(R)$. Obviously, $xa^{\prime}=b^{\prime}h^{\prime}\in [\![f]\!]$, and thus $x\in [\![f]\!]$. Analogously, it follows that $y\in [\![g]\!]$. ``$\supseteq$'': Trivial.
\end{proof}

\begin{Le}\label{preparation1} Let $R$ be a factorial domain, $K$ a quotient field of $R$, $X$ an indeterminate over $K$ and $f,g\in {\rm Int}(R)^{\bullet}$ such that ${\rm GCD}_{K[X]}(f,g)=K[X]^{\times}$ and ${\rm GCD}_R(f(x),g(x))=R^{\times}$ for all but finitely many $x\in R$. If $h\in [\![f]\!]$, then $[\![f]\!]\cap [\![hg]\!]=[\![h]\!]$.
\end{Le}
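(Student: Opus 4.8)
The inclusion $[\![h]\!] \subseteq [\![f]\!] \cap [\![hg]\!]$ is the easy direction: $h \in [\![f]\!]$ by hypothesis and $h \mid_{{\rm Int}(R)} (hg)^1$, so $h \in [\![hg]\!]$; since both $[\![f]\!]$ and $[\![hg]\!]$ are divisor-closed, $[\![h]\!] \subseteq [\![f]\!]$ and $[\![h]\!] \subseteq [\![hg]\!]$. So the content is the reverse inclusion $[\![f]\!] \cap [\![hg]\!] \subseteq [\![h]\!]$. The plan is to take $z \in [\![f]\!] \cap [\![hg]\!]$ and show $z \mid_{{\rm Int}(R)} h^m$ for some $m$. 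From $z \in [\![hg]\!]$ we get $z \mid_{{\rm Int}(R)} (hg)^k = h^k g^k$ for some $k$, so there is $w \in {\rm Int}(R)$ with $zw = h^k g^k$. The goal is to argue that the factor $g^k$ on the right can be ``absorbed'' into $w$ — i.e. that $z$ actually divides $h^k$ — using the coprimality hypotheses together with the information $z \in [\![f]\!]$.

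\textbf{Key steps.} First I would separate the polynomial part from the content. Write $zw = h^k g^k$ in $K[X]$. Since ${\rm GCD}_{K[X]}(f,g) = K[X]^\times$ and $z \in [\![f]\!]$ forces (via the representation Lemma \ref{represent}, applied after clearing a constant denominator, or directly: $z \mid_{{\rm Int}(R)} f^j$ so the primitive part of $z$ divides $f^j$ in $R[X]$) the primitive part of $z$ in $R[X]$ to be coprime to $g$ in $K[X]$, hence also to $g^k$. Therefore in $K[X]$ the primitive part of $z$ must divide $h^k$. This handles the ``polynomial'' behaviour and shows $z$ differs from a divisor of $h^k$ in ${\rm Int}(R)$ only by a rational constant factor; equivalently, there is $a \in R^\bullet$ and $b \in R^\bullet$ with ${\rm GCD}_R(a,b) = R^\times$ such that $\frac{a}{b} z \mid_{{\rm Int}(R)} h^k$, and it remains to control the constant $b$ (the ``denominator'' that $z$ picks up beyond what $h$ alone provides). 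Concretely, one reduces to showing: if $z \in {\rm Int}(R)$, $z \mid_{{\rm Int}(R)} h^k g^k$, and the primitive part of $z$ divides $h^k$ in $R[X]$, then $z \mid_{{\rm Int}(R)} h^m$ for some $m$.

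\textbf{The content estimate.} For the constant part, the key is the image-content $d$. Writing $z = \frac{p}{q}\,h'$ with $h'$ the relevant primitive polynomial dividing $h^k$, one needs $q \mid_R d(h')$ up to controlling finitely many primes, which is where ${\rm GCD}_R(f(x),g(x)) = R^\times$ for all but finitely many $x \in R$ enters. Since $z w = h^k g^k$ as integer-valued polynomials, evaluating at $x \in R$ gives $z(x) w(x) = h(x)^k g(x)^k$ in $R$; for the cofinitely many $x$ with ${\rm GCD}_R(h(x)^k, g(x)^k) = {\rm GCD}_R(f(x)^?, g(x)^?)^k = R^\times$ — using that $h \mid_{{\rm Int}(R)} f^j$ so $h(x) \mid_R f(x)^j$ — any prime $p$ of $R$ dividing the denominator of $z$ ``at $x$'' must divide $h(x)^k$, not $g(x)^k$. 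Taking minima over such $x$ (the set of which is cofinite, hence suffices for computing $d$ by the argument used in Lemma \ref{LCM}) shows that the constant denominator of $z$ is, up to a bounded factor, already ``explained'' by powers of $h$. Raising to a suitable power $m$ to absorb the bounded discrepancy, one concludes $z \mid_{{\rm Int}(R)} h^m$, i.e. $z \in [\![h]\!]$.

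\textbf{Main obstacle.} The delicate point is the interplay between the polynomial coprimality (over $K[X]$) and the pointwise coprimality of values (over $R$): these together must be leveraged to show the denominator $z$ accrues in $K[X]$ comes entirely from $h$ and not from $g$. The finitely many exceptional $x \in R$ where ${\rm GCD}_R(f(x),g(x)) \ne R^\times$ require the same ``perturb $x$ by a high power of $p$'' trick as in the proof of Lemma \ref{LCM} to ensure the content $d$ can be computed on the good set; and one must be careful that passing from $h$ to $f$ (via $h \mid_{{\rm Int}(R)} f^j$) does not lose coprimality — it does not, since a divisor of $f^j$ in $R[X]$ is still coprime to $g$ whenever $f$ is. Packaging the ``raise to power $m$'' bookkeeping cleanly is the remaining routine-but-fiddly part.
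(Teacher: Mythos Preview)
Your outline is correct in spirit and would eventually succeed, but it takes a substantially longer route than the paper's. You correctly isolate the two ingredients---coprimality in $K[X]$ handles the polynomial part, and the pointwise hypothesis handles the arithmetic part---but you then try to run them separately through a primitive-part/content decomposition of $z$, with a final ``raise to power $m$'' to absorb a discrepancy. The paper avoids all of this.

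The paper's argument is: from $z\mid_{K[X]} f^k$ and $\gcd_{K[X]}(f,g)=1$ one gets $\gcd_{K[X]}(z,g^k)=1$, and then $z\mid_{K[X]} h^kg^k$ forces $z\mid_{K[X]} h^k$, exactly as you say. Now simply set $y=h^k/z\in K[X]$ and check $y\in{\rm Int}(R)$ by evaluating. For each $w\in R$ with $\gcd_R(f(w),g(w))=R^\times$ one has $z(w)\mid_R f(w)^k$, hence $\gcd_R(z(w),g(w)^k)=R^\times$; combined with $z(w)\mid_R h(w)^kg(w)^k$ this gives $z(w)\mid_R h(w)^k$, so $y(w)\in R$. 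Since this holds for all but finitely many $w$, $y\in{\rm Int}(R)$ and therefore $z\mid_{{\rm Int}(R)} h^k$. That is the whole proof.

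Two specific remarks on your write-up. First, the phrase ``the denominator of $z$ at $x$'' is off: $z\in{\rm Int}(R)$, so $z(x)\in R$ always; what your content paragraph is really groping toward is precisely the statement $y(x)=h(x)^k/z(x)\in R$, and once you see that, the decomposition of $z$ becomes superfluous. Second, there is no ``bounded discrepancy'' to absorb and no need to pass to any $m>k$: the same exponent $k$ works all the way through. Your instinct that the ``perturb $x$ by $p^n$'' idea from Lemma~\ref{LCM} is lurking is right---it is what justifies the implication ``$y(x)\in R$ for cofinitely many $x$ $\Rightarrow$ $y\in{\rm Int}(R)$''---but the paper treats this last step as elementary and does not route the argument through image-contents at all.
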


\begin{proof} Let $h\in [\![f]\!]$. ``$\subseteq$'': Let $z\in [\![f]\!]\cap [\![hg]\!]$. There is some $k\in\mathbb{N}$ such that $z\mid_{{\rm Int}(R)} f^k$ and $z\mid_{{\rm Int}(R)} h^kg^k$. Since ${\rm GCD}_{K[X]}(f,g)=K[X]^{\times}$, it follows that $z\mid_{K[X]} h^k$. Therefore, $zy=h^k$ for some $y\in K[X]$. Let $w\in R$ be such that ${\rm GCD}_R(f(w),g(w))=R^{\times}$. Then $z(w)\mid_R f(w)^k$ and $z(w)\mid_R h(w)^kg(w)^k$. Since ${\rm GCD}_R(z(w),g(w)^k)=R^{\times}$ we have $z(w)\mid_R h(w)^k$. We infer that $y(x)\in R$ for all but finitely many $x\in R$. Consequently, $y\in {\rm Int}(R)$, and thus $z\in [\![h]\!]$. ``$\supseteq$'': Trivial.
\end{proof}

\begin{Le}\label{preparation2} Let $R$ be a factorial domain, $K$ a quotient field of $R$, $X$ an indeterminate over $K$ and $f,g\in {\rm Int}(R)^{\bullet}$ such that ${\rm GCD}_{K[X]}(f,g)=K[X]^{\times}$ and ${\rm GCD}_R(f(z),g(z))=R^{\times}$ for some $z\in R$. Then for all $x,x^{\prime}\in [\![f]\!]$ and $y,y^{\prime}\in [\![g]\!]$ such that $xy=x^{\prime}y^{\prime}$ it follows that $x\simeq_{[\![f]\!]} x^{\prime}$ and $y\simeq_{[\![g]\!]} y^{\prime}$.
\end{Le}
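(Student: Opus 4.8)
The plan is to pass to the UFD $K[X]$, use the coprimality of $f$ and $g$ there to pin the ambiguity between the two factorizations down to a unit of $K[X]$, and then invoke the hypothesis ${\rm GCD}_R(f(z),g(z))=R^\times$ to promote that unit to a unit of $R$. Concretely: since $x,x'\in[\![f]\!]$ and $y,y'\in[\![g]\!]$ there are $k,k',m,m'\in\mathbb{N}$ with $x\mid_{{\rm Int}(R)}f^k$, $x'\mid_{{\rm Int}(R)}f^{k'}$, $y\mid_{{\rm Int}(R)}g^m$ and $y'\mid_{{\rm Int}(R)}g^{m'}$ (the exponents may always be taken positive, as a unit divides every element), and these divisibilities carry over to $K[X]$. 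As ${\rm GCD}_{K[X]}(f,g)=K[X]^\times$, no irreducible of $K[X]$ divides a power of $f$ and a power of $g$ at the same time, so ${\rm GCD}_{K[X]}(x,y')={\rm GCD}_{K[X]}(x',y)=K[X]^\times$. Then $xy=x'y'$ forces $x\mid_{K[X]}x'$ and, symmetrically, $x'\mid_{K[X]}x$; hence $x=\lambda x'$ for some $\lambda\in K[X]^\times=K^\times$, and cancelling $x'\neq 0$ in $xy=x'y'$ gives $y'=\lambda y$.

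The crux is to show $\lambda\in R^\times$. Write $\lambda=a/b$ with $a,b\in R^\bullet$ and ${\rm GCD}_R(a,b)=R^\times$ (possible since $R$ is factorial). From the identities $bx=ax'$ and $by'=ay$ in ${\rm Int}(R)$, evaluation at an arbitrary $w\in R$ together with ${\rm GCD}_R(a,b)=R^\times$ yields $a\mid_R x(w)$ and $a\mid_R y'(w)$ for all $w\in R$; in particular $a\mid_R x(z)$ and $a\mid_R y'(z)$. Evaluating $x\mid_{{\rm Int}(R)}f^k$ and $y'\mid_{{\rm Int}(R)}g^{m'}$ at $z$ gives $x(z)\mid_R f(z)^k$ and $y'(z)\mid_R g(z)^{m'}$, so $a\mid_R f(z)^k$ and $a\mid_R g(z)^{m'}$. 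Since ${\rm GCD}_R(f(z),g(z))=R^\times$ implies ${\rm GCD}_R(f(z)^k,g(z)^{m'})=R^\times$ in the factorial domain $R$ (which also handles the degenerate cases in which $f(z)$ or $g(z)$ vanishes, the other value being a unit then), we obtain $a\in R^\times$. Running the same argument with $\lambda^{-1}=b/a$, $x'=\lambda^{-1}x$ and $y=\lambda^{-1}y'$ (using $x'\mid_{{\rm Int}(R)}f^{k'}$ and $y\mid_{{\rm Int}(R)}g^m$) gives $b\in R^\times$. Hence $\lambda=a/b\in R^\times={\rm Int}(R)^\times=[\![f]\!]^\times=[\![g]\!]^\times$, so $x=\lambda x'$ gives $x\simeq_{[\![f]\!]}x'$ and $y=\lambda^{-1}y'$ gives $y\simeq_{[\![g]\!]}y'$.

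I expect the only genuine obstacle to be the promotion of $\lambda$ from $K^\times$ to $R^\times$: a nontrivial constant $\lambda$ is unavoidable in general (for instance when some prime of $R$ divides both $d(f)$ and $d(g)$), so it is precisely the existence of a point $z$ with ${\rm GCD}_R(f(z),g(z))=R^\times$ that pushes $\lambda$ into $R^\times$. Everything else is routine; the remaining care is purely bookkeeping around vanishing values at $z$ and the possibility that an exponent is $0$.
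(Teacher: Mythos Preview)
Your proof is correct and follows essentially the same strategy as the paper: reduce to a scalar $\lambda\in K^\times$ via coprimality of $f$ and $g$ in $K[X]$, then use the point $z$ with ${\rm GCD}_R(f(z),g(z))=R^\times$ to force $\lambda\in R^\times$. The only difference is cosmetic: the paper evaluates $xy=x'y'$ at $z$, obtains $x(z)=rx'(z)$, $y(z)=r^{-1}y'(z)$ for some $r\in R^\times$ by the same $R$-coprimality reasoning, and then matches $r$ with $\varepsilon$ using a nonvanishing value, whereas you write $\lambda=a/b$ in lowest terms and show $a,b\in R^\times$ directly---your version avoids the case split on which of $x'(z),y'(z)$ is nonzero.
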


\begin{proof} Let $x,x^{\prime}\in [\![f]\!]$ and $y,y^{\prime}\in [\![g]\!]$ be such that $xy=x^{\prime}y^{\prime}$. There is some $k\in\mathbb{N}$ such that $x\mid_{{\rm Int}(R)} f^k$, $x^{\prime}\mid_{{\rm Int}(R)} f^k$, $y\mid_{{\rm Int}(R)} g^k$ and $y^{\prime}\mid_{{\rm Int}(R)} g^k$. Since ${\rm GCD}_{K[X]}(f,g)=K[X]^{\times}$, we infer that ${\rm GCD}_{K[X]}(x,y^{\prime})={\rm GCD}_{K[X]}(x^{\prime},y)=K[X]^{\times}$, and thus there is some $\varepsilon\in K[X]^{\times}=K^{\times}$ such that $x=\varepsilon x^{\prime}$ and $y=\varepsilon^{-1} y^{\prime}$. Moreover, $x(z)y(z)=x^{\prime}(z)y^{\prime}(z)$ and ${\rm GCD}_R(x(z),y^{\prime}(z))={\rm GCD}_R(x^{\prime}(z),y(z))=R^{\times}$, hence there is some $r\in R^{\times}$ such that $x(z)=rx^{\prime}(z)$ and $y(z)=r^{-1}y^{\prime}(z)$. Since ${\rm GCD}_R(x^{\prime}(z),y^{\prime}(z))=R^{\times}$ we have $x^{\prime}(z)\not=0$ or $y^{\prime}(z)\not=0$.

Case 1. $x^{\prime}(z)\not=0$: Observe that $rx^{\prime}(z)=x(z)=\varepsilon x^{\prime}(z)$, and thus $\varepsilon=r$.

Case 2. $y^{\prime}(z)\not=0$: Note that $r^{-1}y^{\prime}(z)=y(z)=\varepsilon^{-1} y^{\prime}(z)$, hence $\varepsilon=r$.

In any case it follows that $\varepsilon\in R^{\times}=[\![f]\!]^{\times}=[\![g]\!]^{\times}$. This implies that $x\simeq_{[\![f]\!]} x^{\prime}$ and $y\simeq_{[\![g]\!]} y^{\prime}$.
\end{proof}

\begin{Pro}\label{twoproduct} Let $R$ be a factorial domain, $K$ a quotient field of $R$, $X$ an indeterminate over $K$ and $f,g\in R[X]^{\bullet}$ such that ${\rm GCD}_{R[X]}(f,g)=R[X]^{\times}$, ${\rm GCD}_R(f(x),g(x))=R^{\times}$ for all but finitely many $x\in R$ and $d(rs)=d(r)d(s)$ for all $r\in [\![f]\!]$ and $s\in [\![g]\!]$. If $x\in [\![f]\!]$ and $y\in [\![g]\!]$, then $\mathcal{C}_v([\![xy]\!])\cong\mathcal{C}_v([\![x]\!])\times\mathcal{C}_v([\![y]\!])$.
\end{Pro}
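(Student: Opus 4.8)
The strategy is to reduce the statement to a purely multiplicative fact: I will show that multiplication induces a monoid isomorphism $[\![x]\!]_{{\rm red}}\times[\![y]\!]_{{\rm red}}\to[\![xy]\!]_{{\rm red}}$. Granting this, the proposition follows at once, since the divisor-class group of a monoid coincides with that of its associated reduced monoid, and since $\mathcal{C}_v$ turns a finite direct product of Krull monoids into the corresponding direct product — a product of divisor theories being again a divisor theory (see \cite{GHK}). All monoids in sight ($[\![f]\!]$, $[\![g]\!]$, $[\![fg]\!]$, $[\![x]\!]$, $[\![y]\!]$, $[\![xy]\!]$) are Krull, being monadic submonoids of ${\rm Int}(R)$, and all have unit group $R^{\times}$.

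First I would check that the hypotheses pass down to the relevant pieces. Since $R$ is factorial, ${\rm GCD}_{R[X]}(f,g)=R[X]^{\times}$ forces ${\rm GCD}_{K[X]}(f,g)=K[X]^{\times}$ (pass to primitive parts and apply Gauss's lemma), and ${\rm GCD}_R(f(z),g(z))=R^{\times}$ for all but finitely many $z\in R$ is precisely the pointwise hypothesis of Lemmas \ref{preparation1} and \ref{preparation2}. Hence Lemmas \ref{preparation}, \ref{preparation1}, \ref{preparation2} are all available for $(f,g)$. Moreover, if $u\in[\![f]\!]$ and $v\in[\![g]\!]$ with $u\mid_{{\rm Int}(R)}f^k$ and $v\mid_{{\rm Int}(R)}g^k$, then $u\mid_{K[X]}f^k$, $v\mid_{K[X]}g^k$, and $u(z)\mid_R f(z)^k$, $v(z)\mid_R g(z)^k$ for all $z\in R$, so the coprimality of $f,g$ (in $K[X]$ and pointwise) is inherited by every such pair; and $d(rs)=d(r)d(s)$ obviously restricts to $[\![x]\!]$ and $[\![y]\!]$.

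The heart of the argument is the identity $[\![xy]\!]=\{uv\mid u\in[\![x]\!],\,v\in[\![y]\!]\}$ together with uniqueness of such a decomposition up to units. The inclusion ``$\supseteq$'' is immediate: $u\mid_{{\rm Int}(R)}x^k$ and $v\mid_{{\rm Int}(R)}y^k$ give $uv\mid_{{\rm Int}(R)}(xy)^k$. For ``$\subseteq$'', take $z\in[\![xy]\!]$ and write $(xy)^k=zw$ in ${\rm Int}(R)$ for a suitable $k\in\mathbb{N}$; then $z,w\mid_{{\rm Int}(R)}(fg)^k$, so $z,w\in[\![fg]\!]$, and Lemma \ref{preparation} yields $z=uv$, $w=u^{\prime}v^{\prime}$ with $u,u^{\prime}\in[\![f]\!]$ and $v,v^{\prime}\in[\![g]\!]$. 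Comparing the two factorizations $x^k\cdot y^k=(uu^{\prime})(vv^{\prime})$ of an element of $[\![fg]\!]$ (first factor in $[\![f]\!]$, second in $[\![g]\!]$) and invoking Lemma \ref{preparation2}, we get $uu^{\prime}\simeq_{[\![f]\!]}x^k$ and $vv^{\prime}\simeq_{[\![g]\!]}y^k$; hence $u\mid_{[\![f]\!]}x^k$ and $v\mid_{[\![g]\!]}y^k$, so $u\in[\![x]\!]$, $v\in[\![y]\!]$ and $z=uv$. Uniqueness up to units is a direct application of Lemma \ref{preparation2} (using $[\![x]\!]^{\times}=[\![y]\!]^{\times}=R^{\times}$). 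Thus the natural homomorphism $[\![x]\!]_{{\rm red}}\times[\![y]\!]_{{\rm red}}\to[\![xy]\!]_{{\rm red}}$ is bijective, hence an isomorphism, which completes the proof.

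I expect the inclusion ``$\subseteq$'' above to be the only genuinely delicate point: the reason the $[\![f]\!]$-component $u$ of $z$ must divide a power of $x$ (and not merely a power of $f$) is not visible inside $[\![f]\!]$ alone — it comes from comparing, inside the larger monoid $[\![fg]\!]$, the forced decomposition of $(xy)^k$ with its obvious decomposition $x^k\cdot y^k$ and using that decompositions in $[\![fg]\!]=[\![f]\!]\cdot[\![g]\!]$ are unique (Lemma \ref{preparation2}). A minor separate point is the general fact that $\mathcal{C}_v$ commutes with finite direct products of Krull monoids, which I would quote from \cite{GHK} rather than reprove.
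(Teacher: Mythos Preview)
Your proof is correct and follows the same overall architecture as the paper's: establish a monoid isomorphism $[\![x]\!]_{\rm red}\times[\![y]\!]_{\rm red}\cong[\![xy]\!]_{\rm red}$ via Lemmas \ref{preparation} and \ref{preparation2}, then deduce the class-group statement from \cite[Proposition 2.1.11.2]{GHK}. The one substantive difference is in how you force the $[\![f]\!]$-factor of $z\in[\![xy]\!]$ to lie in $[\![x]\!]$ (and symmetrically). The paper uses Lemma \ref{preparation1} together with divisor-closedness: from $u\in[\![f]\!]\cap[\![xy]\!]\subseteq[\![f]\!]\cap[\![xg]\!]=[\![x]\!]$. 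You instead split the cofactor $w$ as well, then apply Lemma \ref{preparation2} to the equality $x^k\cdot y^k=(uu')\cdot(vv')$ to conclude $uu'\simeq_{[\![f]\!]}x^k$, hence $u\in[\![x]\!]$. Your route bypasses Lemma \ref{preparation1} entirely and therefore uses only the existence of \emph{one} $z\in R$ with ${\rm GCD}_R(f(z),g(z))=R^{\times}$ (the hypothesis of Lemma \ref{preparation2}) rather than coprimality for all but finitely many $z$; in this sense your argument is marginally more economical. The paper's route, on the other hand, makes the set-theoretic identity $[\![f]\!]\cap[\![xg]\!]=[\![x]\!]$ explicit, which has independent interest. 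One small omission: you should note (as the paper does) that the case where $R$ is a field is trivial, so that ``all but finitely many $x\in R$'' indeed yields at least one such $x$.
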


\begin{proof} Without restriction let $R$ be not a field. Let $x\in [\![f]\!]$ and $y\in [\![g]\!]$. We show that $[\![xy]\!]_{{\rm red}}\cong [\![x]\!]_{{\rm red}}\times [\![y]\!]_{{\rm red}}$. Let $\varphi:[\![xy]\!]_{{\rm red}}\rightarrow [\![x]\!]_{{\rm red}}\times [\![y]\!]_{{\rm red}}$ be defined by $\varphi(u[\![xy]\!]^{\times})=(v[\![x]\!]^{\times},w[\![y]\!]^{\times})$ if $u\in [\![xy]\!]$, $v\in [\![x]\!]$ and $w\in [\![y]\!]$ are such that $u=vw$.

First we show that $\varphi$ is well-defined. Let $z\in [\![xy]\!]_{{\rm red}}$. Then there is some $u\in [\![xy]\!]$ such that $z=u[\![xy]\!]^{\times}$. By Lemmas \ref{preparation} and \ref{preparation1} there are some $v\in [\![f]\!]\cap [\![xy]\!]\subseteq [\![f]\!]\cap [\![xg]\!]=[\![x]\!]$ and $w\in [\![g]\!]\cap [\![xy]\!]\subseteq [\![g]\!]\cap [\![fy]\!]=[\![y]\!]$ such that $u=vw$. Now let $u^{\prime}\in [\![xy]\!]$, $v^{\prime}\in [\![x]\!]$ and $w^{\prime}\in [\![y]\!]$ be such that $u^{\prime}=v^{\prime}w^{\prime}$ and $z=u^{\prime}[\![xy]\!]^{\times}$. There is some $\varepsilon\in [\![xy]\!]^{\times}=[\![x]\!]^{\times}=[\![y]\!]^{\times}$ such that $u=\varepsilon u^{\prime}$. Therefore, $vw=\varepsilon v^{\prime}w^{\prime}$, and thus $v\simeq_{[\![x]\!]}\varepsilon v^{\prime}\simeq_{[\![x]\!]} v^{\prime}$ and $w\simeq_{[\![y]\!]} w^{\prime}$ by Lemma \ref{preparation2}. Consequently, $(v[\![x]\!]^{\times},w[\![y]\!]^{\times})=(v^{\prime}[\![x]\!]^{\times},w^{\prime}[\![y]\!]^{\times})$.

Next we show that $\varphi$ is an injective monoid homomorphism. It is clear that $\varphi([\![xy]\!])=([\![x]\!],[\![y]\!])$. Let $z,z^{\prime}\in [\![xy]\!]_{{\rm red}}$. There are some $u,u^{\prime}\in [\![xy]\!]$, $v,v^{\prime}\in [\![x]\!]$ and $w,w^{\prime}\in [\![y]\!]$ such that $z=u[\![xy]\!]^{\times}$, $z^{\prime}=u^{\prime}[\![xy]\!]^{\times}$, $u=vw$ and $u^{\prime}=v^{\prime}w^{\prime}$. Since $uu^{\prime}=vv^{\prime}ww^{\prime}$ we infer that $\varphi(zz^{\prime})=(vv^{\prime}[\![x]\!]^{\times},ww^{\prime}[\![y]\!]^{\times})=(v[\![x]\!]^{\times},w[\![y]\!]^{\times})(v^{\prime}[\![x]\!]^{\times},w^{\prime}[\![y]\!]^{\times})=\varphi(z)\varphi(z^{\prime})$. Now let $\varphi(z)=\varphi(z^{\prime})$. Then $v[\![x]\!]=v^{\prime}[\![x]\!]$ and $w[\![y]\!]=w^{\prime}[\![y]\!]$, and thus $v\simeq_{[\![xy]\!]} v^{\prime}$ and $w\simeq_{[\![xy]\!]} w^{\prime}$. This implies that $u\simeq_{[\![xy]\!]} u^{\prime}$, hence $z=z^{\prime}$.

It is clear that $\varphi$ is surjective. We conclude by \cite[Proposition 2.1.11.2]{GHK} that $\mathcal{C}_v([\![xy]\!])\cong\mathcal{C}_v([\![xy]\!]_{{\rm red}})\cong\mathcal{C}_v([\![x]\!]_{{\rm red}})\times\mathcal{C}_v([\![y]\!]_{{\rm red}})\cong\mathcal{C}_v([\![x]\!])\times\mathcal{C}_v([\![y]\!])$.
\end{proof}

By Theorem \ref{main1} we know that if $H$ is a monadic submonoid generated by some $f\in R[X]^{\bullet}$, then $d$ is multiplicative on $H$ if and only if $H$ is factorial. The last proposition requires a less stringent form of being multiplicative. Next we show that there is an interesting class of monadic submonoids for which $d$ satisfies this weak form of being multiplicative.

\begin{Le}\label{preparation3} Let $R$ be a factorial domain, $K$ a field of quotients of $R$, $X$ an indeterminate over $K$, $a\in R$ and $f,g\in R[X]^{\bullet}$ such that ${\rm GCD}_R(f(a),g(a))=R^{\times}$ and for all $p\in\mathcal{A}(R)$ and $h\in\mathcal{A}(R[X])$ with $(p\mid_R f(a)$ and $h\mid_{R[X]} g)$ or $(p\mid_R g(a)$ and $h\mid_{R[X]} f)$ it follows that $p\mid_{R[X]} h-h(a)$. Then $d(rs)=d(r)d(s)$ for all $r\in [\![f]\!]$ and $s\in [\![g]\!]$.
\end{Le}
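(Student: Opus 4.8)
The plan is to check the multiplicativity of $d$ valuation by valuation. Fix a system of representatives $Q$ of $\mathcal{A}(R)$; for $p\in Q$ let $\mathrm{v}_p$ denote the $p$-adic valuation (extended to $K$). Since $d(h)=\prod_{p\in Q}p^{\min_{c\in R}\mathrm{v}_p(h(c))}$ for every $h\in{\rm Int}(R)^{\bullet}$, it suffices to show that for each $p\in Q$ and all $r\in[\![f]\!]$, $s\in[\![g]\!]$ we have $\min_{c\in R}(\mathrm{v}_p(r(c))+\mathrm{v}_p(s(c)))=\min_{c\in R}\mathrm{v}_p(r(c))+\min_{c\in R}\mathrm{v}_p(s(c))$; since the inequality ``$\geq$'' is clear, this amounts to finding some $c$ at which both $\mathrm{v}_p\circ r$ and $\mathrm{v}_p\circ s$ attain their minimum. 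If $R$ is a field the statement is trivial ($d$ is unit-valued), so we may assume $R$ is not a field and distinguish cases according to how $p$ relates to $f(a)$ and $g(a)$.

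The decisive case is $p\mid_R f(a)$, which by the coprimality hypothesis forces $p\nmid_R g(a)$ (and it subsumes $f(a)=0$, since then $g(a)\in R^{\times}$ and ``$p\mid_R f(a)$'' holds trivially). Here I would show that $\mathrm{v}_p\circ s$ is \emph{constant}. By Lemma \ref{represent} write $s=\frac{bh}{a'}$ with $a',b\in[\![g]\!]\cap R$, $h\in[\![g]\!]\cap R[X]$ primitive and $h\mid_{R[X]}g^l$ for some $l$; as $R$ is factorial and $h$ is primitive, every irreducible factor of $h$ is a non-constant irreducible factor $g_j$ of $g$. For each such $g_j$ the hypothesis yields $p\mid_{R[X]}g_j-g_j(a)$, hence $g_j(c)\equiv g_j(a)\pmod{pR}$ for all $c\in R$; moreover $g_j(a)\mid_R g(a)$, so $p\nmid_R g_j(a)$, and therefore $\mathrm{v}_p(g_j(c))=0$ for every $c\in R$. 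Thus $\mathrm{v}_p(h(c))=0$ and $\mathrm{v}_p(s(c))=\mathrm{v}_p(b)-\mathrm{v}_p(a')$ for all $c$, which is indeed constant, and the required identity follows immediately. The case $p\mid_R g(a)$ is handled symmetrically, now using $r\in[\![f]\!]$ and the clause ``$p\mid_R g(a)$ and $h\mid_{R[X]}f$'' of the hypothesis to show that $\mathrm{v}_p\circ r$ is constant. In the remaining case $p\nmid_R f(a)$ and $p\nmid_R g(a)$ (so $f(a),g(a)\neq 0$), from $r\mid_{{\rm Int}(R)}f^k$ and $s\mid_{{\rm Int}(R)}g^k$ one gets $r(a)\mid_R f(a)^k$ and $s(a)\mid_R g(a)^k$, whence $\mathrm{v}_p(r(a))=\mathrm{v}_p(s(a))=0$; so both minima equal $0$ and are attained at $c=a$.

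The only genuinely delicate step is the claim in the first case that $\mathrm{v}_p\circ s$ is constant: one must choose the representation of $s$ supplied by Lemma \ref{represent}, reduce the question to the irreducible factors of $g$, translate ``$p\mid_{R[X]}g_j-g_j(a)$'' into ``$\mathrm{v}_p(g_j(c))$ does not depend on $c$'', and observe that $p$ cannot divide $g_j(a)$ because $g_j(a)$ divides $g(a)$, which is coprime to $f(a)$. The degenerate configurations $f(a)=0$ or $g(a)=0$ deserve an explicit remark — there the relevant divisibility ``$p\mid 0$'' is vacuously true, so the hypothesis forces the factors of the other polynomial to have $p$-free values throughout $R$, and the argument above applies verbatim. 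Everything else is bookkeeping.
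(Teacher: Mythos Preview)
Your proof is correct and follows essentially the same route as the paper: reduce to a single prime $p$, use the representation of Lemma \ref{represent} to write $s$ (resp.\ $r$) as a $K^{\times}$-multiple of a product of non-constant irreducible factors of $g$ (resp.\ $f$), and use the hypothesis $p\mid_{R[X]} g_j-g_j(a)$ together with $p\nmid_R g_j(a)$ to force $\mathrm{v}_p(g_j(c))=0$ for all $c$. The only cosmetic differences are that you state the sharper conclusion ``$\mathrm{v}_p\circ s$ is constant'' in the main case, and in the case $p\nmid_R f(a)g(a)$ you argue directly from $r(a)\mid_R f(a)^k$ instead of invoking the representation again.
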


\begin{proof} Let $r\in [\![f]\!]$ and $s\in [\![g]\!]$. Let $\mathcal{P}$ be a system of representatives of $\mathcal{A}(R)$. To prove that $d(rs)=d(r)d(s)$, we need to show that for each $p\in\mathcal{P}$ there is some $y\in R$ such that $\mathrm{v}_p(r(y))=\min\{\mathrm{v}_p(r(x))\mid x\in R\}$ and $\mathrm{v}_p(s(y))=\min\{\mathrm{v}_p(s(x))\mid x\in R\}$. Let $p\in\mathcal{P}$. It is an easy consequence of Lemma \ref{represent} that there are some $b,c\in K^{\bullet}$, $n,m\in\mathbb{N}$, $\underline{\alpha}\in\mathbb{N}_0^n$, $\underline{\beta}\in\mathbb{N}_0^m$, $\underline{f}\in\mathcal{A}(R[X])^n$ and $\underline{g}\in\mathcal{A}(R[X])^m$ such that $r=b\prod_{i=1}^n f_i^{\alpha_i}$, $s=c\prod_{j=1}^m g_j^{\beta_j}$, $f_i\mid_{R[X]} f$ for all $i\in [1,n]$ and $g_j\mid_{R[X]} g$ for all $j\in [1,m]$.

If $z\in R$ is such that $\mathrm{v}_p(f_i(z))=0$ for all $i\in [1,n]$, then $\mathrm{v}_p(r(z))=\mathrm{v}_p(b)+\sum_{i=1}^n\alpha_i\mathrm{v}_p(f_i(z))=\mathrm{v}_p(b)\leq\mathrm{v}_p(b)+\sum_{i=1}^n\alpha_i\mathrm{v}_p(f_i(v))=\mathrm{v}_p(r(v))$ for all $v\in R$, and thus $\mathrm{v}_p(r(z))=\min\{\mathrm{v}_p(r(x))\mid x\in R\}$.

Analogously, if $w\in R$ is such that $\mathrm{v}_p(g_j(w))=0$ for all $j\in [1,m]$, then $\mathrm{v}_p(s(w))=\min\{\mathrm{v}_p(s(x))\mid x\in R\}$.

Case 1. $p\nmid_R (fg)(a)$: Observe that $\mathrm{v}_p(f_i(a))=\mathrm{v}_p(g_j(a))=0$ for all $i\in [1,n]$ and $j\in [1,m]$. Therefore, $\mathrm{v}_p(r(a))=\min\{\mathrm{v}_p(r(x))\mid x\in R\}$ and $\mathrm{v}_p(s(a))=\min\{\mathrm{v}_p(s(x))\mid x\in R\}$.

Case 2. $p\mid_R f(a)$: There is some $y\in R$ such that $\mathrm{v}_p(r(y))=\min\{\mathrm{v}_p(r(x))\mid x\in R\}$. Let $j\in [1,m]$. Note that $p\mid_{R[X]} g_j-g_j(a)$. Consequently, $p\mid_R g_j(y)-g_j(a)$, and since $p\nmid_R g_j(a)$, we have $\mathrm{v}_p(g_j(y))=0$. This implies that $\mathrm{v}_p(s(y))=\min\{\mathrm{v}_p(s(x))\mid x\in R\}$.

Case 3. $p\mid_R g(a)$: Goes along the same lines as case 2.
\end{proof}

\begin{Pro}\label{twoproduct1} Let $R$ be a factorial domain, $K$ a field of quotients of $R$, $X$ an indeterminate over $K$, $a\in R$ and $f,g\in R[X]^{\bullet}$ such that ${\rm GCD}_{R[X]}(f,g)=R[X]^{\times}$, ${\rm GCD}_R(f(a),g(a))=R^{\times}$ and $d(rs)=d(r)d(s)$ for all $r\in [\![f]\!]$ and $s\in [\![g]\!]$. Then $\mathcal{C}_v([\![fg]\!])\cong\mathcal{C}_v([\![f]\!])\times\mathcal{C}_v([\![g]\!])$.
\end{Pro}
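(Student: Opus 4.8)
The plan is to imitate the proof of Proposition \ref{twoproduct} with the specialization $x=f$, $y=g$, observing that the only ingredient there that uses the hypothesis ``${\rm GCD}_R(f(x),g(x))=R^{\times}$ for all but finitely many $x\in R$'' — namely the appeal to Lemma \ref{preparation1} to identify $[\![f]\!]\cap [\![xy]\!]$ with $[\![x]\!]$ — degenerates to a triviality in this case, so that the single-point hypothesis ${\rm GCD}_R(f(a),g(a))=R^{\times}$ is enough.

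First I would collect the preliminaries. Since $f$ and $g$ divide $fg$ in ${\rm Int}(R)$ and $[\![fg]\!]$ is divisor-closed, we have $[\![f]\!]\cup [\![g]\!]\subseteq [\![fg]\!]$, whence $[\![f]\!]\cap [\![fg]\!]=[\![f]\!]$ and $[\![g]\!]\cap [\![fg]\!]=[\![g]\!]$; this is the substitute for Lemma \ref{preparation1}. By Gauss's Lemma, ${\rm GCD}_{R[X]}(f,g)=R[X]^{\times}$ forces ${\rm GCD}_{K[X]}(f,g)=K[X]^{\times}$ (a primitive polynomial in $R[X]$ representing a greatest common divisor of $f$ and $g$ in $K[X]$ divides both $f$ and $g$ in $R[X]$, hence is a unit), so Lemma \ref{preparation2} is available with the single point $z=a$, and Lemma \ref{preparation} applies because ${\rm GCD}_{R[X]}(f,g)=R[X]^{\times}$ and $d$ is multiplicative across $[\![f]\!]$ and $[\![g]\!]$. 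As in Proposition \ref{twoproduct}, one may assume without restriction that $R$ is not a field.

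Then, following the proof of Proposition \ref{twoproduct} essentially verbatim, I would define $\varphi:[\![fg]\!]_{{\rm red}}\to [\![f]\!]_{{\rm red}}\times [\![g]\!]_{{\rm red}}$ by $\varphi(u[\![fg]\!]^{\times})=(v[\![f]\!]^{\times},w[\![g]\!]^{\times})$ whenever $u=vw$ with $v\in [\![f]\!]$ and $w\in [\![g]\!]$. Lemma \ref{preparation} guarantees that every $u\in [\![fg]\!]$ admits such a factorization, and Lemma \ref{preparation2} (with $z=a$) guarantees that it is unique up to associates in $[\![f]\!]$ resp. $[\![g]\!]$, so $\varphi$ is well defined. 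It is a monoid homomorphism because $(vw)(v'w')=(vv')(ww')$ with $vv'\in [\![f]\!]$ and $ww'\in [\![g]\!]$; it is surjective because $vw\in [\![fg]\!]$ for all $v\in [\![f]\!]$, $w\in [\![g]\!]$; and it is injective because $v\simeq_{[\![f]\!]}v'$, $w\simeq_{[\![g]\!]}w'$ together with $[\![fg]\!]^{\times}=[\![f]\!]^{\times}=[\![g]\!]^{\times}=R^{\times}$ imply $u\simeq_{[\![fg]\!]}u'$. Thus $[\![fg]\!]_{{\rm red}}\cong [\![f]\!]_{{\rm red}}\times [\![g]\!]_{{\rm red}}$, and \cite[Proposition 2.1.11.2]{GHK} then yields $\mathcal{C}_v([\![fg]\!])\cong\mathcal{C}_v([\![fg]\!]_{{\rm red}})\cong\mathcal{C}_v([\![f]\!]_{{\rm red}})\times\mathcal{C}_v([\![g]\!]_{{\rm red}})\cong\mathcal{C}_v([\![f]\!])\times\mathcal{C}_v([\![g]\!])$.

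There is no genuine obstacle: the one point to get right is the observation in the first paragraph, that specializing to $x=f$, $y=g$ turns the saturation identities $[\![f]\!]\cap [\![fg]\!]=[\![f]\!]$ and $[\![g]\!]\cap [\![fg]\!]=[\![g]\!]$ into trivialities and thereby removes the need for Lemma \ref{preparation1} and the stronger coprimality hypothesis it requires. After that, the argument is a copy of the proof of Proposition \ref{twoproduct}; the most error-prone part is merely the unit bookkeeping over $R^{\times}$ when verifying that $\varphi$ is well defined and injective.
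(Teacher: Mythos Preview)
Your proposal is correct and follows essentially the same approach as the paper: establish $[\![fg]\!]_{\rm red}\cong [\![f]\!]_{\rm red}\times [\![g]\!]_{\rm red}$ via Lemmas \ref{preparation} and \ref{preparation2}, then apply \cite[Proposition 2.1.11.2]{GHK}. The paper's proof is just a terse two-line version of what you wrote; your additional observation that Lemma \ref{preparation1} becomes unnecessary when $x=f$, $y=g$ (because $[\![f]\!],[\![g]\!]\subseteq [\![fg]\!]$ trivially) is exactly the point, and your Gauss's Lemma remark supplying ${\rm GCD}_{K[X]}(f,g)=K[X]^{\times}$ for Lemma \ref{preparation2} is a detail the paper leaves implicit.
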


\begin{proof} It follows from Lemmas \ref{preparation} and \ref{preparation2} that $[\![fg]\!]_{{\rm red}}\cong [\![f]\!]_{{\rm red}}\times [\![g]\!]_{{\rm red}}$. We conclude by \cite[Proposition 2.1.11.2]{GHK} that $\mathcal{C}_v([\![fg]\!])\cong\mathcal{C}_v([\![fg]\!]_{{\rm red}})\cong\mathcal{C}_v([\![f]\!]_{{\rm red}})\times\mathcal{C}_v([\![g]\!]_{{\rm red}})\cong\mathcal{C}_v([\![f]\!])\times\mathcal{C}_v([\![g]\!])$.
\end{proof}

\begin{Co}\label{twoproduct2} Let $R$ be a factorial domain, $K$ a field of quotients of $R$, $X$ an indeterminate over $K$, $a\in R$ and $f,g\in R[X]^{\bullet}$ such that ${\rm GCD}_{R[X]}(f,g)=R[X]^{\times}$, ${\rm GCD}_R(f(a),g(a))=R^{\times}$ and for all $p\in\mathcal{A}(R)$ and $h\in\mathcal{A}(R[X])$ with $(p\mid_R f(a)$ and $h\mid_{R[X]} g)$ or $(p\mid_R g(a)$ and $h\mid_{R[X]} f)$ it follows that $p\mid_{R[X]} h-h(a)$. Then $\mathcal{C}_v([\![fg]\!])\cong\mathcal{C}_v([\![f]\!])\times\mathcal{C}_v([\![g]\!])$.
\end{Co}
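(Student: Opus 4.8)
The plan is to recognize that this corollary is nothing more than the concatenation of two results that have already been proved, Lemma \ref{preparation3} and Proposition \ref{twoproduct1}, and that the hypotheses here have been chosen precisely so that the output of the former becomes the missing input of the latter. So there is essentially no new content to produce; the work is in checking that the assumptions line up verbatim.

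Concretely, I would first apply Lemma \ref{preparation3}. Its hypotheses are exactly that $R$ is a factorial domain, that ${\rm GCD}_R(f(a),g(a))=R^{\times}$, and that for all $p\in\mathcal{A}(R)$ and $h\in\mathcal{A}(R[X])$ with $(p\mid_R f(a)$ and $h\mid_{R[X]} g)$ or $(p\mid_R g(a)$ and $h\mid_{R[X]} f)$ one has $p\mid_{R[X]} h-h(a)$ — and all of these are among the assumptions of the corollary. Hence $d(rs)=d(r)d(s)$ for all $r\in[\![f]\!]$ and $s\in[\![g]\!]$. I would then feed this conclusion into Proposition \ref{twoproduct1}: its remaining hypotheses, ${\rm GCD}_{R[X]}(f,g)=R[X]^{\times}$ and ${\rm GCD}_R(f(a),g(a))=R^{\times}$, are again part of the standing assumptions here, so the proposition applies and gives $\mathcal{C}_v([\![fg]\!])\cong\mathcal{C}_v([\![f]\!])\times\mathcal{C}_v([\![g]\!])$, which is the assertion.

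There is no real obstacle: the substantive arguments are hidden inside the two cited results. Lemma \ref{preparation3} reduces, via the representation Lemma \ref{represent}, an arbitrary $r\in[\![f]\!]$ (resp. $s\in[\![g]\!]$) to a constant times a monomial in atoms of $R[X]$ dividing $f$ (resp. $g$), and then runs a three-case analysis on whether a given prime $p$ divides $(fg)(a)$, using the transcribed divisibility condition to force the $p$-adic values of $r$ and $s$ to attain their minima simultaneously at $a$. Proposition \ref{twoproduct1} in turn uses Lemmas \ref{preparation} and \ref{preparation2} to establish the monoid isomorphism $[\![fg]\!]_{\rm red}\cong[\![f]\!]_{\rm red}\times[\![g]\!]_{\rm red}$ and then invokes \cite[Proposition 2.1.11.2]{GHK} to pass to divisor-class groups. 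Thus the proof of the corollary itself is a one-line citation of these two facts, the only genuine task being the (purely bookkeeping) verification that the hypothesis lists match.
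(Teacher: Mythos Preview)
Your proposal is correct and matches the paper's proof exactly: the paper simply states that the corollary is an immediate consequence of Lemma \ref{preparation3} and Proposition \ref{twoproduct1}, which is precisely the concatenation you describe.
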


\begin{proof} This is an immediate consequence of Lemma \ref{preparation3} and Proposition \ref{twoproduct1}.
\end{proof}

\section{Examples, important consequences and second main result}

In this section we present several applications of the abstract theory. We start with a bunch of examples that serve as counterexamples for various questions. We use the set of prime numbers as choice for the set of representatives of $\mathcal{A}(\mathbb{Z})$. If $\mathbb{Z}$ is the base ring, then let all monadic submonoids be monadic submonoids of ${\rm Int}(\mathbb{Z})$. Note that if $H$ is an atomic monoid (e.g. $H$ is a Krull monoid), $\mathcal{Q}$ is a system of representatives of $\mathcal{A}(H)$, and $P\in\mathfrak{X}(H)$, then $P=\bigcup_{u\in\mathcal{Q}\cap P} uH$. We will use this fact without citation. It was used implicitly in the proof of Lemma \ref{compatible1}. Also note that if $T\subseteq H$ is a divisor-closed submonoid of $H$ and $z\in H$, then either $zH\cap T=zT$ or $zH\cap T=\emptyset$.

\begin{Ex}\label{Ex1} Let $X$ be an indeterminate over $\mathbb{Q}$. Set $u_1=2$, $u_2=3$, $u_3=X$, $u_4=X-1$, $u_5=X-2$, $u_6=\frac{u_3u_4}{2}$, $u_7=\frac{u_4u_5}{2}$, $u_8=\frac{u_3u_4u_5}{6}$, $u_9=\frac{u_3u_4^2u_5}{12}$ and $u_{10}=\frac{u_3u_4^3u_5}{24}$. For $J\subseteq [1,10]$, set $U_J=\{u_j\mid j\in J\}$. Set $H=[\![u_3u_4u_5]\!]$, $S=[\![\frac{u_3^2u_4^3u_5^2}{8}]\!]$, $T=[\![\frac{u_3u_4^3u_5^3}{24}]\!]$, $V=[\![\frac{u_3u_4^3u_5}{6}]\!]$, $W=[\![\frac{u_3u_4^2u_5}{12}]\!]$, $Y=[\![u_3u_4]\!]$, and $Z=[\![\frac{u_3^2u_4}{2}]\!]$.
\begin{itemize}
\item $S=H\setminus U_{\{1,4\}}H$, $T=H\setminus U_{\{1,2,3,4,6\}}H$, and $V=H\setminus U_{\{2,3,5,6,7\}}H$.
\item $W=H\setminus U_{\{1,2,3,4,5,6,7\}}H$, $Y=H\setminus U_{\{2,5,7,8,9,10\}}H$, and $Z=H\setminus U_{\{1,2,4,5,7,8,9,10\}}H$.
\item $\{uH\mid u\in\mathcal{A}(H)\}=\{u_1H,u_2H,u_3H,u_4H,u_5H,u_6H,u_7H,u_8H,u_9H,u_{10}H\}$.
\item $\{uS\mid u\in\mathcal{A}(S)\}=\{u_2S,u_3S,u_5S,u_6S,u_7S,u_8S,u_9S,u_{10}S\}$.
\item $\{uT\mid u\in\mathcal{A}(T)\}=\{u_5T,u_7T,u_8T,u_9T,u_{10}T\}$, and $\{uV\mid u\in\mathcal{A}(V)\}=\{u_1V,u_4V,u_8V,u_9V,u_{10}V\}$.
\item $\{uW\mid u\in\mathcal{A}(W)\}=\{u_8W,u_9W,u_{10}W\}$, and $\{uY\mid u\in\mathcal{A}(Y)\}=\{u_1Y,u_3Y,u_4Y,u_6Y\}$.
\item $\{uZ\mid u\in\mathcal{A}(Z)\}=\{u_3Z,u_6Z\}$.
\item $\mathfrak{X}(H)=\{U_{\{1,3,5,6,8,9\}}H,U_{\{1,3,5,7,8,9\}}H,U_{\{1,4\}}H,U_{\{2,3,6\}}H,U_{\{2,4,6,7,9,10\}}H,U_{\{2,5,7\}}H,U_{\{3,6,8,9,10\}}H,\\
U_{\{4,6,7,8,9,10\}}H,U_{\{5,7,8,9,10\}}H\}$.
\item $\mathfrak{X}(S)=\{U_{\{2,3,6\}}S,U_{\{2,5,7\}}S,U_{\{2,6,7,9,10\}}S,U_{\{3,5,6,8,9\}}S,U_{\{3,5,7,8,9\}}S,U_{\{3,6,8,9,10\}}S,U_{\{5,7,8,9,10\}}S,\\
U_{\{6,7,8,9,10\}}S\}$.
\item $\mathfrak{X}(T)=\{U_{\{5,7\}}T,U_{\{5,8,9\}}T,U_{\{7,9,10\}}T,U_{\{8,9,10\}}T\}$, and $\mathfrak{X}(V)=\{U_{\{1,4\}}V,U_{\{1,8,9\}}V,U_{\{4,9,10\}}V,\\
U_{\{8,9,10\}}V\}$.
\item $\mathfrak{X}(W)=\{U_{\{8,9\}}W,U_{\{9,10\}}W\}$, $\mathfrak{X}(Y)=\{U_{\{1,3\}}Y,U_{\{1,4\}}Y,U_{\{3,6\}}Y,U_{\{4,6\}}Y\}$, and $\mathfrak{X}(Z)=\\
\{u_3Z,u_6Z\}$.
\item $\mathcal{C}_v(H)\cong\mathcal{C}_v(S)\cong\mathbb{Z}^4$, $\mathcal{C}_v(T)\cong\mathcal{C}_v(V)\cong\mathcal{C}_v(Y)\cong\mathbb{Z}$, and $\mathcal{C}_v(W)\cong\mathbb{Z}/2\mathbb{Z}$.
\item $Z$ is factorial and $u_3\mathbb{Q}[X]\cap Z=U_{\{3,6\}}Z=Z\setminus Z^{\times}$ is not divisorial.
\end{itemize}
\end{Ex}

\begin{proof} It is straightforward to prove that $d(u_3^ku_4^lu_5^m)=2^{\min\{2k+m,l,k+2m\}}3^{\min\{k,l,m\}}$ for all $k,l,m\in\mathbb{N}_0$. Now one can show by a careful case analysis that $\{\underline{x}\in\mathbb{N}_0^3\mid\underline{x}$ is $(u_3,u_4,u_5)$-irreducible$\}=\{(1,0,0),(0,1,0),(0,0,1),(1,1,0),(0,1,1),(1,1,1),(1,2,1),(1,3,1)\}$. It follows that $\{uH\mid u\in\mathcal{A}(H)\}=\{u_iH\mid i\in [1,10]\}$, by Proposition \ref{charatoms}.

\medskip
Note that each primitive $g\in H\cap R[X]$ is associated to some element of the form $u_3^ku_4^lu_5^m$ for some $k,l,m\in\mathbb{N}_0$. We have $2\mid\frac{d(u_3u_4^2u_5)}{d(u_3u_4u_5)}$. If $k,l,m\in\mathbb{N}_0$ are such that $2\mid\frac{d(u_3^ku_4^{l+1}u_5^m)}{d(u_3^ku_4^lu_5^m)}$, then $l<\min\{k,m\}$, and then it is easy to show that $2\nmid\frac{d(u_ju_3^ku_4^lu_5^m)}{d(u_3^ku_4^lu_5^m)}$ for every $j\in\{3,5,6,7,8,9,10\}$. Consequently, $U_{\{4\}}$ is maximal $2$-compatible in $\mathcal{A}(H)\setminus\mathbb{Z}$. Analogously, we have $U_{\{1,3,5,6,8,9\}}$ and $U_{\{1,3,5,7,8,9\}}$ are maximal 2-compatible in $\mathcal{A}(H)\setminus\mathbb{Z}$, and $U_{\{2,3,6\}}$, $U_{\{2,4,6,7,9,10\}}$, and $U_{\{2,5,7\}}$ are maximal 3-compatible in $\mathcal{A}(H)\setminus\mathbb{Z}$. Clearly, we have $u_3\mathbb{Q}[X]\cap H=U_{\{3,6,8,9,10\}}H$, $u_4\mathbb{Q}[X]\cap H=U_{\{4,6,7,8,9,10\}}H$, and $u_5\mathbb{Q}[X]\cap H=U_{\{5,7,8,9,10\}}H$. It follows from Corollary \ref{charemptsect}, Remark \ref{charnonemptsect}, Lemma \ref{compatible1}.2, and Remark \ref{compatible2} that $\mathfrak{X}(H)$ can be expressed as asserted.

\medskip
It is easy to see that the other monoids ($S,T,V,W,Y$ and $Z$) are all monadic submonoids of $H$. They are, therefore, complements of unions of height-one prime ideals of $H$ by Lemma \ref{heightonesub}. We show that $S=H\setminus U_{\{1,4\}}H$. The corresponding equalities for the remaining monoids can be proved in analogy. Set $A=H\setminus U_{\{1,4\}}H$. First, note that $A=[\![h]\!]$ for some $h\in H$ by Lemma \ref{heightonesub}. Since $h$ is a product of atoms of $H$, and $h\in A$ we have $h$ is associated to a product of elements of $U_{\{2,3,5,6,7,8,9,10\}}$. Consequently, $A=[\![u_2u_3\prod_{i=5}^{10}u_i]\!]=[\![u_2(\frac{u_3u_4u_5}{2})^2(\frac{u_3u_4^2u_5}{12})^3]\!]=[\![u_2\frac{u_3u_4u_5}{2}\frac{u_3u_4^2u_5}{12}]\!]=S$. It is now simple to prove the remaining statements concerning sets of atoms. It is clear that $Z$ is factorial (since every height-one prime ideal of $Z$ is principal). Moreover, we have $u_3\mathbb{Q}[X]\cap Z=u_3\mathbb{Q}[X]\cap H\cap Z=U_{\{3,6,8,9,10\}}H\cap Z=U_{\{3,6\}}Z=Z\setminus Z^{\times}$, since every non-unit of $Z$ is a multiple of $u_3$ or $u_6$. If $Z\setminus Z^{\times}$ is divisorial, then $Z$ is a discrete valuation monoid, and hence it has only one atom up to associates, a contradiction. Therefore, $Z\setminus Z^{\times}$ is not divisorial.

\medskip
It remains to show all statements about divisor-class groups. It follows from Theorem \ref{main1} that $\mathcal{C}_v(H)\cong\mathbb{Z}^4$. We only show that $\mathcal{C}_v(W)\cong\mathbb{Z}/2\mathbb{Z}$. The other assertions follow in analogy. Let $(P_i)_{i=1}^9$ be the sequence of height-one prime ideals of $H$ in the above order (i.e., $P_1=U_{\{1,3,5,6,8,9\}}$, $P_2=U_{\{1,3,5,7,8,9\}}H$, and so on). We determine the $v$-product decompositions of principal ideals of $H$ generated by atoms. Set $f=u_3u_4u_5$. The definition of $e_f$ can be found in section 3. It is straightforward to prove that $e_f(u_4)=e_f(u_6)=e_f(u_7)=e_f(u_9)=6$. We infer by Proposition \ref{products} that $u_1H=(P_1P_2P_3)_v$, $u_2H=(P_4P_5P_6)_v$, $u_4H=(P_3P_5P_8)_v$, $u_6H=(P_1P_4P_5P_7P_8)_v$, $u_7H=(P_2P_5P_6P_8P_9)_v$, and $u_9H=(P_1P_2P_5P_7P_8^2P_9)_v$. Therefore, $u_3H=\frac{u_1u_6}{u_4}H=(P_1^2P_2P_4P_7)_v$, $u_5H=\frac{u_1u_7}{u_4}H=(P_1^2P_2^2P_6P_9)_v$, $u_8H=\frac{u_1u_9}{u_4}H=(P_1^2P_2^2P_7P_8P_9)_v$, and $u_{10}H=\frac{u_4u_9}{u_1}H=(P_5^2P_7P_8^3P_9)_v$. Set $Q_1=U_{\{8,9\}}W$, and $Q_2=U_{\{9,10\}}W$. Then $\{P\in\mathfrak{X}(H)\mid P\cap W=Q_1\}=\{P_1,P_2\}$, and $\{P\in\mathfrak{X}(H)\mid P\cap W=Q_2\}=\{P_5\}$. We have $\mathrm{v}_{P_1}((Q_1)_{v_H})=\min\{\mathrm{v}_{P_1}(u_8H),\mathrm{v}_{P_1}(u_9H)\}=1$, $\mathrm{v}_{P_2}((Q_1)_{v_H})=\min\{\mathrm{v}_{P_2}(u_8H),\mathrm{v}_{P_2}(u_9H)\}=1$, and $\mathrm{v}_{P_5}((Q_2)_{v_H})=\min\{\mathrm{v}_{P_5}(u_9H),\mathrm{v}_{P_5}(u_{10}H)\}=1$. We infer by Proposition \ref{exponent} that $\mathrm{v}_{Q_1}(u_8W)=\max\{\lceil\frac{\mathrm{v}_{P_1}(u_8H)}{\mathrm{v}_{P_1}((Q_1)_{v_H})}\rceil,\lceil\frac{\mathrm{v}_{P_2}(u_8H)}{\mathrm{v}_{P_2}((Q_1)_{v_H})}\rceil\}=2$, $\mathrm{v}_{Q_2}(u_{10}W)=2$, and $\mathrm{v}_{Q_1}(u_9W)=\mathrm{v}_{Q_2}(u_9W)=1$. Consequently, $u_8W=(Q_1^2)_{v_W}$, $u_9W=(Q_1Q_2)_{v_W}$, and $u_{10}W=(Q_2^2)_{v_W}$. This implies that $[Q_1]=[Q_2]$, and thus $[Q_1]$ is an element of order 2 which generates $\mathcal{C}_v(W)$. Therefore, $\mathcal{C}_v(W)\cong\mathbb{Z}/2\mathbb{Z}$.
\end{proof}

The last example shows that there is some factorial domain $R$ and $f,g,h,k\in {\rm Int}(R)^{\bullet}$ such that $k\in [\![h]\!]$, and
\begin{itemize}
\item $[\![f]\!]$ satisfies the equivalent conditions in Proposition \ref{chards} and yet $\mathcal{C}_v([\![f]\!])$ is infinite.
\item $\mathcal{C}_v([\![g]\!])$ is finite, and yet $[\![g]\!]$ is not factorial.
\item There are some $I\in\mathcal{I}_v([\![k]\!])^{\bullet}$ and $P\in\mathfrak{X}([\![k]\!])$ such that $\mathrm{v}_P(I)<\max\{\mathrm{v}_Q(I_{v_{[\![h]\!]}})\mid Q\in\mathfrak{X}([\![h]\!]),Q\cap [\![k]\!]=P\}$.
\end{itemize}

Observe that $U_{\{3,6,8,9,10\}}H\cap Z=U_{\{3,6\}}Z$ in the last example. We know that $H$ is a Krull monoid, $U_{\{3,6,8,9,10\}}H$ is a height-one prime ideal of $H$ (and hence it is divisorial), $Z$ is a monadic submonoid of $H$, and yet $U_{\{3,6,8,9,10\}}H\cap Z$ is not a divisorial ideal of $Z$.

Recall that if $G$ is an additive abelian group, then the Davenport constant of $G$ (denoted by ${\rm D}(G)$) is defined to be the supremum of all lengths of nonempty minimal zero-sum sequences of $G$ (see \cite[Definition 1.4.8]{GHK}).

\begin{Le}\label{exprep} Let $R$ be a factorial domain, $K$ a field of quotients of $R$, $X$ an indeterminate over $K$, $p\in\mathcal{A}(R)$, $n\in\mathbb{N}_{\geq 2}$ and $\underline{f}\in (\mathcal{A}(R[X])\setminus R)^n$ a sequence of pairwise non-associated elements of $R[X]$ such that $d(\prod_{i=1}^n f_i^{r_i})=p^{\min\{r_i\mid i\in [1,n]\}}$ for all $\underline{r}\in\mathbb{N}_0^n$. Set $H=[\![\prod_{i=1}^n f_i]\!]$. Then $\{uH\mid u\in\mathcal{A}(H)\}=\{pH,\frac{\prod_{i=1}^n f_i}{p}H\}\cup\{f_iH\mid i\in [1,n]\}$, $\mathfrak{X}(H)=\{pH\cup f_iH,f_iH\cup\frac{\prod_{j=1}^n f_j}{p}H\mid i\in [1,n]\}$, $\mathcal{C}_v(H)\cong\mathbb{Z}^{n-1}$, all proper divisor-closed submonoids of $H$ are factorial, for every $P\in\mathfrak{X}(H)$ there is some $Q\in\mathfrak{X}(H)$ such that $(PQ)_v$ is a principal ideal of $H$ and ${\rm D}(\{[P]\mid P\in\mathfrak{X}(H)\})\geq n$.
\end{Le}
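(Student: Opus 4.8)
The plan is to run everything through the machinery of Sections~3 and~4. First I would identify the atoms of $H=[\![\prod_{i=1}^n f_i]\!]$ via Proposition~\ref{charatoms} applied with $a=1$ and the given sequence $\underline{f}$. The constant atoms are the atoms of $R$ dividing $d(\prod_i f_i)=p$, i.e.\ just $p$ up to associates. The non-constant atoms correspond to $\underline{f}$-irreducible vectors $\underline{y}\in\mathbb{N}_0^n$; using the hypothesis $d(\prod_i f_i^{r_i})=p^{\min_i r_i}$ one checks by a short case analysis that the $\underline{f}$-irreducible vectors are exactly the standard basis vectors $e_i$ (giving atoms $f_i$, since $d(f_i)=p^0=1$) and the all-ones vector $\underline{1}$ (giving the atom $\tfrac{\prod_i f_i}{p}$); any vector with all coordinates $\ge 1$ and some coordinate $\ge 2$ splits off a copy of $\underline 1$ with the $d$-values multiplying, while any vector with a zero coordinate and some coordinate $\ge 2$ splits as a sum of coordinate vectors with $d$-values all equal to $1$. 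This yields the stated atom set.

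Next I would compute $\mathfrak{X}(H)$. For the prime ideals disjoint from $R$, Corollary~\ref{charemptsect} gives a bijection with $[\![\prod f_i]\!]\cap\mathcal{A}(R[X])\setminus R=\{f_1,\dots,f_n\}$; for each $i$ one computes $f_iK[X]\cap H$ and, using Proposition~\ref{charatoms} to list which atoms lie in it, finds it equals $f_iH\cup\tfrac{\prod_j f_j}{p}H$. For the prime ideals meeting $R$, Remark~\ref{charnonemptsect} says there is exactly one ``family'' (the one containing $p$), and Lemma~\ref{compatible1}.2 together with Remark~\ref{compatible2} reduces the question to the maximal $p$-compatible subsets of $\{f_1,\dots,f_n\}$: using the $d$-formula, $f_i$ is the witness showing $\{f_j\}$ is $p$-compatible exactly when removing the $i$-th factor drops the minimum, so each singleton-complement is realized and the maximal $p$-compatible sets are the $\{f_i\}$ themselves (no pair $\{f_i,f_j\}$, $i\ne j$, is $p$-compatible since the min is then forced to stay put for some choice), giving primes $pH\cup f_iH$. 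Combining, $\mathfrak{X}(H)$ has $2n$ elements as claimed.

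For the divisor-class group: by Theorem~\ref{main1}, $\mathcal{C}_v(H)\cong\mathbb{Z}^r$ with $r=|\{P\in\mathfrak{X}(H):P\cap R\ne\emptyset\}|-|\{uH:u\in\mathcal{A}(H)\cap R\}|=n-1$. For the structure of proper divisor-closed submonoids I would invoke Lemma~\ref{divisorclosed}: these are the $H\setminus\bigcup_{P\in\mathcal{P}}P$ for $\emptyset\ne\mathcal{P}\subseteq\mathfrak{X}(H)$; each such monoid is generated by a sub-collection of the atoms, and one checks directly (or re-applies Theorem~\ref{main1}/Proposition~\ref{chards}) that removing at least one of the $2n$ primes makes the monoid factorial — concretely, any proper divisor-closed submonoid omits $pH\cup f_iH$ or $f_iH\cup\tfrac{\prod f_j}{p}H$ for some $i$, hence omits the atom $f_i$ or forces $d$ to become multiplicative on the generating atoms, so condition~3 of Theorem~\ref{main1} holds. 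For the pairing statement, the relations $f_iH=(\,(pH\cup f_iH)\,(f_iH\cup\tfrac{\prod f_j}{p}H))_v$ (an $f_i$ decomposes into the two height-one primes through it, by Proposition~\ref{products}.1–2 and Lemma~\ref{general}.3) show that for $P=pH\cup f_iH$ and $Q=f_iH\cup\tfrac{\prod f_j}{p}H$ the product $(PQ)_v=f_iH$ is principal. Finally, ${\rm D}(\{[P]:P\in\mathfrak{X}(H)\})\ge n$: in $\mathcal{C}_v(H)\cong\mathbb{Z}^{n-1}$ the classes of the $2n$ height-one primes come in $n$ negated pairs $\{[P_i],-[P_i]\}$ (from $(P_iQ_i)_v$ principal), and the $n$ classes $[P_1],\dots,[P_n]$ satisfy $[P_1]+\cdots+[P_n]=0$ — because $pH=(\prod_{i=1}^n(pH\cup f_iH))_v$ by Lemma~\ref{general}.3 (radical ideal generated by the constant atom $p$) so $\sum_i[P_i]=[pH]=0$ — with no proper sub-sum vanishing (an easy rank/linear-independence check on the $P_i$ images in $\mathbb{Z}^{n-1}$, since any $n-1$ of them form a basis). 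Hence $([P_i])_{i=1}^n$ is a minimal zero-sum sequence of length $n$, giving the bound.

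The main obstacle I anticipate is the case analysis pinning down the $\underline{f}$-irreducible vectors and, in parallel, verifying that no two distinct $f_i,f_j$ form a $p$-compatible pair — both hinge on carefully exploiting the very rigid shape $d(\prod f_i^{r_i})=p^{\min_i r_i}$, and getting the combinatorics of "which coordinate achieves the minimum" exactly right is where the real work lies; everything else is a bookkeeping application of the quoted Section~3–4 results.
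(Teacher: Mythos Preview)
Your plan is essentially the paper's own proof: atoms via Proposition~\ref{charatoms}, height-one primes via Corollary~\ref{charemptsect} and Lemma~\ref{compatible1}, the rank via Theorem~\ref{main1}, the pairing from the decomposition of $f_iH$, and the Davenport bound from the radical decomposition $pH=(\prod_i(pH\cup f_iH))_v$ with $p$ an atom. Two small imprecisions to fix when you write it out: (i) the maximal $p$-compatible subsets must be taken inside the full system $\mathcal S=\{f_1,\dots,f_n,\tfrac{\prod_j f_j}{p}\}$ of non-constant atoms, not just $\{f_1,\dots,f_n\}$ --- you should note that $\tfrac{\prod_j f_j}{p}$ is never part of a $p$-compatible set (since $\tfrac{d(uw)}{d(w)}=1$ for this $u$ and any primitive $w$); and (ii) for the factoriality of the maximal proper divisor-closed submonoids, Theorem~\ref{main1} does not directly apply to $S_i=H\setminus(pH\cup f_iH)$ because its monadic generator need not lie in $R[X]$ --- the paper instead invokes Lemma~\ref{heightonesub} to read off $\mathfrak X(S_i)$ and $\mathfrak X(T_i)$ and observes that every height-one prime there is principal, which is the ``checks directly'' route you allude to.
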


\begin{proof} Set $e_i=(\delta_{i,j})_{j=1}^n$ for each $i\in [1,n]$. Observe that $\{\underline{m}\in\mathbb{N}_0^n\mid\underline{m}$ is $\underline{f}$-irreducible$\}=\{e_i\mid i\in [1,n]\}\cup\{\sum_{i=1}^n e_i\}$. Therefore, Proposition \ref{charatoms} implies that $\{uH\mid u\in\mathcal{A}(H)\}=\{pH,\frac{\prod_{i=1}^n f_i}{p}H\}\cup\{f_iH\mid i\in [1,n]\}$. Note that $\mathcal{R}=\{p\}$ is a system of representatives of $\mathcal{A}(H)\cap R$ and $\mathcal{S}=\{f_i\mid i\in [1,n]\}\cup\{\frac{\prod_{i=1}^n f_i}{p}\}$ is a system of representatives of $\mathcal{A}(H)\setminus R$. It follows by Remark \ref{charnonemptsect}, Lemma \ref{compatible1}.2, and Remark \ref{compatible2} that $\{P\in\mathfrak{X}(H)\mid P\cap R\not=\emptyset\}=\{P\in\mathfrak{X}(H)\mid p\in P\}=\{pH\cup f_iH\mid i\in [1,n]\}$. Moreover, we have $\{P\in\mathfrak{X}(H)\mid P\cap R=\emptyset\}=\{f_iK[X]\cap H\mid i\in [1,n]\}=\{f_iH\cup\frac{\prod_{j=1}^n f_j}{p}H\mid i\in [1,n]\}$ by Corollary \ref{charemptsect}. Consequently, $\mathcal{C}_v(H)\cong\mathbb{Z}^{n-1}$ by Theorem \ref{main1}.
For $i\in [1,n]$, set $S_i=H\setminus (pH\cup f_iH)$ and $T_i=H\setminus (f_iH\cup\frac{\prod_{j=1}^n f_j}{p}H)$. Let $i\in [1,n]$. By Lemma \ref{divisorclosed}, $S_i$ and $T_i$ are divisor-closed submonoids of $H$ and every proper divisor-closed submonoid of $H$ is a divisor-closed submonoid of $S_j$ or $T_j$ for some $j\in [1,n]$. Note that $\mathfrak{X}(S_i)=\{f_kS_i,\frac{\prod_{j=1}^n f_j}{p}S_i\mid k\in [1,n]\setminus\{i\}\}$ and $\mathfrak{X}(T_i)=\{pT_i,f_kT_i\mid k\in [1,n]\setminus\{i\}\}$ by Lemma \ref{heightonesub}. This implies that $S_i$ and $T_i$ are factorial (since all of their height-one prime ideals are principal), and hence every proper divisor-closed submonoid of $H$ is factorial.

It is an easy consequence of Proposition \ref{products}.3 that $f_iH=((pH\cup f_iH)(f_iH\cup\frac{\prod_{j=1}^n f_j}{p}H))_v$ for every $i\in [1,n]$. Let $P\in\mathfrak{X}(H)$. Clearly, there are some $i\in [1,n]$ and $Q\in\mathfrak{X}(H)$ such that $\{P,Q\}=\{pH\cup f_iH,f_iH\cup\frac{\prod_{j=1}^n f_j}{p}H\}$. It follows that $(PQ)_v$ is a principal ideal of $H$.

Note that $pH=(\prod_{i=1}^n pH\cup f_iH)_v$ (since $pH$ is a radical ideal of $H$). Since $p$ is an atom of $H$, it follows that no nonempty $v$-subproduct of $(\prod_{i=1}^n pH\cup f_iH)_v$ is principal. Therefore, ${\rm D}(\{[P]\mid P\in\mathfrak{X}(H)\})\geq n$.
\end{proof}

Next we recall a simple irreducibility criterion similar to Eisenstein's criterion. We include a proof for the sake of completeness. If $R$ is a factorial domain, $K$ is a field of quotients of $R$, $X$ is an indeterminate over $K$, and $f\in R[X]$ with ${\rm deg}(f)=n\in\mathbb{N}_0$, then let $(f_i)_{i=0}^n\in R^{n+1}$ be the unique sequence such that $f=\sum_{i=0}^n f_iX^i$.

\begin{Le}\label{exprep1} Let $R$ be a factorial domain, $K$ a field of quotients of $R$, $X$ an indeterminate over $K$, $p\in\mathcal{A}(R)$, and $f\in R[X]\setminus R$ primitive such that $n={\rm deg}(f)$, $p\nmid_R f_0$, $p^2\nmid_R f_n$, and $p\mid_R f_i$ for all $i\in [1,n]$. Then $f\in\mathcal{A}(R[X])$.
\end{Le}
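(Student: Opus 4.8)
The plan is to verify directly that $f$ is an atom of $R[X]$ by reducing a hypothetical factorization modulo $p$. Since ${\rm deg}(f)=n\geq 1$ we have $f\notin R=R[X]^{\times}$, so it suffices to take $a,b\in R[X]$ with $f=ab$ and to show that one of $a,b$ lies in $R[X]^{\times}=R^{\times}$. Write $a=\sum_{i=0}^s a_iX^i$ and $b=\sum_{j=0}^t b_jX^j$ with $a_s,b_t\in R^{\bullet}$; since $R[X]$ is a domain we get $s+t=n$ and $f_n=a_sb_t$.

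Next I would pass to the residue ring. As $R$ is factorial and $p\in\mathcal{A}(R)$, the element $p$ is a prime element of $R$, so $pR$ is a prime ideal and $R/pR$ is an integral domain; let $\overline{\phantom{x}}\colon R[X]\to (R/pR)[X]$ denote the coefficientwise reduction homomorphism. The hypotheses $p\nmid_R f_0$ and $p\mid_R f_i$ for all $i\in[1,n]$ say precisely that $\overline{f}=\overline{f_0}$ is a nonzero constant. From $\overline{f_0}=\overline{f}=\overline{a}\,\overline{b}$ and the fact that degrees add in the polynomial ring $(R/pR)[X]$ over the domain $R/pR$, both $\overline{a}$ and $\overline{b}$ are nonzero constants; equivalently $p\mid_R a_i$ for every $i\geq 1$ and $p\mid_R b_j$ for every $j\geq 1$.

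Now comes the only place where the hypothesis $p^2\nmid_R f_n$ enters. If we had $s\geq 1$ and $t\geq 1$, then $p\mid_R a_s$ and $p\mid_R b_t$, hence $p^2\mid_R a_sb_t=f_n$, a contradiction. Therefore $s=0$ or $t=0$, and by symmetry we may assume $a=a_0\in R$. Then $a_0$ divides every coefficient of $f$, hence $a_0$ divides every greatest common divisor of the coefficients of $f$; since $f$ is primitive these are units, whence $a_0\in R^{\times}=R[X]^{\times}$. Thus every factorization of $f$ in $R[X]$ has a unit factor, i.e.\ $f\in\mathcal{A}(R[X])$.

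I do not expect a serious obstacle here; the argument is in essence the classical Eisenstein proof applied to the reversed polynomial $X^nf(1/X)$, which is primitive with the same set of coefficients and leading coefficient $f_0$. The one point that needs a little care is that reducing a polynomial factor modulo $p$ may lower its degree, which is exactly why one first deduces from the shape of $\overline{f}$ that $\overline{a}$ and $\overline{b}$ are \emph{constants} and only invokes the primitivity of $f$ at the very end, rather than expecting the degree-zero factor to be a unit for free.
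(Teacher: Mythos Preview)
Your proof is correct. The approach differs from the paper's: you reduce modulo $p$ and use that $(R/pR)[X]$ is an integral domain to conclude in one stroke that both factors $\overline{a},\overline{b}$ are nonzero constants, then invoke $p^2\nmid_R f_n$ to force one of the original factors into $R$. The paper instead works entirely in $R[X]$: after observing that exactly one of the leading coefficients $g_l,h_m$ is divisible by $p$, it runs in each case an explicit downward induction on the coefficients of one factor to show that $p$ divides all of them, contradicting $p\nmid_R f_0$. Your route is shorter and more conceptual, exploiting the degree additivity over a domain; the paper's coefficient-chasing stays inside $R[X]$ and mirrors the classical textbook Eisenstein argument. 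Both arrive at the same endpoint (one factor lies in $R$, hence in $R^{\times}$ by primitivity of $f$).
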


\begin{proof} Clearly, $f\in R[X]^{\bullet}\setminus R[X]^{\times}$. Let $g,h\in R[X]$ be such that $f=gh$. Let $l={\rm deg}(g)$ and $m={\rm deg}(h)$. Without restriction we can assume that $l\leq m$. Observe that $p\mid_R g_lh_m$, $p^2\nmid_R g_lh_m$, and $m\geq 1$. We need to show that $g\in R^{\times}$.

Case 1. $p\mid_R g_l$ and $p\nmid_R h_m$: We prove that $p\mid_R g_{l-i}$ for all $i\in [0,l]$ by induction on $i$. Let $i\in [0,l]$ be such that $p\mid_R g_{l-j}$ for each $j\in [0,i-1]$. It follows that $p\mid_R f_{l+m-i}=\sum_{j=0}^i g_{l-j}h_{m+j-i}$, and hence $p\mid_R g_{l-i}h_m$. Consequently, $p\mid_R g_{l-i}$. We infer that $p\mid_{R[X]} g\mid_{R[X]} f$, and thus $p\mid_R f_0$, a contradiction.

Case 2. $p\nmid_R g_l$ and $p\mid_R h_m$: We prove that $l=0$. Assume to the contrary that $l>0$. We show by induction on $i$ that $p\mid_R h_{m-i}$ for all $i\in [0,m]$. Let $i\in [0,m]$ be such that $p\mid_R h_{m-j}$ for every $j\in [0,i-1]$. Note that $p\mid_R f_{l+m-i}=\sum_{j=0}^{\min\{i,l\}} g_{l-j}h_{m+j-i}$, and thus $p\mid_R g_lh_{m-i}$. We infer that $p\mid_R h_{m-i}$. Consequently, $p\mid_{R[X]} h\mid_{R[X]} f$, and hence $p\mid_R f_0$, a contradiction. It follows that $l=0$, and thus $g\in R$. Since $f$ is primitive we have $g\in R^{\times}$.
\end{proof}

In the beginning of this section we have provided examples of monadic submonoids of ${\rm Int}(\mathbb{Z})$ whose divisor-class group is a torsion group or torsion-free, but not trivial. Next we provide an example of a monadic submonoid of ${\rm Int}(\mathbb{Z})$ whose divisor-class group is neither torsion-free nor a torsion group.

\begin{Ex} Let $X$ be an indeterminate over $\mathbb{Q}$. Set $p_1=7$, $p_2=13$, $p_3=19$, $p_4=31$, $p_5=37$, $p_6=43$, $p_7=67$, $a=\prod_{i=1}^7 p_i$, $f=(aX+1)(aX+2)(aX+3)$, $g=\prod_{i=1}^7 (6Xf+p_i)$, and $H=[\![\frac{(aX+1)(aX+2)^2(aX+3)g}{12}]\!]$. Then $\mathcal{C}_v(H)\cong\mathbb{Z}/2\mathbb{Z}\times\mathbb{Z}^6$.
\end{Ex}

\begin{proof} It is straightforward to show that $d((aX+1)^k(aX+2)^l(aX+3)^m)=2^{\min\{2k+m,l,k+2m\}}3^{\min\{k,l,m\}}$ for all $k,l,m\in\mathbb{N}_0$. As in Example \ref{Ex1} we obtain that $\mathcal{C}_v([\![\frac{(aX+1)(aX+2)^2(aX+3)}{12}]\!])\cong\mathbb{Z}/2\mathbb{Z}$. Moreover, one can show that $d(\prod_{i=1}^7 (aXf+p_i)^{b_i})=p_1^{\min\{b_i\mid i\in [1,7]\}}$ for all $\underline{b}\in\mathbb{N}_0^7$. Therefore, it follows from Lemma \ref{exprep} that $\mathcal{C}_v([\![g]\!])\cong\mathbb{Z}^6$. It is clear that ${\rm GCD}_{\mathbb{Z}[X]}(f,g)={\rm GCD}_{\mathbb{Z}[X]}(f,a)={\rm GCD}_{\mathbb{Z}[X]}(6,a)=\mathbb{Z}[X]^{\times}$. Along the same lines we infer that ${\rm GCD}_{\mathbb{Z}}(f(x),g(x))={\rm GCD}_{\mathbb{Z}}(f(x),a)={\rm GCD}_{\mathbb{Z}}(6,a)=\mathbb{Z}^{\times}$ for each $x\in\mathbb{Z}$. It is clear that $aX+1,aX+2,aX+3\in\mathcal{A}(\mathbb{Z}[X])$. It follows by Lemma \ref{exprep1} that $6Xf+p_i\in\mathcal{A}(\mathbb{Z}[X])$ for every $i\in [1,7]$. Consequently, it is obvious that for all $p\in\mathcal{A}(\mathbb{Z})$ and $h\in\mathcal{A}(\mathbb{Z}[X])$ with $(p\mid_{\mathbb{Z}} f(0)$ and $h\mid_{\mathbb{Z}[X]} g)$ or $(p\mid_{\mathbb{Z}} g(0)$ and $h\mid_{\mathbb{Z}[X]} f)$ it follows that $p\mid_{\mathbb{Z}[X]} h-h(0)$. Since $\frac{(aX+1)(aX+2)^2(aX+3)}{12}\in [\![f]\!]$, we infer by Proposition \ref{twoproduct} and Lemma \ref{preparation3} that $\mathcal{C}_v(H)\cong\mathcal{C}_v([\![\frac{(aX+1)(aX+2)^2(aX+3)}{12}]\!])\times\mathcal{C}_v([\![g]\!])\cong\mathbb{Z}/2\mathbb{Z}\times\mathbb{Z}^6$.
\end{proof}

Now we present a result which enables us to construct examples of monadic submonoids of ${\rm Int}(\mathbb{Z})$ whose divisor-class group is torsion-free with prescribed rank.

\begin{Pro}\label{exconcl} Let $R$ be a factorial domain, $K$ a field of quotients of $R$, $X$ an indeterminate over $K$, $\mathcal{P}$ a system of representatives of $\mathcal{A}(R)$, $n\in\mathbb{N}$, $\underline{a}\in R^n$, and $\underline{p}\in\mathcal{P}^n$ such that for all $i\in [1,n]$, $p_1\mid_R a_i-1$, $a_i+p_kR\in (R/p_kR)^{\times}$ for all $k\in [2,n]$, and if $i>1$, then $n=|\{p_j+p_1R\mid j\in [1,n]\}|=|R/p_1R|<|R/p_iR|$. Set $H=[\![\prod_{i=1}^n (a_iX-p_i)]\!]$. Then $\mathcal{C}_v(H)\cong\mathbb{Z}^{n-1}$, for every $P\in\mathfrak{X}(H)$ there is some $Q\in\mathfrak{X}(H)$ such that $(PQ)_v$ is a principal ideal of $H$, and ${\rm D}(\{[P]\mid P\in\mathfrak{X}(H)\})\geq n$.
\end{Pro}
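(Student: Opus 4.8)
The plan is to reduce everything to Lemma \ref{exprep} by taking $f_i=a_iX-p_i$ and $p=p_1$. The degenerate case $n=1$ is handled separately: there $H=[\![a_1X-p_1]\!]$ with $a_1X-p_1$ a primitive linear (hence irreducible) polynomial whose image-content $d(a_1X-p_1)$ is a unit, so $H_{\mathrm{red}}\cong(\mathbb{N}_0,+)$ is free and all three assertions are immediate ($\mathcal{C}_v(H)=0=\mathbb{Z}^0$, the unique $P=(a_1X-p_1)H\in\mathfrak{X}(H)$ satisfies $(P^2)_v=(a_1X-p_1)^2H$, and ${\rm D}(\{0\})=1=n$). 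From now on assume $n\geq 2$.

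First I would verify the structural hypotheses of Lemma \ref{exprep}. Since $p_1\mid_R a_i-1$ and $p_1\in\mathcal{A}(R)$, no $a_i$ is zero, so $f_i:=a_iX-p_i$ has degree $1$; moreover each $f_i$ is primitive (for $i=1$ a common prime divisor of $a_1$ and $p_1$ would force $p_1\mid_R 1$; for $i\geq 2$ the hypothesis with $k=i$ gives that $a_i$ is a unit modulo $p_i$, so $p_i\nmid_R a_i$), hence $f_i\in\mathcal{A}(R[X])\setminus R$ as a primitive polynomial of degree $1$. For pairwise non-association: if $f_i=\varepsilon f_j$ with $\varepsilon\in R[X]^{\times}=R^{\times}$, then $p_i=\varepsilon p_j$, and distinct elements of $\mathcal{P}$ are non-associated, so $p_i=p_j$; but the hypothesis $|\{p_j+p_1R\mid j\in[1,n]\}|=n$ forces the $p_i$ to be pairwise distinct. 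Thus $\underline{f}$ is a sequence of pairwise non-associated elements of $\mathcal{A}(R[X])\setminus R$.

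The heart of the argument is to show $d(\prod_{i=1}^n f_i^{r_i})=p_1^{\min\{r_i\mid i\in[1,n]\}}$ for all $\underline{r}\in\mathbb{N}_0^n$. Write $g=\prod_i f_i^{r_i}$ and compute $\min\{\mathrm{v}_q(g(c))\mid c\in R\}$ for each $q\in\mathcal{P}$. Since $a_i\equiv 1\pmod{p_1}$, we get $f_i(c)\equiv c-p_i\pmod{p_1}$ for all $i$, so $p_1\mid_R f_i(c)$ if and only if $c\equiv p_i\pmod{p_1}$; because $p_1+p_1R,\dots,p_n+p_1R$ are the $n=|R/p_1R|$ distinct elements of $R/p_1R$, every $c$ is congruent mod $p_1$ to exactly one $p_{j(c)}$, whence $\mathrm{v}_{p_1}(g(c))=r_{j(c)}\mathrm{v}_{p_1}(f_{j(c)}(c))\geq\min_i r_i$. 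For the reverse inequality pick $j_0$ with $r_{j_0}=\min_i r_i$ and set $c=p_{j_0}+tp_1$; writing $a_{j_0}-1=p_1s$ one computes $f_{j_0}(c)=p_1(p_{j_0}s+a_{j_0}t)$, and since $a_{j_0}$ is invertible mod $p_1$ one may choose $t$ with $p_1\nmid_R p_{j_0}s+a_{j_0}t$, giving $\mathrm{v}_{p_1}(f_{j_0}(c))=1$ and hence $\mathrm{v}_{p_1}(g(c))=r_{j_0}$. So $\mathrm{v}_{p_1}(d(g))=\min_i r_i$. It remains to see $q\nmid_R d(g)$ for $q\in\mathcal{P}\setminus\{p_1\}$. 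If $q=p_k$ with $k\in[2,n]$: each $a_i$ is a unit mod $p_k$, so $p_k\mid_R f_i(c)$ only when $c\equiv a_i^{-1}p_i\pmod{p_k}$, forbidding at most $n$ residues; as $|R/p_kR|>|R/p_1R|=n$, some residue is allowed, yielding $c$ with $q\nmid_R g(c)$. If $q\notin\{p_1,\dots,p_n\}$: then $q\nmid_R p_i$ for all $i$ (distinct primes of $\mathcal{P}$), so $g(0)=\prod_i(-p_i)^{r_i}$ is not divisible by $q$. In both cases $\min\{\mathrm{v}_q(g(c))\mid c\in R\}=0$, so $d(g)=p_1^{\min_i r_i}$.

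With these facts Lemma \ref{exprep}, applied with $p=p_1$, yields at once $\mathcal{C}_v(H)\cong\mathbb{Z}^{n-1}$, the existence for each $P\in\mathfrak{X}(H)$ of some $Q\in\mathfrak{X}(H)$ with $(PQ)_v$ principal, and ${\rm D}(\{[P]\mid P\in\mathfrak{X}(H)\})\geq n$. I expect the only genuine work to be the image-content computation: the prime $p_1$ requires the lifting $c=p_{j_0}+tp_1$ together with the hypothesis that the $p_i$ exhaust $R/p_1R$, and the primes $p_k$ with $k\geq 2$ are exactly where the cardinality inequality $|R/p_1R|<|R/p_kR|$ is used to dodge the at most $n$ forbidden residues; the rest is bookkeeping and an appeal to Lemma \ref{exprep}.
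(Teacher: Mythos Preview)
Your proof is correct and follows essentially the same route as the paper: reduce to Lemma \ref{exprep} with $p=p_1$ and $f_i=a_iX-p_i$ by verifying the image-content identity $d(\prod_i f_i^{r_i})=p_1^{\min_i r_i}$, then read off all three conclusions. Your case analysis for the valuation computation matches the paper's three cases (the prime $p_1$, the primes $p_k$ with $k\geq 2$, and all other primes); your ``forbidden residues'' count for $q=p_k$ is in fact a bit cleaner than the paper's phrasing, and your evaluation at $0$ for $q\notin\{p_1,\dots,p_n\}$ works just as well as the paper's evaluation at $q$. You are also more careful than the paper in two respects: you explicitly verify that the $f_i$ are primitive and pairwise non-associated (hence satisfy the hypotheses of Lemma \ref{exprep}), and you treat $n=1$ separately, which is necessary since Lemma \ref{exprep} is stated only for $n\geq 2$ and the identity $d(f_1^{r_1})=p_1^{r_1}$ actually fails in that case.
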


\begin{proof} By Lemma \ref{exprep} it is sufficient to show that $d(\prod_{i=1}^n (a_iX-p_i)^{r_i})=p_1^{\min\{r_i\mid i\in [1,n]\}}$ for all $\underline{r}\in\mathbb{N}_0^n$. Let $\underline{r}\in\mathbb{N}_0^n$ and $q\in\mathcal{P}$. We need to show that $\min\{\sum_{i=1}^n r_i\mathrm{v}_q(a_ix-p_i)\mid x\in R\}=\delta_{q,p_1}\min\{r_i\mid i\in [1,n]\}$.

Case 1. $q\not=p_i$ for all $i\in [1,n]$: Observe that $\sum_{i=1}^n r_i\mathrm{v}_q(a_iq-p_i)=0$. Therefore, $\min\{\sum_{i=1}^n r_i\mathrm{v}_q(a_ix-p_i)\mid x\in R\}=0$.

Case 2. $q=p_j$ for some $j\in [2,n]$: Since $n<|R/qR|$, there is some $y\in R$ such that $q\nmid_R y-p_k$ for all $k\in [1,n]$. Let $k\in [1,n]$. Since $a_k+qR\in (R/qR)^{\times}$, there is some $z\in R$ such that $q\mid_R a_kz-y$. Consequently, $q\nmid_R a_kz-p_k$. It follows that $\sum_{i=1}^n r_i\mathrm{v}_q(a_iz-p_i)=0$. This implies that $\min\{\sum_{i=1}^n r_i\mathrm{v}_q(a_ix-p_i)\mid x\in R\}=0$.

Case 3. $q=p_1$: Let $y\in R$. Since $R/qR=\{p_i+qR\mid i\in [1,n]\}$, there is some $j\in [1,n]$ such that $q\mid_R y-p_j$. Since $q\mid_R a_j-1$, we infer that $q\mid_R a_jy-p_j$. Therefore, $\min\{r_i\mid i\in [1,n]\}\leq r_j\leq\sum_{i=1}^n r_i\mathrm{v}_q(a_iy-p_i)$, hence $\min\{r_i\mid i\in [1,n]\}\leq\min\{\sum_{i=1}^n r_i\mathrm{v}_q(a_ix-p_i)\mid x\in R\}$.

There is some $k\in [1,n]$ such that $\min\{r_i\mid i\in [1,n]\}=r_k$. We show that $\mathrm{v}_q(a_kz-p_k)=1$ for some $z\in R$. Since $q\mid_R a_k-1$, there is some $y\in R$ such that $q\mid_R a_ky-p_k$. If $q^2\nmid_R a_ky-p_k$, then set $z=y$. Now suppose that $q^2\mid_R a_ky-p_k$. Set $z=q+y$. Then $\mathrm{v}_q(a_kz-p_k)=1$.

Let $j\in [1,n]\setminus\{k\}$. Then $q\nmid_R p_j-p_k$. Since $q\mid_R a_jz-z$, we infer that $\mathrm{v}_q(a_jz-p_j)=0$. Consequently, $\min\{\sum_{i=1}^n r_i\mathrm{v}_q(a_ix-p_i)\mid x\in R\}\leq\sum_{i=1}^n r_i\mathrm{v}_q(a_iz-p_i)=r_k=\min\{r_i\mid i\in [1,n]\}$.
\end{proof}

The following result is a useful application of Corollary \ref{twoproduct2}.

\begin{Pro}\label{exconcl2} Let $R$ be a factorial domain, $K$ a field of quotients of $R$, $X$ an indeterminate over $K$, and $\mathcal{P}$ a system of representatives of $\mathcal{A}(R)$. Let $k\in\mathbb{N}$, and $(\mathcal{P}_i)_{i=1}^k$ a finite sequence of finite and pairwise disjoint subsets of $\mathcal{P}$ such that for every $i\in [1,k]$ there is some $p\in\mathcal{P}_i$ for which $|\mathcal{P}_i|=|\{r+pR\mid r\in\mathcal{P}_i\}|=|R/pR|<|R/qR|<\infty$ for all $q\in\mathcal{P}_i\setminus\{p\}$, and $p\mid_R\prod_{a\in\bigcup_{j=1,j\not=i}^k\mathcal{P}_j} a-1$. Set
\[
g=\prod_{j=1}^k\Big(\prod_{b\in\mathcal{P}_j}\Big(\Big(\prod_{a\in\bigcup_{i=1,i\not=j}^k\mathcal{P}_i} a\Big)X-b\Big)\Big).
\]
Then $\mathcal{C}_v([\![g]\!])\cong\mathbb{Z}^{\sum_{i=1}^k |\mathcal{P}_i|-k}$.
\end{Pro}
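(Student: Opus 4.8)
The plan is to reduce the statement to repeated applications of Corollary \ref{twoproduct2} together with Proposition \ref{exconcl} (or rather its key content, Lemma \ref{exprep}), by ``peeling off'' one factor $g_j=\prod_{b\in\mathcal{P}_j}\bigl(\bigl(\prod_{a\in\bigcup_{i\not=j}\mathcal{P}_i}a\bigr)X-b\bigr)$ at a time. First I would set $g_j$ as just defined, so that $g=\prod_{j=1}^k g_j$, and I would verify the hypotheses needed to split $[\![g_j\cdots g_k]\!]$ as $[\![g_j]\!]$ times $[\![g_{j+1}\cdots g_k]\!]$ via Corollary \ref{twoproduct2}. The point $a\in R$ required by that corollary will be taken to be $0$: one checks that $g_j(0)=\pm\prod_{b\in\mathcal{P}_j}b$ (up to sign and units), so that the distinct factors $g_j$ are pairwise coprime in $R[X]$ and in $R$ at the argument $0$ because the $\mathcal{P}_i$ are pairwise disjoint sets of non-associated atoms of $R$. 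The Eisenstein-type criterion Lemma \ref{exprep1} shows each linear factor $\bigl(\prod_{a\in\bigcup_{i\not=j}\mathcal{P}_i}a\bigr)X-b$ is an atom of $R[X]$ (indeed it is linear with coprime coefficients), hence $g_j\in R[X]^{\bullet}$ is a product of non-associated atoms, and the divisibility condition ``$p\mid_{R[X]}h-h(0)$'' in Corollary \ref{twoproduct2} holds trivially for linear atoms $h$ with the prescribed structure, using the hypothesis $p\mid_R\prod_{a\in\bigcup_{i\not=j}\mathcal{P}_i}a-1$ to handle the constant terms.

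Next I would run the induction: $\mathcal{C}_v([\![g]\!])\cong\mathcal{C}_v([\![g_1]\!])\times\mathcal{C}_v([\![g_2\cdots g_k]\!])\cong\cdots\cong\prod_{j=1}^k\mathcal{C}_v([\![g_j]\!])$. To finish I would compute each $\mathcal{C}_v([\![g_j]\!])$. Here I would fix $j$, let $p$ be the distinguished prime of $\mathcal{P}_j$ from the hypothesis, and argue exactly as in Proposition \ref{exconcl}: the key claim is that $d\bigl(\prod_{b\in\mathcal{P}_j}(\cdots X-b)^{r_b}\bigr)=p^{\min\{r_b\mid b\in\mathcal{P}_j\}}$ for all exponent vectors $\underline{r}$. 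This is proved by the same three-case valuation argument (evaluate at primes $q\neq$ any element of $\mathcal{P}_j$; at $q\in\mathcal{P}_j\setminus\{p\}$ use $|\mathcal{P}_j|<|R/qR|$ to find an argument avoiding all the $-b$; at $q=p$ use that $R/pR$ is exactly indexed by $\mathcal{P}_j\bmod p$ and that the leading coefficient is $\equiv 1\bmod p$). With this image-content formula in hand, Lemma \ref{exprep} applies with $n=|\mathcal{P}_j|$ and gives $\mathcal{C}_v([\![g_j]\!])\cong\mathbb{Z}^{|\mathcal{P}_j|-1}$.

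Assembling, $\mathcal{C}_v([\![g]\!])\cong\prod_{j=1}^k\mathbb{Z}^{|\mathcal{P}_j|-1}\cong\mathbb{Z}^{\sum_{j=1}^k|\mathcal{P}_j|-k}$, which is the assertion.

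The main obstacle I anticipate is the bookkeeping in verifying the hypotheses of Corollary \ref{twoproduct2} at each inductive step: one must check simultaneously that $g_j$ and the ``tail'' $\prod_{i>j}g_i$ are coprime in $R[X]$, that their values at $0$ are coprime in $R$, and that the compatibility divisibility condition on atoms holds for atoms coming from either factor. The coprimality is essentially combinatorial (disjointness of the $\mathcal{P}_i$), but one needs the leading coefficient of each linear factor in $g_j$, namely $\prod_{a\in\bigcup_{i\not=j}\mathcal{P}_i}a$, to be a unit modulo each prime of $\mathcal{P}_j$ — this is where the disjointness and the atom condition $a_i+p_kR\in(R/p_kR)^\times$ style hypotheses are used — and one must confirm the constant-term congruence $p\mid_R\prod_{a\in\bigcup_{i\not=j}\mathcal{P}_i}a-1$ gives exactly ``$p\mid_{R[X]}h-h(0)$'' for the relevant $h$. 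None of these steps is deep, but they require care to state cleanly; everything else reduces to the already-established Corollary \ref{twoproduct2}, Lemma \ref{exprep}, and Lemma \ref{exprep1}.
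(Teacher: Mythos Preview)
Your approach is essentially the paper's: induct by splitting off one $g_j$ at a time via Corollary~\ref{twoproduct2} at the point $a=0$, then compute each $\mathcal{C}_v([\![g_j]\!])\cong\mathbb{Z}^{|\mathcal{P}_j|-1}$ via Proposition~\ref{exconcl} (equivalently, via the image-content formula and Lemma~\ref{exprep}). One small correction: the hypothesis $p\mid_R\prod_{a\in\bigcup_{i\neq j}\mathcal{P}_i}a-1$ is \emph{not} what makes the condition $p\mid_{R[X]}h-h(0)$ of Corollary~\ref{twoproduct2} hold---that follows simply because the relevant prime $p$ lies in some $\mathcal{P}_l$ with $l\neq j$ and hence divides the leading coefficient $\prod_{a\in\bigcup_{i\neq j}\mathcal{P}_i}a$ of $h$---rather, that hypothesis is needed exactly where you use it later, in the Proposition~\ref{exconcl}/Lemma~\ref{exprep} step, to guarantee the leading coefficient is $\equiv 1\pmod p$.
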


\begin{proof} For $j\in [1,k]$, set $f_j=\prod_{b\in\mathcal{P}_j} ((\prod_{a\in\bigcup_{i=1,i\not=j}^k\mathcal{P}_i} a)X-b)$. It is sufficient to show by induction on $j$ that $\mathcal{C}_v([\![\prod_{i=1}^j f_i]\!])\cong\mathbb{Z}^{\sum_{i=1}^j |\mathcal{P}_i|-j}$ for every $j\in [1,k]$. It follows immediately from Proposition \ref{exconcl} that $\mathcal{C}_v([\![f_j]\!])\cong\mathbb{Z}^{|\mathcal{P}_j|-1}$ for every $j\in [1,k]$. Let $j\in [2,k]$. Set $f=\prod_{i=1}^{j-1} f_i$ and $g^{\prime}=f_j$. Note that ${\rm GCD}_{R[X]}(f,g^{\prime})=R[X]^{\times}$. We show that ${\rm GCD}_R(f(0),g^{\prime}(0))=R^{\times}$ and for all $p\in\mathcal{A}(R)$ and $h\in\mathcal{A}(R[X])$ such that ($p\mid_R f(0)$ and $h\mid_{R[X]} g^{\prime})$ or ($p\mid_R g^{\prime}(0)$ and $h\mid_{R[X]} f)$ we have $p\mid_{R[X]} h-h(0)$. Observe that $f(0)\simeq_R\prod_{b\in\bigcup_{i=1}^{j-1}\mathcal{P}_i} b$ and $g^{\prime}(0)\simeq_R\prod_{b\in\mathcal{P}_j} b$. Since $\bigcup_{i=1}^{j-1}\mathcal{P}_i$ and $\mathcal{P}_j$ are disjoint, it follows that ${\rm GCD}_R(f(0),g^{\prime}(0))=R^{\times}$. Let $p\in\mathcal{A}(R)$ and $h\in\mathcal{A}(R[X])$.

Case 1. $p\mid_R f(0)$ and $h\mid_{R[X]} g^{\prime}$: Of course, $p\simeq_R b$ for some $b\in\bigcup_{i=1,i\not=j}^k\mathcal{P}_i$ and $h\simeq_{R[X]}\linebreak (\prod_{a\in\bigcup_{i=1,i\not=j}^k\mathcal{P}_i} a)X-c$ for some $c\in\mathcal{P}_j$. Therefore, $p\simeq_{R[X]} b\mid_{R[X]} (\prod_{a\in\bigcup_{i=1,i\not=j}^k\mathcal{P}_i} a)X\simeq_{R[X]} h-h(0)$.

Case 2. $p\mid_R g^{\prime}(0)$ and $h\mid_{R[X]} f$: It is clear that $p\simeq_R b$ for some $b\in\mathcal{P}_j$ and $h\simeq_{R[X]} (\prod_{a\in\bigcup_{i=1,i\not=l}^k\mathcal{P}_i} a)X-c$ for some $l\in [1,j-1]$ and some $c\in\mathcal{P}_l$. Consequently, $p\simeq_{R[X]} b\mid_{R[X]} (\prod_{a\in\bigcup_{i=1,i\not=l}^k\mathcal{P}_i} a)X\simeq_{R[X]} h-h(0)$.

We infer by Corollary \ref{twoproduct2} that $\mathcal{C}_v([\![fg^{\prime}]\!])\cong\mathcal{C}_v([\![f]\!])\times\mathcal{C}_v([\![g^{\prime}]\!])\cong\mathbb{Z}^{\sum_{i=1}^{j-1} |\mathcal{P}_i|-j+1}\times\mathbb{Z}^{|\mathcal{P}_j|-1}\cong\mathbb{Z}^{\sum_{i=1}^j |\mathcal{P}_i|-j}$.
\end{proof}

\begin{Ex} Let $g=(95095X+2)(95095X+3)(6X+5)(6X+7)(6X+11)(6X+13)(6X+19)\in\mathbb{Z}[X]$. Then $\mathcal{C}_v([\![g]\!])\cong\mathbb{Z}^5$.
\end{Ex}

\begin{proof} This follows from Proposition \ref{exconcl2} with $k=2$, $\mathcal{P}_1=\{2,3\}$, and $\mathcal{P}_2=\{5,7,11,13,19\}$.
\end{proof}

There are many important invariants which can describe the structure of factorizations. Two of them that are commonly used are the elasticity $\rho(H)$ and the tame degree ${\rm t}(H)$ of a monoid $H$. For the definitions of the elasticity and the tame degree we refer to \cite[Definitions 1.4.1 and 1.6.4]{GHK}. In what follows we want to provide a class of rings of integer-valued polynomials over factorial domains where both of these invariants are infinite. Note that if $H$ is an atomic monoid and $T\subseteq H$ is an atomic divisor-closed submonoid, then $\rho(T)\leq\rho(H)$ and ${\rm t}(T)\leq {\rm t}(H)$.

\begin{Th}\label{main2} Let $R$ be a factorial domain, $K$ a field of quotients of $R$, $X$ an indeterminate over $K$ and $\mathcal{P}$ a system of representatives of $\mathcal{A}(R)$. Let $(\mathcal{P}_i)_{i\in\mathbb{N}}$ be a sequence of finite subsets of $\mathcal{P}$ such that for every $i\in\mathbb{N}$ there is some $p\in\mathcal{P}_i$ for which $i<|\mathcal{P}_i|=|\{r+pR\mid r\in\mathcal{P}_i\}|=|R/pR|<|R/qR|<\infty$ for all $q\in\mathcal{P}_i\setminus\{p\}$. Then $\rho({\rm Int}(R))={\rm t}({\rm Int}(R))=\infty$.
\end{Th}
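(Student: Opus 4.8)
The plan is to exhibit, for each $n \in \mathbb{N}$, a monadic submonoid of ${\rm Int}(R)$ whose divisor-class group has large rank, and then transfer known lower bounds for the elasticity and tame degree of Krull monoids with large class group to ${\rm Int}(R)$ via the divisor-closed inclusion. The key point is that Proposition \ref{exconcl2} (together with Proposition \ref{exconcl}) already gives us, for any finite collection of the $\mathcal{P}_i$, a polynomial $g \in R[X]^{\bullet}$ with $\mathcal{C}_v([\![g]\!]) \cong \mathbb{Z}^{\sum |\mathcal{P}_i| - k}$, and moreover (from Lemma \ref{exprep}, invoked inside Proposition \ref{exconcl}) the stronger structural information that ${\rm D}(\{[P] \mid P \in \mathfrak{X}([\![g]\!])\}) \geq |\mathcal{P}_i|$ for the relevant factor, with every class containing a height-one prime.

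First I would fix $n$ and, using the hypothesis, pick a single index $i$ with $|\mathcal{P}_i| > n$ and form the polynomial $g_i = \prod_{b \in \mathcal{P}_i}\big(\big(\prod_{a \in \bigcup_{j \neq i}\mathcal{P}_j} a\big) X - b\big)$ --- but since we only need one block it is cleaner to apply Proposition \ref{exconcl} directly with the data coming from $\mathcal{P}_i$: one gets a monadic submonoid $H_i = [\![\prod_{b}(a_b X - p_b)]\!]$ with $\mathcal{C}_v(H_i) \cong \mathbb{Z}^{|\mathcal{P}_i|-1}$, with every divisor class represented by a height-one prime ideal, and with ${\rm D}(\{[P] \mid P \in \mathfrak{X}(H_i)\}) \geq |\mathcal{P}_i| > n$. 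Since $H_i$ is a Krull monoid whose class group is free of rank $\geq n$ and every class contains a prime divisor, standard facts from factorization theory (see \cite[Section 1.4 and 1.6]{GHK}, e.g. the behaviour of $\rho$ and ${\rm t}$ for Krull monoids with infinite class group) give $\rho(H_i) \geq c_1(n)$ and ${\rm t}(H_i) \geq c_2(n)$ for suitable quantities tending to infinity with $n$; one concrete route is to use that $\rho(H) = \infty$ and ${\rm t}(H) = \infty$ whenever the class group is infinite and every class contains a prime divisor, but here each $H_i$ has finite rank, so I would instead extract an explicit lower bound growing in $|\mathcal{P}_i|$ --- for instance the half-factorial obstruction coming from $\mathbb{Z}^{|\mathcal{P}_i|-1}$, or the bound $\rho(H_i) \geq \tfrac{1}{2}{\rm D}(\{[P]\mid P\in\mathfrak{X}(H_i)\}) \geq \tfrac{n+1}{2}$ and a comparable estimate for ${\rm t}(H_i)$.

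Next I would note that each $H_i = [\![f_i]\!]$ with $f_i \in {\rm Int}(R)^{\bullet}$ is a divisor-closed (hence atomic) submonoid of ${\rm Int}(R)^{\bullet}$, so by the remark preceding the theorem we have $\rho(H_i) \leq \rho({\rm Int}(R))$ and ${\rm t}(H_i) \leq {\rm t}({\rm Int}(R))$. Combining this with the lower bounds from the previous step and letting $n \to \infty$ (equivalently, $i \to \infty$, using that the $|\mathcal{P}_i|$ are unbounded because $i < |\mathcal{P}_i|$) yields $\rho({\rm Int}(R)) = \infty$ and ${\rm t}({\rm Int}(R)) = \infty$.

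The main obstacle I anticipate is the second step: pinning down a clean, citable lower bound for $\rho$ and especially for ${\rm t}$ of a Krull monoid in terms of the Davenport constant (or rank) of its class group when every class contains a prime divisor. For the elasticity this is classical (for a Krull monoid $H$ with class group $G$ in which every class contains a prime divisor, $\rho(H) = \tfrac{1}{2}{\rm D}(G)$ when $G$ is finite, and $\rho(H) = \infty$ when $G$ is infinite), so the estimate $\rho(H_i) \geq \tfrac{|\mathcal{P}_i|}{2}$ is immediate from ${\rm D} \geq |\mathcal{P}_i|$. For the tame degree one uses that ${\rm t}(H) \geq {\rm t}(B(G_0)) $ for the associated block monoid over the set $G_0$ of classes containing primes, and that ${\rm t}$ of a block monoid grows at least linearly in the Davenport constant of the subgroup generated by $G_0$; I would locate the precise statement in \cite{GHK} (the tame-degree estimates for Krull monoids, around \cite[Section 3.4]{GHK}) and cite it. Everything else is bookkeeping: checking that the hypotheses of Proposition \ref{exconcl} are met by the data attached to a single $\mathcal{P}_i$, and that $|\mathcal{P}_i| \to \infty$.
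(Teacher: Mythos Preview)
Your overall strategy matches the paper's: build monadic submonoids $H_i$ via Proposition~\ref{exconcl}, extract a Davenport-constant lower bound, and push the resulting bounds on $\rho(H_i)$ and ${\rm t}(H_i)$ up to ${\rm Int}(R)$ via divisor-closedness. The paper simply takes $a_b=1$ for all $b$, i.e.\ $H_i=[\![\prod_{a\in\mathcal{P}_i}(X-a)]\!]$, and then cites a single result, \cite[Theorem~3.4.10]{GHK}, to get $\rho(H_i)\geq {\rm D}(G_0)/2$ and ${\rm t}(H_i)\geq {\rm D}(G_0)$ simultaneously, where $G_0=\{[P]\mid P\in\mathfrak{X}(H_i)\}$.

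There is, however, a genuine gap in your argument. You repeatedly assert that ``every divisor class is represented by a height-one prime ideal'', and you try to invoke results of the form ``$\rho(H)=\infty$ when the class group is infinite and every class contains a prime divisor''. This hypothesis is false here: $\mathcal{C}_v(H_i)\cong\mathbb{Z}^{|\mathcal{P}_i|-1}$ is infinite, while $\mathfrak{X}(H_i)$ is finite, so only finitely many classes contain a prime. Consequently the shortcut ``infinite class group $\Rightarrow$ infinite elasticity'' is unavailable, and the bound $\rho(H_i)\geq\tfrac12{\rm D}(G_0)$ is not automatic either: for a general $G_0$ one only has $\rho(\mathcal{B}(G_0))\leq{\rm D}(G_0)$, and the lower bound $\rho\geq{\rm D}(G_0)/2$ (as well as the tame-degree bound) in \cite[Theorem~3.4.10]{GHK} requires the symmetry condition $G_0=-G_0$. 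That symmetry is exactly the content of the clause in Proposition~\ref{exconcl} that you did not use: ``for every $P\in\mathfrak{X}(H)$ there is some $Q\in\mathfrak{X}(H)$ such that $(PQ)_v$ is principal'', which says precisely that $-[P]\in G_0$ for every $P$. Once you invoke this, both the elasticity and the tame-degree bounds follow from the one citation, and your uncertainty about a separate tame-degree estimate disappears.
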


\begin{proof} For $i\in\mathbb{N}$ set $H_i=[\![\prod_{a\in\mathcal{P}_i} (X-a)]\!]$. By Proposition \ref{exconcl} we infer that $\{[P]\mid P\in\mathfrak{X}(H_i)\}=\{[P^{-1}]\mid P\in\mathfrak{X}(H_i)\}$ and ${\rm D}(\{[P]\mid P\in\mathfrak{X}(H_i)\})>i$ for all $i\in\mathbb{N}$. It follows from \cite[Theorem 3.4.10]{GHK} that $\rho({\rm Int}(R))\geq\rho(H_i)\geq {\rm D}(\{[P]\mid P\in\mathfrak{X}(H_i)\})/2>i/2$, and ${\rm t}({\rm Int}(R))\geq {\rm t}(H_i)\geq {\rm D}(\{[P]\mid P\in\mathfrak{X}(H_i)\})> i$ for every $i\in\mathbb{N}$. This implies that $\rho({\rm Int}(R))={\rm t}({\rm Int}(R))=\infty$.
\end{proof}

\bigskip
\textbf{Acknowledgement.} This work was supported by the Austrian Science Fund FWF, Project number P26036-N26.

\end{document}